\newcommand{\exampleend}{\hfill $\boxplus$}
\newcommand{\remarkend}{\hfill $\oplus$}
\newcommandx{\conditiondhsumpaper}[2]{\ensuremath{\mathcal{S}(#1,#2)}} % condition for
\newcommandx\tedclustersl[2][1=n,2=\dhinterseq]{{\widetilde{\boldsymbol\mu}}^*_{#1,#2}} % cluster empirical
\newcommand\TEDclusterrandomsl{\widehat{\boldsymbol\mu}^*} % TED for cluster functionals blocks estimator random levels
\newcommand\TEDclusterrandomaltsl{\widehat{\widehat{\boldsymbol\mu}}^*}
\newcommand\TEPunivsl{\widetilde{\mathbb{T}}}
\newcommand\statinterseqn{k_n}
\newtheorem{theorem}{Theorem}[section]
\newtheorem{proposition}[theorem]{Proposition}
\newtheorem{lemma}[theorem]{Lemma}
\newtheorem{definition}[theorem]{Definition}
\theoremstyle{remark}
\newtheorem{example}[theorem]{Example}
\newtheorem{remark}[theorem]{Remark}
\crefname{hypothesis}{assumption}{assumptions}
\crefname{lemma}{Lemma}{Lemmas}
\crefname{theorem}{Theorem}{Theorems}
\crefname{proposition}{Proposition}{Propositions}
\crefname{exercise}{Exercise}{Exercises}
\crefname{corollary}{Corollary}{Corollaries}
\numberwithin{equation}{section}
\newcommandx\TEP[1][1=]{\mathbb{G}^{#1}} % Univariate TEP
\newcommand\TEPuniv{\widetilde{\mathbb{T}}}
\newcommand\tepcluster{{\mathbb{G}}}
\newcommandx\tedcluster[2][1=n,2=\dhinterseq]{{\widetilde{\boldsymbol\nu}}^*_{#1,#2}} % cluster empirical measure
\newcommandx\tedclusterindep[2][1=n,2=\dhinterseq]{{\widetilde{\boldsymbol\nu}}^\indep_{#1,#2}} % TED for cluster functionals block estimator, independent blocks
\newcommandx\tedclustermdep[2][1=n,2=\dhinterseq]{{\widetilde{\boldsymbol\nu}}^{*(m)}_{#1,#2}} % TED for cluster functionals block estimator m-dep approximation - using \tedcluster with ${\cdot}^{(m)}$ creates a problem
\newcommand\TEDclusterrandom{\widehat{\boldsymbol\nu}^*} % TED for cluster functionals blocks estimator random levels
\newcommandx\tedclusterrandom[2][1=n,2=\dhinterseq]{{\widehat{\boldsymbol\nu}}^*_{#1,#2}}
\newcommand{\ANSJB}{{\rm ANSJB}}
\newcommand\constant{\mathrm{cst}}
\newcommandx\envelope[1][1={\mathbf H}]{{\mathbf {#1}}}
\newcommand\canditheta{{\vartheta}}
\newcommandx\stoploss[1][1={\rm stoploss}]{\theta_{#1}}
\newcommandx\stoplossest[1][1={{\rm stoploss},n}]{\widetilde\theta_{#1}}
\newcommandx\stoplossestk[1][1={{\rm stoploss},n,\statinterseq}]{\widehat\theta_{#1}}
\newcommandx\largedeviation[1][1={\rm largedev}]{\theta_{#1}}
\newcommandx\largedeviationest[1][1={{\rm largedev},n}]{\widetilde\theta_{#1}}
\newcommandx\largedeviationestk[1][1={{\rm largedev},n,\statinterseq}]{\widehat\theta_{#1}}
\newcommandx\ruin[1][1={\rm ruin}]{\theta_{#1}}
\newcommandx\ruinest[1][1={{\rm ruin},n}]{\widetilde\theta_{#1}}
\newcommandx\ruinestk[1][1={{\rm ruin},n,\statinterseq}]{\widehat\theta_{#1}}
\newcommandx\clfunc[1][1=h]{#1} % notation for cluster functionals
\newcommandx\anticl[2][2=\dhinterseq]{\conditionS(#1,#2)}
\newcommandx\anticlpsi[3][1=\tepseq,2=\dhinterseq,3=\psi]{\conditionS(#1,#2,#3)}
\newcommandx{\norm}[2][1=]{\left|#2\right|_{#1}} %%%% arbitrary norm on a euclidean space
\newcommandx{\lpnorm}[3][1=,3=]{\|#2\|_{#1}^{#3}}
\newcommandx{\matrixnorm}[2][2=]{\left\|#1\right\|_{#2}} %%%% matrix norm
\newcommandx{\matrixnormseries}[3][2=,3=]{\left\|#1\right\|_{#2,#3}} %%%% matrix norm
\newcommandx{\linftynorm}[2][1=]{\|#2\|_{#1}} %%%% supnorm in $\ellinfty$ of $A$
\newcommandx{\anynorm}[2][2=]{\left|#1\right|_{#2}} %%%% supnorm
\newcommandx{\supnormclass}[3][3=]{\left\|#1\right\|_{#2}^{#3}} %%%% supnorm on a function class (for entropy)
\newcommandx{\sphere}[2][1=]{\mathbb{S}_{#1}^{#2}}
\newcommandx{\oball}[3][3=]{B_{#3}(#1,#2)} %%%%% open ball with center #1 and radius #2  ; #3 is the norm
\newcommandx{\cball}[3][3=]{\overline{B}_{#3}(#1,#2)} %%%%% closed ball with center #1 and radius #2  ; #3 is the norm
\newcommandx{\coball}[3][3=]{B_{#3}^c(#1,#2)} %%%%% complement of the open ball with center #1 and radius #2
\newcommandx{\ccball}[3][3=]{\overline{B}_{#3}^c(#1,#2)} %%%%% complement of the closed ball with center #1 and radius #2
\newcommandx\cone[1][1=C]{\mathcal{#1}}
\newcommandx\conej[1][1=\mathbf{j}]{\mathcal{C}_{#1}}   %%%%% typical cone
\newcommandx{\coneindex}[2][2=\mathcal{C}]{#1_#2}
\newcommand\Nset{\mathbb{N}}
\newcommand\Zset{\mathbb{Z}}
\newcommand\Rset{\mathbb{R}}
\newcommandx\borel[1][1=\csms]{\mathcal{B}(#1)}   %% the borel sigma-field of $\Rset^{#1}$
\newcommand{\bszero}{{\boldsymbol0}}
\newcommand\indep{\dag}  %%% notation for independent blocks in beta-mixing blocking method; * was bad; \dag is ugly
\newcommand\bsQ{\boldsymbol{Q}}
\newcommand\bsx{\boldsymbol{x}}
\newcommand\bsX{\boldsymbol{X}}
\newcommand\bsy{\boldsymbol{y}}
\newcommand\bsY{\boldsymbol{Y}}
\newcommand\bsTheta{\boldsymbol{\Theta}}
\newcommand\bsnu{\boldsymbol{\nu}}
\newcommand\tailmeasure{\bsnu} %%%% the tail measure of a stationary time series
\newcommand\tailmeasurestar{{\bsnu}^*} %%%% the "center" of the tail measure of a stationary time series in terms of $Q$
\newcommand\ltwotmsmetric{\boldsymbol{\rho}^*} %%%% $L^2(\nu^*)$ metric
\newcommand\exc{\mce}  %%%% number of exceedences over 1
\newcommand\anchor{\mca} %%%%%%% generic notation for anchoring sequences
\newcommandx{\numult}[2][2=]{\bsnu_{\boldsymbol{#1}_{#2}}} %%%% same with an optional argument for index of vector
\newcommand\backest\Upsilon
\newcommandx{\nualphap}[2][1=\alpha,2=p]{\nu_{#1,#2}}  %%%% mesure of regular variation in dimension 1
\newcommandx\numultcondi[2][2=]{\boldsymbol{\nu}_{\boldsymbol{#1}_{#2}}}
\newcommandx\chain[1][1=Y]{\mathbb{#1}}
\newcommandx{\scalingseq}[1][1=n]{c_{#1}}  %Scaling in multivariate RV
\newcommandx{\scalingfunction}[1][1=]{c#1} %Scaling in multivariate RV
 \newcommandx{\absquantileseq}[2][1=]{a_{#1#2}}
\newcommandx{\dhinterseq}[1][1=n]{r_{#1}}  % the intermediate sequence in condition dh
\newcommandx{\dhinterseqsmall}[1][1=n]{\ell_{#1}}  % the intermediate small block sequence (related to condition dh)
\newcommandx{\dinterseq}[1][1=n]{r_{#1}}  % the intermediate sequence in condition d
\newcommandx{\tepseq}[1][1=n]{u_{#1}}  % the sequence $u_n$ to define the TEP
\newcommandx{\interseq}[1][1=n]{k_{#1}}  % intermediate sequence in TEP
\newcommandx{\scalingseqcone}[1][1=n]{c_{#1}} %Scaling sequence for MRV on cone
\newcommandx{\scalingseqhidden}[1][1=n]{\tilde{c}_{#1}}
\newcommandx{\scalingfunctionhidden}[1][1=]{\tilde{c}{#1}}
\newcommandx{\scalingseqcev}[1][1=n]{c^*_{#1}}
\newcommand\conditionS{\ensuremath{\mathcal{S}}} % condition for mean cluster size and for covariance of ordinary TEP
\newcommandx{\conditiondh}[2][1=\dhinterseq,2=\scalingseq]{\ensuremath{\mathcal{A}\mathcal{C}(#1,#2)}} % condition for tail process to tend to 0
\newcommandx{\conditionANSJB}[1][1=\scalingseq]{\mathrm{ANSJB}(\dhinterseq,#1)}
\newcommandx{\conditiondhsum}[1][1=\scalingseq]{\ensuremath{\mathcal{S}(\dhinterseq,#1)}} % condition for convergence of covariance of TEP
\newcommandx{\conditiondhsumW}[1][1=\scalingseq]{\ensuremath{\mathcal{SW}(\dhinterseq,#1)}} % condition for convergence of covariance of TEP
\newcommand\conditionR{\ensuremath{\mathcal{R}}} % $r_n\bar{F}(u_n)\to0$
\newcommand\convdistr{\stackrel{\mbox{\tiny\rm d}}{\longrightarrow}} % weak convergence of random variables
\newcommand\convprob{\stackrel{\tiny \mathbb{P}}{\longrightarrow}}
\newcommand\vaguelysharp{vaguely$^\#$}
\newcommandx\prohodistsym[1][1=]{\rho_{#1}}
\newcommandx\prohodist[3][3=]{\rho_{#3}(#1,#2)}
\newcommand\rme{\mathrm{e}}
\newcommand\rmd{\mathrm{d}} % Variable in integrals
\newcommand\esp{\mathbb E}
\newcommand\pr{\mathbb P}
\newcommand\var{\mathrm{Var}}
\newcommand\cov{\mathrm{Cov}}
\newcommandx{\autocov}[1][1=]{\gamma_{#1}}
\newcommand{\tail}[1]{\overline{#1}}
\newcommandx{\cdfnorm}[1][1=\bsX]{H_{#1}}
\newcommandx{\tailcdfnorm}[1][1=\bsX]{\overline{H}_{#1}}
\newcommand\ind[1]{\mathbbm{1}{\left\{#1\right\}}}
\newcommand\indicsmall[1]{\mathbbm{1}_{\left\{#1\right\}}}
\newcommand\mca{\mathcal A}
\newcommand\mcb{\mathcal B}
\newcommand\mce{\mathcal E}
\newcommand\mcf{\mathcal F}
\newcommand\mcg{\mathcal G}
\newcommand\mch{\mathcal H}
\newcommand\mck{\mathcal K}
\newcommand\mcl{\mathcal L}
\newcommand\bbZ{\mathbb{Z}}
\newcommandx\test[2][1=X]{{#1}_{#2}}
\newcommandx\orderstat[3][1=X]{{#1}_{(#2:#3)}}
\newcommand\statinterseq{k}
\newcommand{\tcb}{\textcolor{blue}}
\newcommandx{\sequence}[3][2=\Zset,3=j]{\{#1_{#3},#3\in#2\}}
\newcommandx{\sequenceshort}[2][2=j]{\{#1_#2\}}
\newcommandx\sequ[3][2=j,3=\mathbb{Z}]{\{#1_#2,#2\in#3\}}
\newcommandx\sequnorm[3][3=j,2=\mathbb{Z}]{\{\norm{#1_#3},#3\in#2\}}
\newcommandx\sequnormq[4][2=,4=j,3=\mathbb{Z}]{\{\norm{#1_#4}^{#2},#4\in#3\}}
\newcommandx\uncompactd[2][1=d]{(\overline{\Rset}^{#1})^{#2}\setminus\{\boldsymbol0\}}
\newcommandx{\barrsetproduct}[2][1=d]{(\overline{\Rset}^{#1})^{#2}}
\newcommandx{\rsetproduct}[2][1=d]{(\Rset^{#1})^{#2}}
\newcommand{\metricspace}{\csms}
\newcommand\spaceD{\mathbb{D}}  %%% cadlag functions
\newcommandx\csms[1][1=E]{\mathsf{#1}}   %%% a generic complete separable metric space
\newcommandx\borelcsms[1][1=E]{\mathcal{#1}}   %%% the borel sigma-field of the generic complete separable metric space
\newcommandx\mplusx[1][1=]{\mathcal{M}#1}  %%%% the borel measures
\newcommandx\mplusxp[1][1=]{\mathcal{N}{#1}} %%% point measures : with an $N$ as in kallenberg 17
\newcommandx\mplusxpb[1][1=\borelcsms]{\mathcal{N}_{pb}({#1})}  %%% point measures with bounded points
\newcommandx\mplusxpone[1][1=\borelcsms]{\mathcal{N}_{p1}({#1})}  %%% point measures with largest point with modulus 1
\newcommandx\mplusxpeps[1][1=\borelcsms]{\mathcal{N}_{p\epsilon}({#1})}  %%% point measures with one point with modulus greater than $\epsilon$.
\newcommandx\mplusxf[1][1=\borelcsms]{\mathcal{M}_f({#1})}
\newcommandx\mplusxpf[1][1=\borelcsms]{\mathcal{N}_{pf}({#1})} %%%% finite point measures
\newcommandx\mplusxps[1][1=\borelcsms]{\mathcal{N}_{ps}({#1})} %%%% simple point measures
\newcommandx\mplusxpS[1][1=\borelcsms]{\mathcal{N}_{pS}({#1})} %%%%  point measures  with summable points
\newcommandx\mplusxpsc[1][1=\borelcsms]{\mathcal{N}_{psc}({#1})} %%%%  point measures with jumps of constant sign
\newcommand\distance{\mathrm{d}}  %%%% generic distance
\newcommandx\metric[1][1=\metricspace]{\distance_{#1}} %%%% metric for a general metric space $\metricspace$
\newcommandx\metricmcg{\rho} %%%% metric for the index class $\cg$
\newcommandx\bracknum[3][2=\mch]{N_{[\,]}(#1,#2,#3)} % not needed anymore
\newcommandx\bracknumarray[2][2=\mch]{N_{[\,]}(#1,#2,L^2_n)} % bracketing number for arrays - in particular for tep % not needed anymore
\newcommandx\entropynum[3][3=\mch]{N(#1,#3,#2)} % not needed anymore
\newcommandx\process[1][1=X]{\mathbb{#1}}
\newcommandx\hillest[3][1=n,3=]{\widehat{\gamma}_{#1,#2}^{#3}}
\newcommandx\hillmoment[2][1=n]{\widehat{\gamma}_{#1,#2}^{(M)}}
\newcommand\TEPclusterlimit{\mathbb{G}} % limit of TEP for cluster functionals
\newcommand\lzero{\ell_0}
\newcommand\tildelzero{\tilde{\ell}_0}
\newcommand\lone{\ell_1}
\newcommandx\lalpha[1][1=\alpha]{\ell_{#1}}
\newcommand\shift{B}
\newcommand\iid{i.i.d.}
\newcommand\withoutlog{without loss of generality}
\newcommand\wrt{with respect to}
\newcommand\ie{i.e.}
\newcommand\semimetric{semi-metric}
\newcommand\rhs{right-hand side}
\newcommand\fidi{finite-dimensional}
\newcommand\realvalued{real-valued}
\newcommand\nonnegative{non-negative}
\newcommand\shiftinvariant{shift-invariant}
\newcommand\Qseq{conditional spectral tail process}
\begin{document}
\title{Estimation of cluster functionals for regularly varying time series: sliding blocks estimators}

\author{Youssouph Cissokho\thanks{University of Ottawa}\and Rafa{\l} Kulik\thanks{University of Ottawa} }

\date{\today}
\maketitle

\begin{abstract}
Cluster indices describe extremal behaviour of stationary time series.
We consider their sliding blocks estimators. Using a modern theory of multivariate, regularly varying time series, we obtain central limit theorems under conditions that can be easily verified for a large class of models. In particular, we show that in the Peak over Threshold framework, sliding and disjoint blocks estimators have the same limiting variance.
\end{abstract}

\section{Introduction}
Consider
a stationary, regularly varying $\Rset^d$-valued time series $\bsX=\sequence{\bsX}$. We are interested in estimating cluster indices that describe its extremal behaviour. Informally speaking, a cluster is a triangular array $(\bsX_1/\tepseq,\ldots,\bsX_{\dhinterseq}/\tepseq)$ with $\dhinterseq,\tepseq\to\infty$ that converges in distribution in a certain sense. Cluster indices are obtained by applying the appropriate  functional $H$ to the cluster.
The functionals are defined on $(\Rset^d)^\Zset$ and are such that their values do not depend on coordinates that are equal to zero.
More precisely, for $\bsX=\{\bsX_j,j\in\Zset\}\in (\Rset^d)^\Zset$ and $i\leq j\in\Zset$,  we denote $\bsX_{i,j}=(\bsX_i,\ldots, \bsX_j)\in (\Rset^d)^{(j-i+1)}$.
Then, we identify $H(\bsX_{i,j})$ with
$H((\bszero,\bsX_{i,j},\bszero))$, where $\bszero\in (\Rset^d)^\Zset$ is the zero sequence. Such functionals $H$ will be called \textit{cluster functionals}.

Let $\norm{\cdot}$ be an arbitrary norm on $\Rset^d$ and $\{\tepseq\}$, $\{\dhinterseq\}$ be such that
\begin{align}\label{eq:rnbarFun0}
\lim_{n\to\infty}\tepseq=\lim_{n\to\infty}\dhinterseq =\lim_{n\to\infty}n\pr(\norm{\bsX_0}>\tepseq) = \infty\;, \ \ \lim_{n\to\infty}\dhinterseq/n=\lim_{n\to\infty}\dhinterseq\pr(\norm{\bsX_0}>\tepseq) = 0\;.
\tag{$\conditionR(\dhinterseq,\tepseq)$}
\end{align}
Given a cluster functional $H$ on $(\Rset^d)^\Zset$, we want to estimate the limiting
quantity
\begin{align}
\tailmeasurestar(H)=  \lim_{n\to\infty}  \tailmeasurestar_{n,\dhinterseq}(H)=\lim_{n\to\infty} \frac{\esp[H(\bsX_{1,\dhinterseq}/\tepseq)]}{\dhinterseq \pr(\norm{\bsX_0}>\tepseq)}\;.  \label{eq:thelimitwhichisnolongercalledbH}
\end{align}
To guarantee existence of the limit we will require additional anticlustering assumptions on the time series $\sequence{\bsX}$. The cluster indices of interest are, among others:
\begin{itemize}
\item the extremal index obtained with $H_1(\bsx)=\ind{\bsx^*>1}$, $\bsx=\{\bsx_j,j\in\Zset\}\in(\Rset^d)^\Zset$;
\item the cluster size distribution obtained with
\begin{align}\label{eq:h2}
  H_2(\bsx)=\ind{\sum_{j\in\Zset}\ind{\norm{\bsx_j}>1}=m}\;, \ \   \bsx=\{\bsx_j,j\in\Zset\}\in(\Rset^d)^\Zset\;, \ \  m\in\Nset \;;
  \end{align}
\item the stop-loss index of a univariate time series obtained with
\begin{align}\label{eq:h3}
H_3(\bsx) = \ind{\sum_{j\in\Zset} (x_j-1)_+ > \eta}\;, \ \    \bsx=\{\bsx_j,j\in\Zset\}\in\Rset^\Zset\;, \ \    \eta>0\;;
\end{align}
\item the large deviation index of a univariate time series obtained with
 \begin{align}\label{eq:h4}
 H_4(\bsx)=\ind{K(\bsx)>1}\;, \ \  K(\bsx) = \left(\sum_{j\in\Zset} x_j\right)_+\;, \ \
 \bsx=\{\bsx_j,j\in\Zset\}\in\Rset^\Zset\;;
  \end{align}
\item the ruin index of a univariate time series obtained with
\begin{align}\label{eq:h5}
  H_5(\bsx)=\ind{K(\bsx)>1}\;, \ \   K(\bsx) = \sup_{i\in\Zset} \left(\sum_{j\leq i}
    x_j\right)_+\;, \ \ \bsx=\{\bsx_j,j\in\Zset\}\in\Rset^\Zset\;.
  \end{align}
\end{itemize}
We note that the extremal index is the classical quantity that arises in the extreme value theory for dependent sequences, the large deviation index was studied under the name \textit{cluster index} in \cite{mikosch:wintenberger:2013,mikosch:wintenberger:2014}, the cluster size distribution is again a well-known object and was studied in \cite{hsing:1991} and \cite{drees:rootzen:2010}, while the remaining cluster indices seem to be new.

Several methods of estimation of the limit $\tailmeasurestar(H)$ in \eqref{eq:thelimitwhichisnolongercalledbH} may be
employed. The natural one is to consider a
statistics based on disjoint blocks of size $\dhinterseq$, cf. \cite{drees:rootzen:2010} and \cite{kulik:soulier:2020},
\begin{align*}
 \tedcluster(H):= \frac{1}{n\pr(\norm{\bsX_0}>\tepseq)}   \sum_{i=1}^{m_n}
  H(\bsX_{(i-1)\dhinterseq+1,i\dhinterseq}/\tepseq) \;,
  %\label{eq:blocktype}
\end{align*}
where $m_n=[n/\dhinterseq]$. The data-based estimator is constructed as follows. Let $\statinterseqn\to\infty$ be a sequence of integers and define $\tepseq$ by   $\statinterseqn=n\pr(\norm{\bsX_0}>\tepseq)$. Let $\orderstat[\norm{\bsX}]{n}{1}\leq \cdots \leq \orderstat[\norm{\bsX}]{n}{n}$ be order statistics from $\norm{\bsX_1},\ldots,\norm{\bsX_n}$.
Define
\begin{align}\label{eq:feasible-estimator-of-cluster-index-consistency}
  \TEDclusterrandom_{n,\dhinterseq}(H)
  &:= \frac{1}{\statinterseqn} \sum_{i=1}^{m_n} H(\bsX_{(i-1)\dhinterseq+1,i\dhinterseq}/\orderstat[\norm{\bsX}]{n}{n-\statinterseqn}) \; .
\end{align}
Although some special cases were considered (estimation of the extremal index in \cite{hsing:1991} and \cite{smith:weissman:1994}; tail array sums in \cite{rootzen:leadbetter:dehaan:1998}), the general theory was developed in \cite{drees:rootzen:2010}. The summary of the theory for the disjoint blocks estimators can be found in \cite[Chapter 10]{kulik:soulier:2020}, where consistency and the central limit theorems are established.

In this paper we consider the sliding blocks statistics
\begin{align}\label{eq:sliding-block-estimator-nonfeasible-1}
\tedclustersl(H):=\frac{1}{q_n \dhinterseq \pr(\norm{\bsX_0}>\tepseq)}\sum_{i=0}^{q_n-1}H\left(\bsX_{i+1,i+\dhinterseq}/\tepseq\right)\;,
\end{align}
where $q_n=n-\dhinterseq-1$
and the corresponding estimator defined in terms of order statistics:
\begin{align}\label{eq:sliding-block-estimator-feasible}
\TEDclusterrandomsl_{n,\dhinterseq}(H)=\frac{1}{\dhinterseq\statinterseqn}
\sum_{i=0}^{q_n-1}H\left(\bsX_{i+1,i+\dhinterseq}/\orderstat[\norm{\bsX}]{n}{n-\statinterseqn}\right)\;.
\end{align}

The sliding blocks estimators have been studied for some specific functionals $H$, however there has been no unified theory available. Recently, \cite{drees:neblung:2020} used the framework of \cite{drees:rootzen:2010} and showed that the limiting variance of the sliding blocks estimator never exceeds that of the disjoint blocks estimator. In case of the extremal index, both variances are equal.

The goal of this paper is to obtain the asymptotic normality of the sliding blocks estimators.
Our focus is on providing the conditions that can be easily verified for a variety of time series models. At the same time, we will show that the limiting variance of both disjoint and sliding blocks estimators is the same. To achieve our goal, we combine \cite{drees:rootzen:2010} approach with the modern theory of stationary, regularly varying time series.

In order to proceed, in \Cref{sec:prel} we fix the notation, recall the notion of the tail process associated to a stationary regularly varying time series; and introduce the cluster indices.

Next, we need to answer a non-trivial question: \textit{When does the limit $\tailmeasurestar(H)$ exist?}.
For this,
\Cref{sec:weakconv-cluster-measure} deals with convergence of cluster measures and cluster indices $\tailmeasurestar(H)$ appear as the limit. Existence of the limit requires an anticlustering assumption. In conjunction with a particular choice of functionals, we will be in position to give specific examples of cluster indices. The contents of this section is based on
\cite[Chapter 6]{kulik:soulier:2020}.
Some results stem from \cite{mikosch:wintenberger:2014,mikosch:wintenberger:2016} and  \cite{basrak:planinic:soulier:2018}.

The main result is \Cref{thm:sliding-block-clt-1}. We prove the central limit theorem for the data-based sliding blocks estimator \eqref{eq:sliding-block-estimator-feasible} under easy to verify assumptions.
Those conditions can be verified for a variety of models: regularly varying functions of Markov chains, infinite order moving averages, max-stable processes. See \cite{kulik:soulier:wintenberger:2018} and \cite[Part III]{kulik:soulier:2020}.

The most important (and somehow surprising) conclusion is that both sliding \eqref{eq:sliding-block-estimator-feasible} and disjoint \eqref{eq:feasible-estimator-of-cluster-index-consistency} blocks estimators yield the same variance. This is in agreement with the result for the extremal index in \cite{drees:neblung:2020}. On the other hand, it seems to be a contradiction with other available results. We explain this in \Cref{sec:comments}.

All proofs are included in \Cref{sec:proofs}.

\section{Preliminaries}\label{sec:prel}
In this section we fix the notation and introduce the relevant classes of functions. In \Cref{sec:tail-process} we recall the notion of the tail and the spectral tail process (cf. \cite{basrak:segers:2009}). In \Cref{sec:cluster-index} we define cluster indices; see \cite[Chapter 5]{kulik:soulier:2020} for a detailed introduction.
\subsection{Notation}
Let $|\cdot|$ be a norm on $\Rset^d$.
For a sequence $\bsx=\{\bsx_j,j\in\Zset\}\in (\Rset^d)^\Zset$ and $i\leq j\in\Zset\cup\{-\infty,\infty\}$ we denote $\bsx_{i,j}=(\bsx_i,\ldots,\bsx_j)\in (\Rset^d)^{j-i+1}$, $\bsx_{i,j}^*=\max_{i\leq l\leq j}|\bsx_l|$ and $\bsx^*=\sup_{j\in\Zset}|\bsx_j|$. By $\bszero$ we denote the zero sequence; its dimension can be different of each of its occurrences.

By $\lzero(\Rset^d)$ we denote the set of $\Rset^d$-valued sequences which tend to zero at infinity. Likewise, $\lone(\Rset^d)$
consists of sequences such that $\sum_{j\in\Zset}|\bsx_j|<\infty$.

We will use the blocking method. If $\bsX$ is a time series of interest, then  $(\bsX_1^\indep,\dots,\bsX_n^\indep)$ is a pseudo-sample
  such that the blocks $(\bsX_{(i-1)\dhinterseq+1}^\indep,\dots,\bsX_{i\dhinterseq}^\indep)$,
  $i=1,\ldots,m_n=[n/\dhinterseq]$, are mutually independent with the same distribution as the original block
  $(\bsX_1,\dots,\bsX_{\dhinterseq})$.

\subsection{Classes of functions}\label{sec:classes}
Functionals $H$ are defined on $\lzero(\Rset^d)$ with the convention $H(\bsx_{i,j})=H((\bszero,\bsx_{i,j},\bszero))$. In particular,
the map $\exc$ is defined on $\lzero(\Rset^d)$ by
$\exc(\bsx) = \sum_{j\in\Zset} \ind{\norm{\bsx_j}>1}$. For $s>0$, the function $H_s:(\Rset^d)^\Zset\to\Rset$ is defined by $H_s(\bsx)=H(\bsx/s)$.
We consider the following classes:
\begin{itemize}
\item $\mcl$ is the class of bounded \realvalued\ functions defined on $(\Rset^d)^\Zset$ that are either
Lipschitz continuous \wrt\ the uniform norm or almost
surely continuous with respect to the distribution of the tail process $\bsY$.
This class includes functions like $\ind{\bsx^*>1}$, $\ind{\sum_{j\in\Zset}|\bsx_j|>1}$.
See Remark 6.1.6 in \cite{kulik:soulier:2020}.

\item $\mca\subset\mcl$ is the class of \shiftinvariant\ functionals with support separated
  from $\bszero$. In particular, for $H\in \mca$, $H(\bszero)=0$. The class $\mca$ includes $\ind{\bsx^*>1}$.
\item $\mck$ is the class of shift-invariant functionals $K:(\Rset^d)^\Zset\to\Rset$ defined on
$\lone(\Rset^d)$ such that $K(\bszero)=0$ and which are
  Lipschitz continuous with constant $L_K$, \ie\,
  \begin{align}
    |K(\bsx) - K(\bsy)| \leq L_K \sum_{j\in\Zset} \norm{\bsx_j-\bsy_j}\;, \ \ \bsx,\bsy\in\lone(\Rset^d) \; .  \label{eq:hypo-K}
  \end{align}
\item $\mcb\subset\mcl$ is the class of functionals $H$ of the form $H=\ind{K>1}$,
where $K\in\mck$.
Functionals in $\mcb$ may have support which is not
separated from $\bszero$. The typical example is $H(\bsx)=\ind{\sum_{j}|\bsx_j|>1}$; note that $H\not\in\mca$.
\end{itemize}

\subsection{Tail and spectral tail process}\label{sec:tail-process}
Let $\bsX=\sequence{\bsX}$ be a stationary, regularly varying time series with values in $\Rset^d$ and tail index $\alpha$. In particular,
\begin{align*}
\lim_{x\to\infty}\frac{\pr(|\bsX_0|> tx)}{\pr(|\bsX_0|> x)}=t^{-\alpha}
\end{align*}
for all $t>0$.
 Then, there exists a sequence $\bsY=\sequence{\bsY}$ such that
\begin{align*}
  %\label{eq:def-numultprob}
  \pr(x^{-1}(\bsX_i,\dots,\bsX_j) \in \cdot \mid |\bsX_0|>x)   \mbox{ converges weakly to }  \pr((\bsY_i,\dots,\bsY_j) \in \cdot)
\end{align*}
as $x\to\infty$ for all $i\leq j\in\Zset$.
We call $\bsY$ the tail process.
See \cite{basrak:segers:2009}. Equivalently, viewing $\bsX$ and $\bsY$ as random elements with values in $(\Rset^d)^\Zset$, we have for every bounded or
\nonnegative\ functional~$H$ on $(\Rset^d)^{\Zset}$, continuous \wrt\ the product topology,
\begin{align*}
   \lim_{x\to\infty} \frac{\esp[H(x^{-1}\bsX)\ind{\norm{\bsX_0}>x}]} {\pr(\norm{\bsX_0}>x)}
  & =  \esp[H(\bsY)] \; .
\end{align*}
%The above identity is achieved by means of the so-called tail measure; see \cite{meinguet:segers:2010}, \cite{owada:samorodnitsky:2012}, \cite{planinic:soulier:2018} and \cite[Chapter 5]{kulik:soulier:2020} for details.
%
Define
$\bsTheta_j = {\bsY_j}/{|\bsY_0|}$, $j\in\Zset$.
The sequence $\bsTheta=\sequence{\bsTheta}$ is called the spectral tail
process. The random variable $|\bsY_0|$ has the Pareto distribution with index $\alpha$ and is independent from $\bsTheta$. Hence
for a \nonnegative\ measurable function $H:(\Rset^d)^\Zset\to\Rset$,
\begin{align}
  \esp[H(\bsY)]
  & = \int_1^\infty  \esp[H(r\bsTheta)] \alpha r^{-\alpha-1} \rmd r \; . \label{eq:polar2}
\end{align}
\subsection{Cluster measure and cluster indices}\label{sec:cluster-index}
Consider
the infargmax functional $\anchor_0$ defined on $(\Rset^d)^\Zset$ by
    $\anchor_0(\bsy)=\inf\{j:\bsy_{-\infty,{j}}^*=\bsy^*\}$, with the convention that
    $\inf\emptyset=+\infty$.
If  $\pr(\anchor_0(\bsY)\notin\Zset)=0$ then we can define
\begin{align}
  \label{eq:canditheta-anchor}
  \canditheta = \pr(\anchor_0(\bsY)=0) \; .
\end{align}
In fact, $\anchor_0$ can be replaced with any anchoring map (see \cite{planinic:soulier:2018} and \cite[Theorem 5.4.2]{kulik:soulier:2020}). In particular,
\begin{align*}
\canditheta=\pr(\anchor_0(\bsY)=0)=\pr\left(\sup_{j\leq -1}|\bsY_j|\leq 1\right)=\pr\left(\sup_{j\geq 1}|\bsY_j|\leq 1\right)\;.
\end{align*}
Therefore, $\canditheta$ can be recognized as the (candidate) extremal index. It becomes the usual extremal index under additional mixing and anticlustering conditions.

\begin{definition}[Cluster measure]
  \label{def:clustermeasure}
  Let $\bsY$ and $\bsTheta$ be the tail process and the spectral tail process, respectively, such that $\pr(\lim_{|j|\to\infty} \bsY_j=\bszero)=1$. The {cluster measure} is the measure $\tailmeasurestar$ on
  $\lzero(\Rset^d)$ defined by
  \begin{align}
    \label{eq:def-tailmeasurestar-premiere}
\tailmeasurestar    = \canditheta \int_0^\infty \esp[\delta_{r\bsTheta}\ind{\anchor_0(\bsTheta)=0}] \alpha r^{-\alpha-1} \rmd r   \; .
\end{align}
\end{definition}
The measure $\tailmeasurestar$ is
boundedly finite on $(\Rset^d)^\Zset\setminus\{\bszero\}$, puts no mass at $\bszero$ and is
$\alpha$-homogeneous.
%
%We note that the cluster measure can be introduced in a much more general framework, with a help of an %arbitrary anchoring map. See \cite{planinic:soulier:2018} and \cite[Chapter 5]{kulik:soulier:2020}.
%
Furthermore, the cluster measure can be expressed in terms of another sequence.
\begin{definition}
    \label{def:sequence-Q}
    Assume that $\pr(\anchor_0(\bsY)\notin\Zset)=0$. The \Qseq\ $\bsQ$ is a random sequence with the distribution of
    $(\bsY^*)^{-1}\bsY$ conditionally on $\anchor_0(\bsY)=0$.
    %\Cref{eq:exc-finite-implies-anchor-finite}.
  \end{definition}
The sequence $\bsQ$ appeared implicitly in the seminal
paper \cite{davis:hsing:1995}.
See also \cite{basrak:segers:2009}, \cite[Definition 3.5]{planinic:soulier:2018} and \cite[Chapter 5]{kulik:soulier:2020}. An abstract setting is considered in \cite{dombry:hashorva:soulier:2018}.

Note that $\anchor_0(\bsY)=0$ if and only if $\anchor_0(\bsTheta)=0$. Then also $\bsY^*=\norm{\bsY_0}$. Thus, \eqref{eq:def-tailmeasurestar-premiere} and the definition of $\bsQ$ give for a bounded or \nonnegative\ measurable function $H$ on $\lzero(\Rset^d)$,
\begin{align}\label{eq:seq-Q}
 \tailmeasurestar(H) & =
 \vartheta \int_0^\infty \esp[H(r\bsQ)]  \alpha r^{-\alpha-1} \rmd r =
 \vartheta \int_0^\infty \esp[H(r\bsTheta)\ind{\anchor_0(\bsTheta)=0}]  \alpha r^{-\alpha-1} \rmd r
 \; .
\end{align}
If moreover $H$ is such that $H(\bsy)=0$ if
$\bsy^*\leq\epsilon$ for one $\epsilon>0$, then
\begin{align}
  \label{eq:relation-cluster-Y-Q-Theta-epsilon}
  \tailmeasurestar(H) = \epsilon^{-\alpha} \esp[H(\epsilon\bsY) \ind{\anchor_0(\bsY)=0}] \; .
\end{align}
Comparing \eqref{eq:def-tailmeasurestar-premiere} or \eqref{eq:relation-cluster-Y-Q-Theta-epsilon} with
\eqref{eq:polar2} we can see that the $\tailmeasurestar(H)$ does not agree with $\esp[H(\bsY)]$. The additional indicator comes essentially from the conditioning on the location of the maximum of the sequence $\bsY$.

\begin{definition}[Cluster index]
\label{def:cluster-index}
We will call $\tailmeasurestar(H)$ the cluster index associated to the
functional $H$.
\end{definition}

\section{Convergence of cluster measure}\label{sec:weakconv-cluster-measure}
Recall \ref{eq:rnbarFun0}.
Define the measures $\tailmeasurestar_{n,\dhinterseq}$, $n\geq1$, on $\lzero(\Rset^d)$ as follows:
\begin{align*}
 % \label{eq:def-tailmeasurestar_n}
  \tailmeasurestar_{n,\dhinterseq}
  & =  \frac{1} {\dhinterseq \pr(\norm{\bsX_0}>\tepseq)} \esp \left[\delta_{\tepseq^{-1}\bsX_{1,\dhinterseq}} \right] \; .
\end{align*}
We are interested in convergence of $\tailmeasurestar_{n,\dhinterseq}$ to
$\tailmeasurestar$. The results of this section are extracted from \cite[Chapter 6]{kulik:soulier:2020}.
See also \cite{planinic:soulier:2018} and \cite{basrak:planinic:soulier:2018}.
\subsection{Anticlustering condition}
For each fixed $r\in\Nset$, the distribution of
$\tepseq^{-1}\bsX_{-r,r}$ conditionally on $\norm{\bsX_0}>\tepseq$ converges weakly to the
distribution of $\bsY_{-r,r}$. In order to let $r$ tend to infinity, we must embed all these finite
vectors into one space of sequences. By adding zeroes on each side of the vectors
$\tepseq^{-1}\bsX_{-r,r}$ and $\bsY_{-r,r}$ we identify them with elements of the space
$\lzero(\Rset^d)$. Then $\bsY_{-r,r}$
converges (as $r\to\infty$) to $\bsY$ in $\lzero(\Rset^d)$ if (and only if) $\bsY\in\lzero(\Rset^d)$ almost surely.
%
%Since $\lzero(\Rset^d)$ endowed
%with the uniform norm is a complete separable metric space, weak convergence in $\lzero(\Rset^d)$ is
%metrizable and we can apply the triangular argument to
%obtain the following result.
%\begin{lemma}
%  \label{lem:existence-convergence-tail-process-rn}
%  Let $\{\tepseq\}$ be a scaling sequence. If
%  \begin{align}
%    \label{eq:tailprocesstozero}
%    \pr \left( \lim_{|j|\to\infty} \norm{\bsY_j} = 0 \right) = 1 \;,
%  \end{align}
%  then there exists an intermediate sequence
%  $\{\dhinterseq\}$ such that the distribution of
%  $\tepseq^{-1}\bsX_{-\dhinterseq,\dhinterseq}$, conditionally on $\norm{\bsX_0}>\tepseq$, converges
%  weakly in $\lzero(\Rset^d)$ to the distribution of $\bsY$. Moreover,  for all $x>0$,
%  \begin{align}
%    \label{eq:anticlustering-1}
%    \lim_{m\to\infty}  \limsup_{n\to\infty}
%    \pr \left( \max_{m \leq |j| \leq \dhinterseq} \norm{\bsX_j} >\tepseq x \mid \norm{\bsX_0}>\tepseq \right) = 0 \; .
%  \end{align}
%\end{lemma}
However, this is not enough for statistical purposes and we consider the following definition.
\begin{definition}
  [\cite{davis:hsing:1995}, Condition~2.8]\label{def:DH}
  Condition~\ref{eq:conditiondh}
  holds if for all $x,y>0$,
  \begin{align}
    \label{eq:conditiondh}
    \lim_{k\to\infty} \limsup_{n\to\infty}\pr\left(\max_{k\le |j|\leq  \dhinterseq}|\bsX_j|
    > \tepseq x\mid |\bsX_0|> \tepseq y \right)=0 \; .
    \tag{$\conditiondh[\dhinterseq][\tepseq]$}
  \end{align}
\end{definition}
Condition \ref{eq:conditiondh} is referred to as the anticlustering condition. It is fulfilled by many models, including geometrically ergodic Markov chains,
short-memory linear or max-stable processes.
\ref{eq:conditiondh} implies that $\bsY\in \lzero(\Rset^d)$. Its
main consequence is the following result.
  \begin{proposition}[\cite{basrak:segers:2009}, Proposition~4.2; \cite{kulik:soulier:2020}, Theorem 6.1.4]
  \label{lem:tailprocesstozero}
  Let $H\in \mcl$.
  If Condition~\ref{eq:conditiondh}
  holds, then
  \begin{align*}
    \lim_{n\to\infty} \esp[H(\tepseq^{-1} \bsX_{-\dhinterseq,\dhinterseq})\mid \norm{\bsX_0}>\tepseq] = \esp[H(\bsY)] \; .
    %\label{eq:conv-H-cluster}
  \end{align*}
\end{proposition}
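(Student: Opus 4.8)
The plan is to turn the statement into a weak-convergence assertion in the separable metric space $\lzero(\Rset^d)$ equipped with the uniform norm, and then to invoke the continuous mapping theorem. Identify $\tepseq^{-1}\bsX_{-\dhinterseq,\dhinterseq}$ and each truncated vector $\bsY_{-k,k}$ with elements of $\lzero(\Rset^d)$ by padding with zeros; note that $\{\norm{\bsX_0}>\tepseq\}$ has positive probability once $n$ is large, by~\ref{eq:rnbarFun0}. It suffices to establish two facts: (a) under \ref{eq:conditiondh}, $\pr(\bsY\in\lzero(\Rset^d))=1$, so that $\law{\bsY}$ is a Borel probability measure on $\lzero(\Rset^d)$; and (b) $\conditionallaw{\tepseq^{-1}\bsX_{-\dhinterseq,\dhinterseq}}{\norm{\bsX_0}>\tepseq}$ converges weakly in $\lzero(\Rset^d)$ to $\law{\bsY}$. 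Granting (a)--(b), every $H\in\mcl$ is bounded and $\law{\bsY}$-almost everywhere continuous on $\lzero(\Rset^d)$: if $H$ is Lipschitz with respect to the uniform norm it is continuous everywhere on $\lzero(\Rset^d)$, while if $H$ is almost surely continuous with respect to the distribution of $\bsY$ then its discontinuity set is $\law{\bsY}$-null — and since the uniform topology on $\lzero(\Rset^d)$ is finer than the product topology, the same holds for the discontinuity set relative to the uniform topology. The continuous mapping theorem for weak convergence then gives $\esp[H(\tepseq^{-1}\bsX_{-\dhinterseq,\dhinterseq})\mid\norm{\bsX_0}>\tepseq]\to\esp[H(\bsY)]$, which is the claim.

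It remains to prove (a) and (b). Since $\lzero(\Rset^d)$ is Polish and its Borel $\sigma$-field is generated by the coordinate projections, weak convergence there is equivalent to convergence of all finite-dimensional distributions together with tightness of the family. The finite-dimensional convergence is precisely the defining property of the tail process recalled in \Cref{sec:tail-process}: for each fixed $k$, $\conditionallaw{\tepseq^{-1}(\bsX_{-k},\ldots,\bsX_k)}{\norm{\bsX_0}>\tepseq}$ converges weakly to $\law{(\bsY_{-k},\ldots,\bsY_k)}$. As a subset of $\lzero(\Rset^d)$ is relatively compact if and only if it is bounded and its members tend to zero at infinity uniformly, tightness of the conditional laws amounts to
\begin{align*}
\lim_{k\to\infty}\limsup_{n\to\infty}\pr\Bigl(\max_{k\le|j|\le\dhinterseq}\norm{\bsX_j}>\tepseq x\ \Bigm|\ \norm{\bsX_0}>\tepseq\Bigr)=0\;,\qquad x>0\;,
\end{align*}
which is exactly Condition~\ref{eq:conditiondh}, together with boundedness in probability of $\tepseq^{-1}\max_{|j|\le\dhinterseq}\norm{\bsX_j}$; the latter follows by splitting this maximum at a large index $k$, the part over $k\le|j|\le\dhinterseq$ being controlled by the displayed estimate and the part over $|j|<k$ being tight by the finite-dimensional convergence. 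The same two inputs yield (a): letting $n\to\infty$ and then $k\to\infty$ in Condition~\ref{eq:conditiondh}, using that $\dhinterseq\to\infty$ and invoking the finite-dimensional convergence (Portmanteau for open sets), one obtains $\pr(\sup_{|j|\ge k}\norm{\bsY_j}>x)\to0$ for every $x>0$, i.e.\ $\bsY\in\lzero(\Rset^d)$ almost surely.

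For the Lipschitz subclass of $\mcl$ one may alternatively argue by hand, writing $\esp[H(\tepseq^{-1}\bsX_{-\dhinterseq,\dhinterseq})\mid\norm{\bsX_0}>\tepseq]-\esp[H(\bsY)]$ as a sum of three terms obtained by inserting $\esp[H(\tepseq^{-1}\bsX_{-k,k})\mid\norm{\bsX_0}>\tepseq]$ and $\esp[H(\bsY_{-k,k})]$: the term $\esp[H(\tepseq^{-1}\bsX_{-k,k})\mid\norm{\bsX_0}>\tepseq]-\esp[H(\bsY_{-k,k})]$ tends to $0$ as $n\to\infty$ for each fixed $k$ by finite-dimensional convergence and continuity of $H$; the term $\esp[H(\bsY_{-k,k})]-\esp[H(\bsY)]$ tends to $0$ as $k\to\infty$ by (a) and the bound $|H(\bsY_{-k,k})-H(\bsY)|\le L\sup_{|j|>k}\norm{\bsY_j}$ with $L$ the Lipschitz constant; and the remaining term $\esp[H(\tepseq^{-1}\bsX_{-\dhinterseq,\dhinterseq})-H(\tepseq^{-1}\bsX_{-k,k})\mid\norm{\bsX_0}>\tepseq]$ is bounded, using $|H|\le\supnorm{H}$ and the Lipschitz property on the event $\{\max_{k<|j|\le\dhinterseq}\norm{\bsX_j}\le\tepseq x\}$, by $Lx+2\supnorm{H}\,\pr(\max_{k<|j|\le\dhinterseq}\norm{\bsX_j}>\tepseq x\mid\norm{\bsX_0}>\tepseq)$, after which one lets $n\to\infty$, then $k\to\infty$ (invoking \ref{eq:conditiondh}), then $x\to0$. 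I expect the delicate point to be the almost-surely-continuous subclass of $\mcl$, for which this term-by-term scheme breaks down: there is no reason for the discontinuity set of $H$ to be negligible for the laws of the zero-padded vectors $\bsY_{-k,k}$, so one cannot pass to the limit separately in each term. This is exactly why it pays to establish genuine weak convergence in the sequence space $\lzero(\Rset^d)$, where the continuous mapping theorem applies once and for all — the key structural point being that the anticlustering condition~\ref{eq:conditiondh} is nothing but the tightness condition in $\lzero(\Rset^d)$.
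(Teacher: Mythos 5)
The paper gives no in-text proof of this proposition --- it is imported from \cite{basrak:segers:2009} (Proposition~4.2) and \cite{kulik:soulier:2020} (Theorem~6.1.4 and Remark~6.1.6) --- and your argument is correct and essentially reproduces the proof in those sources: finite-dimensional convergence to the tail process together with the anticlustering condition~\ref{eq:conditiondh} (which is precisely the compactness/uniform-decay control in $\lzero(\Rset^d)$) upgrades the conditional laws to weak convergence in $(\lzero(\Rset^d),\|\cdot\|_\infty)$, after which both subclasses of $\mcl$ are handled at once by the continuous mapping theorem for bounded, a.e.-continuous functionals. Your alternative three-term decomposition for the Lipschitz subclass is also exactly the triangular argument the authors themselves deploy in the proofs of \Cref{theo:cluster-RV} and \Cref{lem:cond-conv-H}.
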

Condition \ref{eq:conditiondh} holds for sequence of \iid\ random
variables whenever $\lim_{n\to\infty} \dhinterseq\pr(\norm{\bsX_0}>\tepseq)=0$, which can be recognized as on the restrictions imposed in \ref{eq:rnbarFun0}
\subsection{Vague convergence of cluster measure}
We now investigate the unconditional convergence of
$\tepseq^{-1}\bsX_{1,\dhinterseq}$. Contrary to
\Cref{lem:tailprocesstozero}, where an extreme value was imposed at time 0, a large value in the
cluster can happen at any time. Moreover, the convergence of $\tailmeasurestar_{n,\dhinterseq}(H)$ to
$\tailmeasure^*(H)$ may hold only for \shiftinvariant\ functionals $H$. Therefore, we need the following definition.
\begin{definition}
  \label{def:spacetildelzero}
  The space $\tildelzero(\Rset^d)$ is the space of equivalence classes of $\lzero(\Rset^d)$ endowed with the
  equivalence relation $\sim$ defined by
  \begin{align*}
    \bsx\sim \bsy \Longleftrightarrow \exists j \in\Zset \; , \ \shift^j\bsx=\bsy \; ,
  \end{align*}
  where $\shift$ is the backshift operator.
\end{definition}
The proof of the next result is given in \Cref{sec:proofs}.
\begin{proposition}
\label{theo:cluster-RV}
  Let  condition~\ref{eq:conditiondh} hold. The sequence of measures $\tailmeasurestar_{n,\dhinterseq}$, $n\geq1$ converges
    \vaguelysharp\ on $\tildelzero(\Rset^d)\setminus\{\bszero\}$ to~$\tailmeasurestar$, that is,
    for all $H\in \mca$,
  \begin{align*}
    \lim_{n\to\infty} \tailmeasurestar_{n,\dhinterseq}(H)
    = \lim_{n\to\infty}\frac{\esp[ H(\tepseq^{-1}\bsX_{1,\dhinterseq})]} {\dhinterseq \pr(\norm{\bsX_0}>\tepseq)}
    = \tailmeasure^*(H) \; .
  \end{align*}
\end{proposition}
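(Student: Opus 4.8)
The plan is to turn $\esp[H(\tepseq^{-1}\bsX_{1,\dhinterseq})]$ into a sum of $\dhinterseq$ terms, each obtained by anchoring the block at a distinguished exceedance, and then to pass to the limit term by term using \Cref{lem:tailprocesstozero} and \eqref{eq:relation-cluster-Y-Q-Theta-epsilon}. Since $H\in\mca$ has support separated from $\bszero$, fix $\epsilon>0$ with $H(\bsx)=0$ whenever $\bsx^*\le\epsilon$, and when $\bsX_{1,\dhinterseq}^*>\tepseq\epsilon$ let $\sigma$ be the position of the first exceedance of the level $\tepseq\epsilon$ in the block. Decomposing according to the value of $\sigma$ (which lies in $\{1,\dots,\dhinterseq\}$ whenever $H(\tepseq^{-1}\bsX_{1,\dhinterseq})\ne0$), using the shift-invariance of $H$ to translate the block so that position $\sigma$ is sent to the origin, and then stationarity, one arrives at the exact identity
\begin{align*}
\esp[H(\tepseq^{-1}\bsX_{1,\dhinterseq})]=\sum_{l=0}^{\dhinterseq-1}\esp\Bigl[H(\tepseq^{-1}\bsX_{-l,\dhinterseq-1-l})\,\ind{\norm{\bsX_0}>\tepseq\epsilon}\,\ind{\max_{-l\le j\le-1}\norm{\bsX_j}\le\tepseq\epsilon}\Bigr]\;,
\end{align*}
the $l$-th summand arising from $\{\sigma=l+1\}$. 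Set $\bar G(\bsz)=H(\bsz)\,\ind{\norm{\bsz_0}>\epsilon}\,\ind{\sup_{j\le-1}\norm{\bsz_j}\le\epsilon}$; since the coordinates of $\tepseq^{-1}\bsX_{-l,\dhinterseq-1-l}$ indexed outside $\{-l,\dots,\dhinterseq-1-l\}$ are zero, the $l$-th summand equals $\esp[\bar G(\tepseq^{-1}\bsX_{-l,\dhinterseq-1-l})]$, and, as $\bar G(\bsz)=0$ unless $\norm{\bsz_0}>\epsilon$, this is $\pr(\norm{\bsX_0}>\tepseq\epsilon)\,\esp[\bar G(\tepseq^{-1}\bsX_{-l,\dhinterseq-1-l})\mid\norm{\bsX_0}>\tepseq\epsilon]$. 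Dividing by $\dhinterseq\pr(\norm{\bsX_0}>\tepseq)$,
\begin{align*}
\tailmeasurestar_{n,\dhinterseq}(H)=\frac{\pr(\norm{\bsX_0}>\tepseq\epsilon)}{\pr(\norm{\bsX_0}>\tepseq)}\cdot\frac1\dhinterseq\sum_{l=0}^{\dhinterseq-1}\esp\bigl[\bar G(\tepseq^{-1}\bsX_{-l,\dhinterseq-1-l})\mid\norm{\bsX_0}>\tepseq\epsilon\bigr]\;,
\end{align*}
where the prefactor tends to $\epsilon^{-\alpha}$ by regular variation.

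Next I would handle the Ces\`aro average. The functional $\bar G$ is bounded, and its set of discontinuity points for the uniform metric on $\lzero(\Rset^d)$ is contained in $\disc(H)\cup\{\bsz:\norm{\bsz_0}=\epsilon\}\cup\{\bsz:\sup_{j\le-1}\norm{\bsz_j}=\epsilon\}$, a set of probability zero under $\epsilon\bsY$: the first piece because $H\in\mcl$, the last two because $\norm{\bsY_j}=\norm{\bsY_0}\,\norm{\bsTheta_j}$ is atomless for every $j$ ($\norm{\bsY_0}$ being Pareto and independent of $\bsTheta$). Consequently $\bsw\mapsto\bar G(\epsilon\bsw)$ belongs to $\mcl$, and applying \Cref{lem:tailprocesstozero} to it with $\tepseq$ replaced throughout by $\tepseq\epsilon$ — legitimate, since \ref{eq:rnbarFun0} and Condition~\ref{eq:conditiondh} remain valid with $\tepseq\epsilon$ in place of $\tepseq$ — and running its proof (truncate the window to $\{-k,\dots,k\}$ with error controlled by \ref{eq:conditiondh}; use that, given $\norm{\bsX_0}>\tepseq\epsilon$, $\tepseq^{-1}\bsX_{-k,k}$ converges in distribution to $\epsilon\bsY_{-k,k}$; apply the continuous mapping theorem) also for the possibly asymmetric windows $\bsX_{-l,\dhinterseq-1-l}$, one gets
\begin{align*}
\esp\bigl[\bar G(\tepseq^{-1}\bsX_{-l,\dhinterseq-1-l})\mid\norm{\bsX_0}>\tepseq\epsilon\bigr]\longrightarrow\esp[\bar G(\epsilon\bsY)]\qquad(n\to\infty)
\end{align*}
\emph{uniformly} over the $l$ with $\min(l,\dhinterseq-1-l)\ge k$, for each fixed $k$. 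The at most $2k$ remaining (boundary) windows contribute at most $2k\supnorm{H}/\dhinterseq\to0$; letting first $n\to\infty$ and then $k\to\infty$, the Ces\`aro average tends to $\esp[\bar G(\epsilon\bsY)]$.

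It remains to identify $\esp[\bar G(\epsilon\bsY)]$ with $\epsilon^{\alpha}\tailmeasurestar(H)$. Because $\norm{\bsY_0}$ is Pareto on $[1,\infty)$ we have $\norm{\bsY_0}>1$ almost surely, so $\bar G(\epsilon\bsY)=H(\epsilon\bsY)\,\ind{\sup_{j\le-1}\norm{\bsY_j}\le1}$ a.s., and on $\{\sup_{j\le-1}\norm{\bsY_j}\le1\}$ the origin is the first index $j$ with $\norm{\bsY_j}>1$. Since $\bsy\mapsto\min\{j:\norm{\bsy_j}>1\}$ is an anchoring map on $\{\bsy:\bsy^*>1\}$ — which contains the support of $\bsy\mapsto H(\epsilon\bsy)$ — and since in \eqref{eq:relation-cluster-Y-Q-Theta-epsilon} the infargmax anchor $\anchor_0$ may be replaced by any anchoring map, we obtain $\esp[H(\epsilon\bsY)\,\ind{\sup_{j\le-1}\norm{\bsY_j}\le1}]=\epsilon^{\alpha}\tailmeasurestar(H)$. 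Combining the three displays, $\tailmeasurestar_{n,\dhinterseq}(H)\to\epsilon^{-\alpha}\cdot\epsilon^{\alpha}\tailmeasurestar(H)=\tailmeasurestar(H)$ for every $H\in\mca$, which is the asserted convergence.

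The substantive obstacle is the second step: one needs the convergence of the anchored conditional expectation not merely for a single window but uniformly over the $\dhinterseq$ windows of length $\dhinterseq$ straddling the origin, which is exactly what forces the truncation to $\{-k,\dots,k\}$ via Condition~\ref{eq:conditiondh} together with the boundary/central split. A smaller but essential point is the choice of the first-exceedance anchor rather than the infargmax $\anchor_0$: the map $\anchor_0$ is discontinuous precisely at sequences whose maximum is attained at more than one index, an event of positive probability for the tail process in general (for instance, for moving averages with equal coefficients), whereas the first-exceedance anchor at a fixed level has an $\epsilon\bsY$-null discontinuity set thanks to the atomlessness of the marginals of $\bsY$. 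The remaining ingredients — the index bookkeeping in the shift--stationarity manipulation, and the use of \ref{eq:rnbarFun0} and \ref{eq:conditiondh} at the level $\tepseq\epsilon$ — are routine.
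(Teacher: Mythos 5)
Your proposal is correct and follows essentially the same route as the paper's proof: decompose $\esp[H(\tepseq^{-1}\bsX_{1,\dhinterseq})]$ according to the first exceedance of the level $\tepseq\epsilon$, use shift-invariance and stationarity to recenter each term at the origin, pull out the factor $\pr(\norm{\bsX_0}>\tepseq\epsilon)/\pr(\norm{\bsX_0}>\tepseq)\to\epsilon^{-\alpha}$ by regular variation, pass to the limit in the Ces\`aro average via \Cref{lem:tailprocesstozero} together with boundedness (the paper phrases this as $\int_0^1 g_n(v)\,\rmd v$ plus dominated convergence rather than your uniform-over-central-windows split, but the content is identical), and identify the limit $\epsilon^{-\alpha}\esp[H(\epsilon\bsY)\ind{\bsY_{-\infty,-1}^*\leq1}]$ with $\tailmeasurestar(H)$ via \eqref{eq:relation-cluster-Y-Q-Theta-epsilon} and the interchangeability of anchoring maps. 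Your extra care about the a.s.\ continuity of the first-exceedance functional under the law of $\epsilon\bsY$ is a point the paper leaves implicit in the definition of the class $\mcl$, but it is not a different argument.
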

The immediate consequence is the following limit (cf. \eqref{eq:canditheta-anchor}):
  \begin{align*}
  %\label{eq:extremal-index}
    \lim_{n\to\infty} \frac{\pr(\bsX_{1,\dhinterseq}^*>\tepseq)}{\dhinterseq\pr(\norm{\bsX_0}>\tepseq)} = \canditheta  \; .
  \end{align*}
Since $H_2,H_3\in\mca$ (cf. \eqref{eq:h2}-\eqref{eq:h3}), we can introduce the following cluster indices.
\begin{example}[Cluster size distribution]
  \label{xmpl:clustersizedistribution}
  If~\ref{eq:conditiondh}
  holds, \Cref{theo:cluster-RV} yields
  \begin{multline*}
    \lim_{n\to\infty} \pr\left( \sum_{j=1}^{\dhinterseq}
    \ind{\norm{\bsX_j}>\tepseq}=m \mid \bsX_{1,\dhinterseq}^* > \tepseq \vphantom{\sum_{i=1}^{\dhinterseq}} \right) \\
     = \pr\left(\sum_{j\in\Zset} \ind{\norm{\bsY_j}>1} =m\mid \bsY_{-\infty,-1}^* \leq 1 \right)=:\pi(m) \; .
   \end{multline*}
   %It can be verified that $\sum_{m=1}^\infty m\pi(m)=\canditheta^{-1}$.
  \exampleend
\end{example}

\begin{example}[Stop-loss index]
  \label{xmpl:stop-loss}
  Consider a univariate time series.
   Define the {stop-loss index}:
  \begin{align*}
    \stoploss(\eta)=& \lim_{n\to\infty} \frac{\pr\left( \sum_{j=1}^{\dhinterseq}(X_j-\tepseq)_+ >
                \eta\tepseq \right)} {\dhinterseq\pr(X_0>\tepseq)}  = \pr\left( \sum_{j=0}^\infty(Y_j-1)_+ > \eta ,\bsY_{-\infty,-1}^*\leq 1\right) \;.
  \end{align*}
  This index seems to be new.
  \exampleend
\end{example}

\subsection{Indicator functionals not vanishing around zero}\label{sec:extension-1}
\Cref{theo:cluster-RV} entails convergence of $\tailmeasurestar_{n,\dhinterseq}(H)$ for
$H\in\mca$.
For functionals which are not
defined on the whole space $\lzero(\Rset^d)$, such as $H_4$ and $H_5$ from \eqref{eq:h4}-\eqref{eq:h5}, we need an additional assumption on Asymptotic Negligibility of Small Jumps.
% Definition 6.2.3
\begin{definition}
  \label{def:AN-small}
  Condition~\ref{eq:AN-small} holds if for all
  $\eta>0$, \index{$\ANSJB(\dhinterseq,\tepseq)$}
  \begin{align}
    \lim_{\epsilon\to0} \limsup_{n\to\infty}
    \frac{\pr(\sum_{j=1}^{\dhinterseq} |\bsX_j| \ind{|\bsX_j|\leq\epsilon \tepseq}>\eta \tepseq)}
    {\dhinterseq\pr(|\bsX_0|>\tepseq)} = 0 \; . \tag{$\ANSJB(\dhinterseq,\tepseq)$} \label{eq:AN-small}
  \end{align}
\end{definition}
The proofs of the next two results are given in \Cref{sec:proofs}.
\begin{lemma}
  \label{lem:summability-Q}
  If \ref{eq:conditiondh} and \ref{eq:AN-small} hold, then
  \begin{align*}
    %\label{eq:limit-sumQ}
    \lim_{n\to\infty} \frac{\pr(\sum_{i=1}^{\dhinterseq} \norm{\bsX_j}>\tepseq )} {\dhinterseq\pr(\norm{\bsX_0}>\tepseq )}
    =   \esp\left[ \left(\sum_{j\in\Zset} |\bsQ_j|\right)^\alpha \right]<\infty  \; .
  \end{align*}
\end{lemma}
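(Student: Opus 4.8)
The plan is to recognise the left‑hand side as $\tailmeasurestar_{n,\dhinterseq}(\ind{K>1})$ for the functional $K(\bsx)=\sum_{j\in\Zset}\norm{\bsx_j}\in[0,\infty]$, which is Lipschitz on $\lone(\Rset^d)$ (so $K\in\mck$ and $\ind{K>1}\in\mcb$) but has support reaching $\bszero$, whence $\ind{K>1}\notin\mca$ and \Cref{theo:cluster-RV} does not apply directly. I would therefore sandwich $\ind{K>1}$ between truncated functionals that \emph{do} belong to $\mca$, absorbing the truncation error by \ref{eq:AN-small}. For $\epsilon>0$ set $K^{>\epsilon}(\bsx)=\sum_{j\in\Zset}\norm{\bsx_j}\ind{\norm{\bsx_j}>\epsilon}$ and $K^{\leq\epsilon}=K-K^{>\epsilon}$. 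As a locally finite sum, $K^{>\epsilon}$ is continuous on $\lzero(\Rset^d)$ off the set $\{\bsx:\norm{\bsx_j}=\epsilon\text{ for some }j\}$; since the radial part $\norm{\bsY_0}$ of the tail process has a continuous Pareto law and is independent of $\bsTheta$, one gets $\pr(\norm{\bsY_j}=\epsilon\text{ for some }j)=0$ and $\pr(K^{>\epsilon}(\bsY)=c)=0$ for all $\epsilon,c>0$, so $\ind{K^{>\epsilon}>c}$ is bounded, shift‑invariant, supported in $\{\bsx^*>\epsilon\}$ and almost surely continuous with respect to the law of $\bsY$; hence $\ind{K^{>\epsilon}>c}\in\mca$ for all $\epsilon,c>0$. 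For $0<\delta<1$ the inclusions
\[
\{K^{>\epsilon}>1\}\ \subseteq\ \{K>1\}\ \subseteq\ \{K^{>\epsilon}>1-\delta\}\cup\{K^{\leq\epsilon}>\delta\},
\]
positivity of $\tailmeasurestar_{n,\dhinterseq}$, and the fact that $\tailmeasurestar_{n,\dhinterseq}(\ind{K^{\leq\epsilon}>\delta})$ is precisely the ratio in \ref{eq:AN-small} with $\eta=\delta$, give
\[
\tailmeasurestar_{n,\dhinterseq}(\ind{K^{>\epsilon}>1})\ \leq\ \tailmeasurestar_{n,\dhinterseq}(\ind{K>1})\ \leq\ \tailmeasurestar_{n,\dhinterseq}(\ind{K^{>\epsilon}>1-\delta})+\tailmeasurestar_{n,\dhinterseq}(\ind{K^{\leq\epsilon}>\delta}).
\]

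I would then pass to the limit $n\to\infty$ (applying \Cref{theo:cluster-RV}, valid under the standing Condition~\ref{eq:conditiondh}, to the two $\mca$‑terms) and afterwards let $\epsilon\downarrow0$. Since $K^{>\epsilon}\uparrow K$ pointwise as $\epsilon\downarrow0$, monotone convergence for $\tailmeasurestar$ gives $\tailmeasurestar(\ind{K^{>\epsilon}>c})\uparrow\tailmeasurestar(\ind{K>c})$ (possibly $+\infty$), and the left inequality yields $\liminf_n\tailmeasurestar_{n,\dhinterseq}(\ind{K>1})\geq\tailmeasurestar(\ind{K>1})$, a bound valid in $[0,\infty]$. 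In the right inequality, $\tailmeasurestar(\ind{K^{>\epsilon}>1-\delta})\leq\tailmeasurestar(\ind{\bsx^*>\epsilon})=\epsilon^{-\alpha}\canditheta<\infty$, while \ref{eq:AN-small} gives $\lim_{\epsilon\to0}\limsup_n\tailmeasurestar_{n,\dhinterseq}(\ind{K^{\leq\epsilon}>\delta})=0$; fixing $\epsilon$ small then forces $\limsup_n\tailmeasurestar_{n,\dhinterseq}(\ind{K>1})<\infty$, and combined with the lower bound this already establishes $\tailmeasurestar(\ind{K>1})<\infty$.

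With finiteness in hand, monotone convergence gives $\tailmeasurestar(\ind{K^{>\epsilon}>1-\delta})\uparrow\tailmeasurestar(\ind{K>1-\delta})<\infty$ as $\epsilon\downarrow0$, so taking the infimum over $\epsilon>0$ in the right inequality (first term increasing to a finite limit, second vanishing) yields $\limsup_n\tailmeasurestar_{n,\dhinterseq}(\ind{K>1})\leq\tailmeasurestar(\ind{K>1-\delta})$. By $\alpha$‑homogeneity of $\tailmeasurestar$ and $1$‑homogeneity of $K$, $\tailmeasurestar(\ind{K>1-\delta})=(1-\delta)^{-\alpha}\tailmeasurestar(\ind{K>1})$, which decreases to $\tailmeasurestar(\ind{K>1})$ as $\delta\downarrow0$; hence $\limsup_n\tailmeasurestar_{n,\dhinterseq}(\ind{K>1})\leq\tailmeasurestar(\ind{K>1})$. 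Together with the lower bound this gives $\lim_n\tailmeasurestar_{n,\dhinterseq}(\ind{K>1})=\tailmeasurestar(\ind{K>1})$, and \eqref{eq:seq-Q} together with Fubini's theorem identify this limit with the finite right‑hand side of the assertion, completing the proof.

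The step I expect to be the main obstacle is the orchestration of the nested limits ($n\to\infty$, then $\epsilon\downarrow0$, then $\delta\downarrow0$) under the a priori possibility $\tailmeasurestar(\ind{K>1})=\infty$: $\alpha$‑homogeneity may not be invoked to remove the $\delta$‑slack before finiteness is known, so the argument must first extract finiteness from the crude upper bound and only afterwards sharpen it. A subsidiary technical point is the membership $\ind{K^{>\epsilon}>c}\in\mca$ --- it is exactly the truncation at level $\epsilon$ that removes the discontinuity of $\bsx\mapsto\ind{\sum_j\norm{\bsx_j}>c}$ at sequences with vanishing but non‑summable entries, which is precisely why $\ind{K>1}$ itself fails to lie in $\mca$, while continuity of the Pareto radial component of $\bsY$ disposes of the remaining exceptional levels.
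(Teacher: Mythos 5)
Your proof is correct and follows essentially the same route as the paper's: truncate the sum at level $\epsilon\tepseq$, apply \Cref{theo:cluster-RV} to the resulting $\mca$-functionals, control the discarded small jumps by \ref{eq:AN-small}, and pass to the limit by monotone convergence, with finiteness extracted from the inclusion $\{K^{>\epsilon}>c\}\subseteq\{\bsx^*>\epsilon\}$ (the paper's ``$|\bsQ_j|\leq 1$'' observation in disguise). Your explicit orchestration of the nested limits and the use of $\alpha$-homogeneity to remove the $\delta$-slack are if anything more careful than the paper's treatment, but the underlying argument is the same.
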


\begin{proposition}
  \label{theo:limit-lone-Q}
  Assume that \ref{eq:conditiondh} and \ref{eq:AN-small} hold.   Then for $K\in{\mathcal K}$,
  \begin{align*}
   \tailmeasurestar(\ind{K>1})= \lim_{n\to\infty} \frac{\pr(K(\bsX_{1,\dhinterseq}/\tepseq )>1)} {\dhinterseq\pr(\norm{\bsX_0}>\tepseq )} = \canditheta
    \int_0^\infty \pr(K(z\bsQ)>1) \alpha z^{-\alpha-1} \rmd z  < \infty \; .  %\label{eq:identity-K-alpha>1}
  \end{align*}
\end{proposition}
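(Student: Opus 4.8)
The plan is to deduce the result from \Cref{theo:cluster-RV} by a truncation argument, using \ref{eq:AN-small} to control the small-jump part and \Cref{lem:summability-Q} (together with \eqref{eq:seq-Q}) to identify and bound the limit. For $\epsilon>0$ write $\bsX_{1,\dhinterseq}^{(\epsilon)}$ for the vector obtained from $\bsX_{1,\dhinterseq}$ by setting to zero every coordinate $\bsX_j$ with $\norm{\bsX_j}\leq\epsilon\tepseq$. Since $K$ is shift-invariant, vanishes at $\bszero$, and is Lipschitz in the sense of \eqref{eq:hypo-K}, the functional $H^{(\epsilon)}:=\ind{K(\,\cdot\,/\epsilon\cdot\text{-thresholded})>1}$ is, for each fixed $\epsilon$, a shift-invariant functional with support separated from $\bszero$ (a coordinate of norm $\leq\epsilon$ cannot by itself push $K$ above $1$ once $\epsilon$ is small relative to $1/L_K$, so after rescaling the relevant configurations are bounded away from $\bszero$); hence $H^{(\epsilon)}\in\mca$ and \Cref{theo:cluster-RV} applies to it.

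First I would show that replacing $K(\bsX_{1,\dhinterseq}/\tepseq)$ by its thresholded version is asymptotically negligible. By the Lipschitz bound \eqref{eq:hypo-K},
\begin{align*}
\bigl|K(\bsX_{1,\dhinterseq}/\tepseq) - K(\bsX_{1,\dhinterseq}^{(\epsilon)}/\tepseq)\bigr|
\leq L_K \sum_{j=1}^{\dhinterseq} \frac{\norm{\bsX_j}}{\tepseq}\ind{\norm{\bsX_j}\leq\epsilon\tepseq}\;,
\end{align*}
so the event that the two indicators $\ind{K(\bsX_{1,\dhinterseq}/\tepseq)>1}$ and $\ind{K(\bsX_{1,\dhinterseq}^{(\epsilon)}/\tepseq)>1}$ differ is contained, up to sets of the form $\{|K(\bsX_{1,\dhinterseq}^{(\epsilon)}/\tepseq)-1|\leq\delta\}$, in $\{L_K\sum_{j=1}^{\dhinterseq}\norm{\bsX_j}\ind{\norm{\bsX_j}\leq\epsilon\tepseq}>\delta\tepseq\}$; dividing by $\dhinterseq\pr(\norm{\bsX_0}>\tepseq)$, letting $n\to\infty$ and then $\epsilon\to0$, condition \ref{eq:AN-small} kills this term. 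One must also argue that the "boundary" contribution $\{|K(\bsX_{1,\dhinterseq}^{(\epsilon)}/\tepseq)-1|\leq\delta\}$ is negligible as $\delta\to0$, which follows from \Cref{theo:cluster-RV} applied to the $\mca$-functional $\ind{1-\delta<K(\cdot)\leq1+\delta}$ together with continuity in $z$ of $z\mapsto\pr(K(z\bsQ)=1)$ for $\tailmeasurestar$-a.e.\ level (a.e.-continuity of the limiting distribution function of $K(z\bsQ)$), using that the distribution of $K$ under the cluster measure has at most countably many atoms.

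Next I would pass to the limit in the thresholded quantity: \Cref{theo:cluster-RV} gives $\lim_n \tailmeasurestar_{n,\dhinterseq}(H^{(\epsilon)}) = \tailmeasurestar(H^{(\epsilon)})$, and by \eqref{eq:seq-Q} this equals $\canditheta\int_0^\infty\pr(K(z\bsQ^{(\epsilon)})>1)\,\alpha z^{-\alpha-1}\,\rmd z$ where $\bsQ^{(\epsilon)}$ is $\bsQ$ with coordinates of norm $\leq\epsilon$ zeroed out. Letting $\epsilon\to0$: by \Cref{lem:summability-Q} we have $\sum_{j\in\Zset}\norm{\bsQ_j}<\infty$ a.s.\ with $\esp[(\sum_j\norm{\bsQ_j})^\alpha]<\infty$, so $\norm{K(z\bsQ^{(\epsilon)})-K(z\bsQ)}\leq L_K z\sum_j\norm{\bsQ_j}\ind{\norm{\bsQ_j}\leq\epsilon}\to0$ a.s., hence $\pr(K(z\bsQ^{(\epsilon)})>1)\to\pr(K(z\bsQ)>1)$ for every $z$ at which $z\mapsto\pr(K(z\bsQ)>1)$ is continuous, i.e.\ for a.e.\ $z$; dominated convergence in $z$ (dominate $\pr(K(z\bsQ)>1)\leq\min(1, (L_K z)^\alpha\esp[(\sum_j\norm{\bsQ_j})^\alpha])$ by Markov, which is integrable against $\alpha z^{-\alpha-1}\rmd z$ near $0$ and against the tail near $\infty$) gives convergence of the integrals. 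Finiteness of the limit is then exactly $\canditheta\int_0^\infty\min(1,(L_Kz)^\alpha\esp[(\sum_j\norm{\bsQ_j})^\alpha])\,\alpha z^{-\alpha-1}\rmd z<\infty$.

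The main obstacle is the interchange of limits (in $n$ and in $\epsilon$) together with the boundary-continuity issue: one needs that the errors introduced by truncation vanish \emph{uniformly enough in $n$} to commute with $n\to\infty$, which is precisely what \ref{eq:AN-small} is designed to provide, and one needs the limiting law of $K$ under the cluster measure to have no atom at the threshold value $1$ (or to handle the countably many bad scales $z$), so that the indicator functionals to which \Cref{theo:cluster-RV} is applied are continuity sets. Everything else is a routine Lipschitz-truncation and dominated-convergence bookkeeping.
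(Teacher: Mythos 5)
Your proposal is correct and follows essentially the same route as the paper: truncate the small coordinates via \ref{eq:AN-small} and the Lipschitz bound \eqref{eq:hypo-K}, apply \Cref{theo:cluster-RV} to the truncated indicator (which lies in $\mca$), and remove the truncation by dominated convergence using the integrability supplied by \Cref{lem:summability-Q}. The only difference is cosmetic: the paper handles the discontinuity of the indicator at level $1$ by sandwiching between the thresholds $1-\eta$ and $1+\eta$ and letting $\eta\to0$, whereas you isolate the boundary set $\{|K^{(\epsilon)}-1|\leq\delta\}$ and argue its cluster measure is negligible---both devices address the same atom-at-level-$1$ issue.
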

If $K$ is a 1-homogeneous
satisfying the assumptions of \Cref{theo:limit-lone-Q}, then
\begin{align*}
  \tailmeasurestar(\ind{K>1})
  &   = \canditheta\esp[K_+^\alpha(\bsQ)]
    = \esp[K_+^\alpha(\bsTheta_{0,\infty})-K_+^\alpha(\bsTheta_{1,\infty})] \; .  %\label{eq:limit-K-alpha>1-2}
\end{align*}
\begin{example}[Large deviations index]
\label{xmpl:large-deviation}
Let $\sequence{X}$ be an univariate time series. The functional $H_4$ defined in \eqref{eq:h4}
  yields the large deviations index:
  \begin{align*}
    \largedeviation
    & = \lim_{n\to\infty} \frac{\pr\left(\left(\sum_{j=1}^{\dhinterseq} X_j\right)_+>\tepseq \right)}{\dhinterseq\pr(\norm{X_0}>\tepseq )}
      = \esp \left[ \left(\sum_{j=0}^\infty \Theta_j\right)_+^{\alpha} -       \left(\sum_{j=1}^\infty \Theta_j\right)_+^{\alpha} \right] \; .
      %\label{eq:clusterindex}
  \end{align*}
  The index $\largedeviation$, under the name \textit{cluster index}, was introduced in \cite{mikosch:wintenberger:2016}.
  \exampleend
\end{example}

\begin{example}[Ruin index]
  \label{xmpl:ruine-cluster-index}
  Take $H_5$ defined in \eqref{eq:h5}.
  \Cref{theo:limit-lone-Q} gives
  \begin{align*}
    \ruin=\lim_{n\to\infty} \frac{\pr(\max_{1\leq j\leq \dhinterseq} \sum_{i=1}^j X_i>\tepseq )}{\dhinterseq\pr(\norm{X_0}>\tepseq )}
    = \canditheta  \esp \left[ \sup_{i\in\Zset} \left(\sum_{j\leq i} Q_j\right)_+^\alpha \right] \; .   %\label{eq:ruineindex}
  \end{align*}
  \exampleend
\end{example}

%%%%%%%%%%%%%%%%%%%%%%%%%%%%%%%%%%%%%%%%%%%%%%%%%%%%%%%%%%
%%%%%%%%%%%%%%%%%%%%%%%%%%%%%%%%%%%%%%%%%%%%%%%%%%%%%%%%%%%%%%

\section{Central limit theorem for blocks estimators}\label{sec:main-results}
\subsection{Sliding blocks estimators}
Let $q_n=n-\dhinterseq+1$. Thanks to \Cref{theo:cluster-RV} and  \Cref{theo:limit-lone-Q}, we have for $H\in\mca\cup\mcb$,
\begin{align*}
%\label{eq:convergence-of-the-mean}
&\lim_{n\to\infty}\esp\left[\frac{\sum_{i=0}^{q_n-1}H\left(\bsX_{i+1,i+\dhinterseq}/\tepseq\right)}{q_n \dhinterseq \pr(\norm{\bsX_0}>\tepseq)}\right]=\lim_{n\to\infty}\frac{\esp\left[H\left(\bsX_{1,\dhinterseq}/\tepseq\right)\right]}{\dhinterseq \pr(\norm{\bsX_0}>\tepseq)}=
\lim_{n\to\infty}\tailmeasurestar_{n,\dhinterseq}(H)=\tailmeasurestar(H)\;.
\end{align*}
This indicates that a consistent pseudo-estimator of $\tailmeasurestar(H)$ can be defined as
\begin{align}\label{eq:sliding-block-estimator-nonfeasible}
\tedclustersl(H):=\frac{1}{q_n \dhinterseq \pr(\norm{\bsX_0}>\tepseq)}\sum_{i=0}^{q_n-1}H\left(\bsX_{i+1,i+\dhinterseq}/\tepseq\right)\;.
\end{align}
The above estimator is not feasible, since it involves an unspecified sequence $\{\tepseq\}$ and the tail of $\norm{\bsX_0}$. Thus, in \eqref{eq:sliding-block-estimator-nonfeasible}
we replace $q_n\pr(\norm{\bsX_0}>\tepseq)$ with its empirical estimate $\sum_{j=1}^{q_n}\ind{\norm{\bsX_j}>\tepseq}$ to obtain a quasi-feasible estimator
\begin{align*}%\label{eq:sliding-block-estimator-more-feasible}
\TEDclusterrandomaltsl_{n,\dhinterseq}(H)=\frac{1}{\dhinterseq}\frac{1}{\sum_{j=1}^{q_n}\ind{\norm{\bsX_j}>\tepseq}}
\sum_{i=0}^{q_n-1}H\left(\bsX_{i+1,i+\dhinterseq}/\tepseq\right)\;.
\end{align*}
Likewise, let $\statinterseqn$ be an intermediate sequence of integers, \ie\ $\lim_{n\to\infty}\statinterseqn=\infty$, $\lim_{n\to\infty}\statinterseqn/n=0$. Define $\tepseq$ by $\statinterseqn=n\pr(\norm{\bsX_0}>\tepseq)$.
Replacing $\tepseq$ with $\orderstat[\norm{\bsX}]{n}{n-\statinterseqn}$ and noting that (assuming for simplicity that there are not ties in the data)
$$
\sum_{j=1}^{n}\ind{\norm{\bsX_j}>\orderstat[\norm{\bsX}]{n}{n-\statinterseqn}}=\statinterseqn\;,
$$
we obtain a feasible estimator of $\tailmeasurestar(H)$ given in \eqref{eq:sliding-block-estimator-feasible}.
\subsection{Weak dependence assumptions}
For asymptotic normality, we need to strengthen the anticlustering condition \ref{eq:conditiondh}.
\begin{definition}\label{def:conditionS}
Condition
  \ref{eq:conditionS} holds if
for all $s,t>0$
\begin{align}
    \label{eq:conditionS}
    \lim_{m\to\infty} \limsup_{n\to\infty} \frac{1}{\pr(\norm{\bsX_0}>\tepseq)}
    \sum_{j=m}^{\dhinterseq} \pr(\norm{\bsX_0}>\tepseq s,\norm{\bsX_j}>\tepseq t) = 0 \;
    . \tag{$\conditiondhsumpaper{\dhinterseq}{\tepseq}$}
  \end{align}
\end{definition}
This condition implies that $\sum_{j\in\Zset}\pr(\norm{\bsY_j}>1)<\infty$. The latter series appears explicitly in the statement for the limiting variance.

Dependence in $\sequence{\bsX}$ will be controlled by the $\beta$-mixing rates $\{\beta_n\}$.  Recall \ref{eq:rnbarFun0}.
Let $\{\ell_n\}$ be a sequence of integers such that $\lim_{n\to\infty}\ell_n=\infty$ and $\lim_{n\to\infty} \ell_n/\dhinterseq=0$.
\begin{definition}\label{def:condition-beta-r-l-sl}
Condition $\beta(\dhinterseq)$ holds if:
\begin{enumerate}
\item $\beta_j=O(j^{-\nu})$, $\nu>1$ and $\lim_{n\to\infty}\dhinterseq^{1+\nu}/n=+\infty$; and
\item there exists $\delta>0$ such that
$\lim_{n\to\infty}\dhinterseq^{\nu-\delta}\pr(\norm{\bsX_0}>\tepseq)=+\infty$.
\end{enumerate}
\end{definition}
From the basic assumptions on the time series, we have $\lim_{n\to\infty}\dhinterseq/n=0$. Thus, $\nu$ has to be big enough.
The above mixing condition is clearly satisfied for time series with geometric mixing rates since then $\nu$ can be chosen arbitrarily large.
\subsection{Main result}

Let $\tepcluster$ be the Gaussian process on $L^2(\tailmeasurestar)$ with covariance
\begin{align*}
   \cov(\tepcluster(H),\tepcluster(\widetilde{H})) =  \tailmeasurestar(H\widetilde{H}) \; .
 \end{align*}
Recall that for a functional $H:(\Rset^d)^\Zset\to\Rset$ and $s>0$ we define $H_s(\bsx)=H(\bsx/s)$.

The main result of this paper is \Cref{thm:sliding-block-clt-1}, the asymptotic normality of the appropriately normalized estimator
$\TEDclusterrandomsl_{n,\dhinterseq}(H)$. The limiting variance agrees with the one for the disjoint blocks estimator; cf. \cite{drees:rootzen:2010} and \cite[Chapter 10]{kulik:soulier:2020}.
\begin{theorem}\label{thm:sliding-block-clt-1}
Let $\sequence{\bsX}$ be a stationary, regularly varying $\Rset^d$-valued time series. Assume
  that~\ref{eq:rnbarFun0},
  $\beta(\dhinterseq)$, \ref{eq:conditionS} hold.
  Fix $0<s_0<1<t_0<\infty$. Let $H:(\Rset^d)^\Zset\to \Rset$ be a
    shift-invariant measurable map such that the class $\{H_s:s\in [s_0,t_0]\}$ is linearly ordered. Assume moreover that
  \begin{subequations}
    \begin{align}
      \label{eq:bias-cluster-clt-aa}
      \lim_{n\to\infty}\sqrt{\statinterseqn}  \sup_{s\in [s_0,t_0]} |\esp[\tedclustersl(\exc_s)] -\tailmeasurestar(\exc_s)|
      &  = 0 \; , \\                                                                                                                                     \label{eq:bias-cluster-clt-func}
      \lim_{n\to\infty}\sqrt{\statinterseqn}  \sup_{s\in [s_0,t_0]} |\esp[\tedclustersl(H_s)] -\tailmeasurestar(H_s)| & = 0 \; .
    \end{align}
  \end{subequations}
If $H\in\mca$, then
\begin{align}\label{eq:clt-estimator-1}
\sqrt{\statinterseqn}\left\{\TEDclusterrandomsl_{n,\dhinterseq}(H)-\tailmeasurestar(H)\right\}\convdistr
\tepcluster(H-\tailmeasurestar(H)\exc)\;.
\end{align}
If moreover \ref{eq:AN-small} is satisfied, then \eqref{eq:clt-estimator-1} holds for $H\in\mcb$.
\end{theorem}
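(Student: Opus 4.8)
The argument splits into a functional central limit theorem for the non-feasible sliding blocks cluster process indexed by the scaling parameter, and a reduction of the feasible statistic $\TEDclusterrandomsl_{n,\dhinterseq}(H)$ to that process. For a functional $G$ and $i\ge 0$ write $\xi_i(G)=G(\bsX_{i+1,i+\dhinterseq}/\tepseq)$, so that $\tedclustersl(G)=\bigl(q_n\dhinterseq\pr(\norm{\bsX_0}>\tepseq)\bigr)^{-1}\sum_{i=0}^{q_n-1}\xi_i(G)$. The core step is to show that
\begin{align*}
s\longmapsto\sqrt{\statinterseqn}\bigl(\tedclustersl(H_s)-\esp[\tedclustersl(H_s)]\bigr)\ ,\qquad s\in[s_0,t_0]\ ,
\end{align*}
and its analogue with $\exc_s$ in place of $H_s$, converge jointly and weakly in $\ell^\infty([s_0,t_0])^2$ to $\bigl(\tepcluster(H_s),\tepcluster(\exc_s)\bigr)_{s\in[s_0,t_0]}$, with $\tepcluster$ the Gaussian process of the statement; in particular the limiting covariance is exactly the one that occurs for disjoint blocks. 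Granting this, and recalling the homogeneity identities $\tailmeasurestar(G_s)=s^{-\alpha}\tailmeasurestar(G)$ and $\tailmeasurestar(\exc)=\lim_n\tailmeasurestar_{n,\dhinterseq}(\exc)=1$, the conclusion \eqref{eq:clt-estimator-1} will follow from a delta-method argument for the ratio $\tedclustersl(H_s)/\tedclustersl(\exc_s)$ together with the bias hypotheses \eqref{eq:bias-cluster-clt-aa}--\eqref{eq:bias-cluster-clt-func}.

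\textbf{The functional CLT.} For fixed $s$ I would establish asymptotic normality by the big-blocks/small-blocks method: partition $\{1,\dots,n\}$ into alternating ``big'' blocks of length $b_n$ and ``small'' separating blocks of length $\dhinterseq$, so that sliding blocks lying in distinct big blocks use disjoint coordinates, with $\dhinterseq/b_n\to 0$, $b_n/n\to 0$, and $b_n$ compatible with the rate $\beta(\dhinterseq)$. The $\beta$-mixing assumption lets one replace the big-block contributions by independent copies; the small blocks and the boundary sliding blocks (those straddling two big blocks) are negligible after multiplication by $\sqrt{\statinterseqn}$ because $\dhinterseq/b_n\to 0$ and $\dhinterseq\pr(\norm{\bsX_0}>\tepseq)\to 0$; and a Lindeberg argument for the resulting row-wise independent array, with \ref{eq:conditionS} controlling the truncation errors, yields one-dimensional convergence, the multivariate version following by the Cram\'er--Wold device. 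Stochastic equicontinuity on $[s_0,t_0]$, which upgrades this to weak convergence in $\ell^\infty$, follows from the linear-ordering hypothesis on $\{H_s\}$ (automatic for $\{\exc_s\}$): that class has polynomial bracketing numbers, so a maximal inequality for sliding-block sums in the spirit of \cite{drees:rootzen:2010} applies. The genuinely delicate point is the \emph{variance}: for $G,\widetilde G\in\{H_s,\exc_s\}$ one must show
\begin{align*}
\frac{1}{\dhinterseq^2\,\pr(\norm{\bsX_0}>\tepseq)}\sum_{|\ell|<\dhinterseq}\esp\bigl[\xi_0(G)\,\xi_\ell(\widetilde G)\bigr]\ \xrightarrow{\ n\to\infty\ }\ \tailmeasurestar(G\widetilde G)\ ,
\end{align*}
together with negligibility of the covariances between non-overlapping blocks. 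Here shift-invariance is essential: under \ref{eq:conditionS} a block with $\xi_i(G)\neq 0$ carries, asymptotically, a single cluster, and by shift-invariance that cluster contributes the \emph{same} value to $\xi_i(G)$ for roughly $\dhinterseq$ consecutive indices $i$, whatever its position inside the block; hence $\sum_{|\ell|<\dhinterseq}\xi_\ell(G)\approx\dhinterseq\,\xi_0(G)$ on $\{\xi_0(G)\neq 0\}$, so that $\sum_{|\ell|<\dhinterseq}\esp[\xi_0(G)\xi_\ell(\widetilde G)]\approx\dhinterseq\,\esp[\xi_0(G)\xi_0(\widetilde G)]\sim\dhinterseq^2\,\pr(\norm{\bsX_0}>\tepseq)\,\tailmeasurestar_{n,\dhinterseq}(G\widetilde G)$, which tends to $\dhinterseq^2\,\pr(\norm{\bsX_0}>\tepseq)\,\tailmeasurestar(G\widetilde G)$ by \Cref{theo:cluster-RV}. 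The error terms --- two clusters in a window of length $O(\dhinterseq)$, or a cluster near an edge --- are of smaller order by \ref{eq:conditionS} applied to windows whose length is a fixed multiple of $\dhinterseq$.

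\textbf{Random normalization.} Put $\hat s_n=\orderstat[\norm{\bsX}]{n}{n-\statinterseqn}/\tepseq$, so that $H(\bsX_{i+1,i+\dhinterseq}/\orderstat[\norm{\bsX}]{n}{n-\statinterseqn})=H_{\hat s_n}(\bsX_{i+1,i+\dhinterseq}/\tepseq)$ and $\hat s_n\convprob 1$. Since $\sum_{j=1}^n\ind{\norm{\bsX_j}>\orderstat[\norm{\bsX}]{n}{n-\statinterseqn}}=\statinterseqn$ and each coordinate $\bsX_j$, away from the two ends, lies in exactly $\dhinterseq$ sliding blocks, one gets $(\dhinterseq\statinterseqn)^{-1}\sum_{i=0}^{q_n-1}\exc(\bsX_{i+1,i+\dhinterseq}/\orderstat[\norm{\bsX}]{n}{n-\statinterseqn})=1+o_{\pr}(\statinterseqn^{-1/2})$, the discrepancy coming from the $O(\dhinterseq)$ boundary coordinates and being negligible because $\dhinterseq\sqrt{\statinterseqn}/n\to 0$, a consequence of \ref{eq:rnbarFun0}. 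Hence, up to an $o_{\pr}(\statinterseqn^{-1/2})$ term, $\TEDclusterrandomsl_{n,\dhinterseq}(H)=\tedclustersl(H_{\hat s_n})/\tedclustersl(\exc_{\hat s_n})$. Since $\tailmeasurestar(H_{\hat s_n})/\tailmeasurestar(\exc_{\hat s_n})=\tailmeasurestar(H)$ identically, a first-order expansion of $(a,b)\mapsto a/b$ at $\bigl(\tailmeasurestar(H_{\hat s_n}),\tailmeasurestar(\exc_{\hat s_n})\bigr)$, using $\hat s_n\convprob 1$, gives
\begin{align*}
\sqrt{\statinterseqn}\Bigl(\frac{\tedclustersl(H_{\hat s_n})}{\tedclustersl(\exc_{\hat s_n})}-\tailmeasurestar(H)\Bigr)
&=\sqrt{\statinterseqn}\bigl(\tedclustersl(H_{\hat s_n})-\tailmeasurestar(H_{\hat s_n})\bigr)\\
&\quad-\tailmeasurestar(H)\,\sqrt{\statinterseqn}\bigl(\tedclustersl(\exc_{\hat s_n})-\tailmeasurestar(\exc_{\hat s_n})\bigr)+o_{\pr}(1)\ .
\end{align*}
Splitting each term on the right into an empirical-process part and a bias part, the bias parts vanish by \eqref{eq:bias-cluster-clt-func} and \eqref{eq:bias-cluster-clt-aa} (which hold uniformly over $s\in[s_0,t_0]$), while the empirical-process parts, evaluated at the consistent sequence $\hat s_n$, converge jointly to $\tepcluster(H)-\tailmeasurestar(H)\tepcluster(\exc)=\tepcluster(H-\tailmeasurestar(H)\exc)$, using the functional CLT, the linearity of $G\mapsto\tepcluster(G)$, and the a.s.\ continuity at $s=1$ of $s\mapsto\tepcluster(G_s)$ (a consequence of $\tailmeasurestar((G_s-G_1)^2)\to 0$ as $s\to 1$). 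This proves \eqref{eq:clt-estimator-1} for $H\in\mca$.

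\textbf{Extension to $\mcb$, and the main obstacle.} For $H=\ind{K>1}\in\mcb$ with $K\in\mck$, the support of $H$ is not separated from $\bszero$, so the bound on the number of sliding blocks with $\xi_i(H)\neq 0$ in terms of exceedances of a fixed fraction of $\tepseq$ is lost. One approximates $H$ by $H^{(\epsilon)}\in\mca$ obtained by zeroing the coordinates of norm at most $\epsilon$ before applying $K$; then $\tailmeasurestar(|H-H^{(\epsilon)}|)\to 0$ as $\epsilon\to 0$ by \Cref{lem:summability-Q} and the Lipschitz bound \eqref{eq:hypo-K}, while $\sqrt{\statinterseqn}\,\tedclustersl(|H-H^{(\epsilon)}|)$ is, for small $\epsilon$, uniformly small in probability by \ref{eq:AN-small}; letting $\epsilon\to 0$ after $n\to\infty$ transfers the conclusion from $H^{(\epsilon)}$ to $H$. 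The step I expect to be hardest is the variance identity in the functional CLT: a priori the heavily overlapping sliding blocks might inflate the disjoint-blocks covariance $\tailmeasurestar(G\widetilde G)$, and making the ``one cluster counted $\dhinterseq$ times'' heuristic rigorous --- in particular the negligibility of windows carrying two clusters or a cluster near an edge --- requires a careful use of \ref{eq:conditionS} on windows of length a multiple of $\dhinterseq$, together with shift-invariance; this is exactly the mechanism by which sliding and disjoint blocks turn out to be asymptotically equivalent.
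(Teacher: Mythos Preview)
Your proposal is correct and follows the same architecture as the paper: a functional CLT for the centered sliding-blocks process $s\mapsto\sqrt{\statinterseqn}(\tedclustersl(H_s)-\tailmeasurestar(H_s))$ (finite-dimensional convergence via big/small blocks plus asymptotic equicontinuity from linear ordering), followed by a plug-in step for the random threshold. Two points of difference are worth noting. First, for the random normalization the paper does not pass through a ratio: it simply writes $\TEDclusterrandomsl_{n,\dhinterseq}(H)=\tedclustersl(H_{\zeta_n})$ with $\zeta_n=\orderstat[\norm{\bsX}]{n}{n-\statinterseqn}/\tepseq$, decomposes as $\mathbb{F}_n(H_{\zeta_n})+\sqrt{\statinterseqn}\{\tailmeasurestar(H_{\zeta_n})-\tailmeasurestar(H)\}$, and handles the second term by Vervaat's theorem (the convergence of $\mathbb{F}_n(\exc_\cdot)$ yields $\sqrt{\statinterseqn}(\zeta_n^{-\alpha}-1)\convdistr-\tepcluster(\exc)$, and homogeneity of $\tailmeasurestar$ does the rest). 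Your delta-method-on-the-ratio route reaches the same limit but is a slightly longer detour. Second, for the variance identity the paper does not argue via ``one cluster counted $\dhinterseq$ times''; instead it conditions on the location of the first exceedance in the block (exactly as in the proof of \Cref{theo:cluster-RV}) to show that for overlapping blocks with lag $h=[\xi\dhinterseq]$, $\xi\in(0,1)$, the normalized cross-moment converges to $(1-\xi)\tailmeasurestar(H\widetilde H)$, and then integrates $2\int_0^1(1-\xi)\,\rmd\xi=1$ to recover the disjoint-blocks covariance. Only \ref{eq:conditiondh} is needed for that computation; \ref{eq:conditionS} enters through the Lindeberg step for the unbounded functional $\exc$ and the finiteness of $\sum_{j\in\Zset}\pr(\norm{\bsY_j}>1)$, not through the ``two-clusters-in-one-window'' control you sketch.
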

\begin{remark}\label{rem:limiting-variance}
  The limiting distribution is centered Gaussian with variance (cf. \Cref{exer:expression-tailmeasurestar-HH'}):% of the supplemental file):
  \begin{align*}
    \tailmeasurestar& (\{H-\tailmeasurestar(H)\exc\}^2) = \tailmeasurestar(H^2) - 2\tailmeasurestar(H)\tailmeasurestar(H\exc) + (\tailmeasurestar(H))^2 \tailmeasurestar(\exc^2) \\
    & =  \tailmeasurestar(H^2) - 2\tailmeasurestar(H)\esp[H(\bsY)] + (\tailmeasurestar(H))^2 \sum_{j\in\Zset} \pr(\norm{\bsY_j}>1) \; .
  \end{align*}
   \remarkend
\end{remark}
\subsection{Examples}
\begin{example}[Extremal index]
  \label{xmpl:blocks-extremal_index}
  For $H(\bsx)=\ind{\bsx^*>1}$ we have $\tailmeasurestar(H)=\tailmeasurestar(H^2)=\canditheta$.
  The data-based estimator \eqref{eq:sliding-block-estimator-feasible}
  is asymptotically normal with mean zero and the limiting variance is
  \begin{align*}
    \tailmeasurestar(\{H-\tailmeasurestar(H)\exc\}^2)
    = \canditheta^2 \sum_{j\in\Zset}\pr(Y_j>1)-\canditheta\;.
  \end{align*}
See \Cref{sec:comments} for a discussion on the existing results.
   \exampleend
\end{example}

\begin{example}[Cluster size distribution]
  \label{xmpl:clt-cluster-size-dist}
  Consider the situation from \Cref{xmpl:clustersizedistribution}.
 The limiting distribution is centered normal with the variance
  \begin{align*}
    \pi(m)\left( 1 - 2\pr(\exc(\bsY)=m) \right)  + \pi^2(m) \sum_{j\in\Zset} \pr(Y_j>1) \; .
  \end{align*}
    \exampleend
\end{example}
\begin{example}[Stop-loss index]
  \label{xmpl:clt-stoploss}
  Consider the stop-loss index $\stoploss(\eta)$
  introduced in \Cref{xmpl:stop-loss}.
  The limiting distribution is centered normal with the variance
  \begin{align*}
    \stoploss(\eta)  \left(1- 2 \pr\left(\sum\nolimits_{j\in\Zset}(Y_j-1)_+>\eta\right)\right)  + \stoploss^2(\eta)\sum_{j\in\Zset} \pr(Y_j>1) \; .
  \end{align*}
 \exampleend
\end{example}
\begin{example}[Large deviations index]
  \label{xmpl:large-deviation-clt}
  We continue with the situation from
  \Cref{xmpl:large-deviation}.
  The limiting distribution is centered Gaussian with  variance
  \begin{align*}
    \largedeviation  -2 \largedeviation\pr\left(\left(\sum_{j\in\Zset} Y_j\right)_+>1\right)
    + \largedeviation^2 \sum_{j\in\Zset} \pr(\norm{Y_j}>1) \; .
 \end{align*}
  \exampleend
\end{example}

\section{Comments and extensions}\label{sec:comments}
\subsection{Non-linearly ordered function classes}\label{sec:nonlinear}
The linear ordering in \Cref{thm:sliding-block-clt-1} seems to be quite restrictive.
It can be replaced with an assumption that the function class is of VC-type (see \cite{drees:rootzen:2010}), or can be approximated by VC-classes. See \cite[Lemma A.3]{bilayi:kulik:soulier::2018}, \cite{drees:knezevic:2019} and \cite[Appendix C.4]{kulik:soulier:2020}.

\subsection{Existing results}
We discuss the existing results.
For the sake of clarify, we consider univariate, \nonnegative, regularly varying time series with the marginal distribution $F$. \

\paragraph{PoT approach.} In \cite{drees:neblung:2020} the authors study asymptotic normality of the sliding blocks estimators in a general setting. They show that the limiting variance of such estimators does not exceed the one for the disjoint blocks estimators. For the extremal index they found the variances to be equal. As in this paper,
they use the threshold $\tepseq$ such as in \ref{eq:rnbarFun0}.  The results in \cite{drees:rootzen:2010} and \cite[Chapters 9-10]{kulik:soulier:2020} (disjoint blocks) as well as in \cite{drees:neblung:2020} and in the current paper fit into Peak Over Threshold (PoT) framework.

In particular, consider the disjoint blocks estimator of the extremal index,
\begin{align*}%\label{eq:extremal-index-block-estimator}
\widetilde{\canditheta}_{n,1}=\widetilde{\canditheta}_{n,1}(x)=\frac{\sum_{i=1}^{m_n} \ind{\bsX_{(i-1)\dhinterseq+1,i\dhinterseq}^*>x}}{\sum_{j=1}^{m_n\dhinterseq}\ind{X_j>x}}\;.
\end{align*}
In \cite[Example 10.4.2]{kulik:soulier:2020} we calculated the limiting variance of $\widetilde{\canditheta}_{n,1}(\tepseq)$ to be $\sigma_1^2=-\canditheta+\canditheta^2\sum_{j\in\Zset}\pr(Y_j>1)$. This is in agreement with
Corollary 4.6 in~\cite{hsing:1991} (where the variance $\sigma_1^2$ is given in a complicated form).
We can see that $\sigma_1^2$ agrees with the one limiting variance for the sliding blocks estimator in \Cref{xmpl:blocks-extremal_index}. The blocks
estimator $\widetilde{\canditheta}_{n,1}(\tepseq)$ is also considered in \cite{smith:weissman:1994} and \cite{weissman:novak:1998}.

\paragraph{Block maxima framework.}
One can also use the threshold $c_{\dhinterseq}$ given by \begin{align}\label{eq:threshold-2}
r_n \tail{F}(c_{r_n})\to 1\;.
\end{align}

We are not aware of the asymptotic theory for $\widetilde{\canditheta}_{n,1}(c_{\dhinterseq})$. However, using \cite[Theorem 4.2]{robert:segers:ferro:2009} and the delta method we can compare the variances of $\widetilde{\canditheta}_{n,1}(\tepseq)$ and $\widetilde{\canditheta}_{n,1}(c_{\dhinterseq})$:
\begin{align*}
\sigma_1^2=-\canditheta+\canditheta^2\sum_{j\in\Zset}\pr(Y_j>1) \ \ \mbox{\rm vs. }
\sigma_3^2:=\rme^{-\canditheta}(1-\rme^{-\canditheta})-2\canditheta\rme^{-\canditheta}+ \canditheta^2\sum_{j\in\Zset}\pr({Y_j}>1)\;.
\end{align*}
Thus, the estimator $\widetilde{\canditheta}_{n,1}(\tepseq)$ has a smaller variance than $\widetilde{\canditheta}_{n,1}(c_{r_n})$.

In the following discussion, we will use the threshold \eqref{eq:threshold-2}.
In \cite{robert:segers:ferro:2009} the authors consider another disjoint blocks estimator of the extremal index, motivated by
the approximation $\log(1-x)\sim x$ ($x\to 0$).
Also, the corresponding sliding blocks estimator is considered. It is shown that the sliding blocks one yields a smaller asymptotic variance.

In \cite{bucher:segers:2018disjoint,bucher:segers:2018sliding} the authors estimate the parameters $(\alpha,\sigma)$ of the Fr\'{e}chet distribution stemming from the limiting behaviour of the maxima. Disjoint blocks yield a larger variance than sliding blocks.
Similarly, in \cite{berghaus:bucher:2018} the authors use the blocking method to estimate the extremal index and again the sliding block estimator is more efficient.

The estimator $\widetilde{\canditheta}_{n,1}(c_{\dhinterseq})$ as well as the ones in
\cite{robert:segers:ferro:2009} and \cite{bucher:segers:2018disjoint,bucher:segers:2018sliding} can be thought of as the application of the block maxima method. Indeed, the threshold $c_{\dhinterseq}$ is the normalizing sequence for the limiting distribution of maxima. In the context of the latter two papers, $\bsX_{1,\dhinterseq}^*/\sigma_{\dhinterseq}$ converges in distribution to a standard Fr\'{e}chet random variable with tail index $\alpha$ (denoted by $Z$). On the other hand, for $\xi\in (0,1)$, the pair
\begin{align*}
(\bsX_{1,\dhinterseq}^*/\sigma_{\dhinterseq}, \bsX_{1+[\xi\dhinterseq],\dhinterseq+[\xi\dhinterseq]}^*/\sigma_{\dhinterseq})
\end{align*}
converges in distribution to a dependent random vector $(Z_1,Z_2)$ with Fr\'{e}chet marginals and parametrized by $\xi\in (0,1)$. See \cite[Lemma 5.1]{bucher:segers:2018sliding}. Consider now for $f:\Rset\to\Rset$
\begin{align*}
&\mathbb{G}_n^{(BS)}(f)=\sqrt{m_n}
\left\{m_n^{-1}\sum_{j=1}^{m_n}f\left(\frac{\bsX_{(j-1)\dhinterseq+1,j\dhinterseq}^*}{\sigma_{\dhinterseq}}\right)-\esp[f(Z)]\right\}\;,\\
&
\mathbb{F}_n^{(BS)}(f)=\sqrt{m_n}
\left\{q_n^{-1}\sum_{i=1}^{q_n}f\left(\frac{\bsX_{i,i+\dhinterseq-1}^*}{\sigma_{\dhinterseq}}\right)-\esp[f(Z)]\right\}\;.
\end{align*}
The aforementioned convergence gives the limiting variance. For the disjoint blocks empirical process the limiting variance is $\var(f(Z))$, while for the sliding blocks one it becomes (cf. Lemma 5.3 in \cite{bucher:segers:2018sliding})
\begin{align*}
C(f):=2\int_0^1\cov_\xi (f(Z_1),f(Z_2))\rmd \xi\;.
\end{align*}
In the context of our paper, if we choose $f(z)=\ind{z>1}$, then we can evaluate:
\begin{align*}
\var(f(Z))=\exp(-1)-\exp(-2)>C(f)=2\exp(-1)-4\exp(-2)\;.
\end{align*}
In the PoT framework considered in our paper, both the disjoint blocks and the sliding blocks empirical processes yield the limiting variance $\tailmeasurestar(H)$.

In summary:
\begin{itemize}
\item
\textbf{The PoT method, as proven in this paper, gives the same limiting behaviour for both disjoint and sliding blocks estimators.}
\item  The situation seems to be different in case of the block maxima method, at least for the inference problems considered up to date.
\item One can argue that
 the blocks maxima method is restricted to estimation of the parameters of the limiting distribution of maxima (the tail index, the extremal index) and is rather hard to see how the method can be employed to other cluster indices.
\end{itemize}

\subsection{Open questions}
\begin{itemize}
\item For the sliding blocks estimators, obtain consistency under minimal conditions (that is, without relying on $\beta$-mixing). In \cite[Chapter 10]{kulik:soulier:2020} we obtain consistency of the disjoint blocks estimators for time series that can be approximated by $m$-dependent sequences, including long memory ones.
\item Extend \Cref{thm:sliding-block-clt-1} to unbounded functionals $H$. The method of the proof presented in the paper should be applicable, however, some substantial modifications may be needed. Certainly, more restrictive conditions will need to be implemented.
\item In view of the behaviour of $\widetilde{\canditheta}_{n,1}(\tepseq)$ and $\widetilde{\canditheta}_{n,1}(c_{\dhinterseq})$, it would be interesting to know if (whenever possible) the PoT method always gives a smaller variance than the block maxima ones.
\end{itemize}

\section{Proofs}\label{sec:proofs}
In \Cref{sec:convergence-of-clusters-proofs} we show that \eqref{eq:thelimitwhichisnolongercalledbH} holds for $H\in \mca\cup\mcb$. The proofs in that section stem from \cite{kulik:soulier:2020}. The results from  \Cref{sec:convergence-of-clusters-proofs} are extended in \Cref{sec:convergence-of-clusters-proofs-cov} to covariance of clusters. In \Cref{sec:empirical-process} we introduce the empirical process of sliding blocks and state its functional convergence. The proof of the latter is separated into several parts. First, in
\Cref{sec:covariance-empirical-process} we derive the limiting covariance of the empirical process of sliding blocks. Next, in \Cref{sec:fidi} we prove the \fidi\ convergence.  Asymptotic continuity is dealt with in
\Cref{sec:tightness}. We conclude the proof in \Cref{sec:proof-conclusion}.

\subsection{Consequences of the mixing assumption}
Since $\ell_n$ can be chosen as $\dhinterseq^{1-\delta}$ ($\delta>0$) with $\delta$ arbitrarily close to zero, $\beta(\dhinterseq)$ gives:
\begin{subequations}
\begin{align}
&\lim_{n\to\infty}\frac{n}{\dhinterseq}\beta_{\dhinterseq}=0\;,\label{eq:mixing-rates-0}\\
&\lim_{n\to\infty}
\frac{1}
{\dhinterseq\pr(\norm{\bsX_0}>\tepseq)}
\sum_{j=\ell_n}^{\infty}\beta_{j}=0\;,\label{eq:mixing-rates-1a}\\
&\lim_{n\to\infty}\frac{1}{\dhinterseq \pr(\norm{\bsX_0}>\tepseq)}\sum_{j=1}^\infty \beta_{j\dhinterseq}=0\;.\label{eq:mixing-rates-7}
\end{align}
\end{subequations}
We recall the covariance inequality for bounded, beta-mixing random variables (in fact, the inequality holds for $\alpha$-mixing).
Let $\beta({\mcf_1},{\mcf_2})$ be the $\beta$-mixing coefficient between two sigma fields.
Then (\cite{ibragimov:1962})
\begin{align}\label{eq:davydov-2}
|\cov(H(Z_1),H(Z_2))|\leq \constant\ \|H\|_{\infty} \|\widetilde{H}\|_{\infty}\beta(\sigma(Z_1),\sigma(Z_2))\;.
\end{align}
In \eqref{eq:davydov-2} the constant $\constant$ does not depend on $H,\widetilde{H}$.

\subsection{Convergence of cluster measure}\label{sec:convergence-of-clusters-proofs}

\begin{proof}[Proof of \Cref{theo:cluster-RV}]
  Since $H$ has a support separated from zero, there exists $\epsilon>0$ such that $H(\bsx)=0$ if
  $\bsx^*\leq\epsilon$. Applying its shift invariance and the stationarity,
  we obtain
  \begin{align*}
    \tailmeasurestar_{n,\dhinterseq}(H)
    & = \frac1{\dhinterseq\pr(\norm{\bsX_0}>\tepseq)}  \sum_{j=1}^{\dhinterseq}
      \esp\left[ H(\tepseq^{-1} \bsX_{1,\dhinterseq}) \ind{\bsX_{1,j-1}^*\leq \tepseq \epsilon} \ind{\norm{\bsX_j}>\tepseq \epsilon}\right] \\
    & = \frac{\pr(\norm{\bsX_0}>\tepseq \epsilon)} {\pr(\norm{\bsX_0}>\tepseq)} \frac1{\dhinterseq}  \sum_{j=1}^{\dhinterseq}
      \esp\left[ H(\tepseq^{-1} \bsX_{1-j,\dhinterseq-j})
      \ind{\bsX_{1-j,-1}^*\leq \tepseq \epsilon} \mid \norm{\bsX_0}>\tepseq \epsilon \right] \\
    & =  \frac{\pr(\norm{\bsX_0}>\tepseq \epsilon)}  {\pr(\norm{\bsX_0}>\tepseq)} \int_0^1 g_n(v) \rmd v \; ,
  \end{align*}
  with
  \begin{align*}
    g_n(v) = \esp\left[ H(\tepseq^{-1} \bsX_{1-[\dhinterseq v],\dhinterseq-[\dhinterseq v]})
      \ind{\bsX_{1-[\dhinterseq v],-1}^*\leq \tepseq \epsilon} \mid \norm{\bsX_0}>\tepseq \epsilon \right] \; .
  \end{align*}
  By \Cref{lem:tailprocesstozero},
  $\lim_{n\to\infty} g_n(v)=\esp[H(\epsilon\bsY)\ind{\bsY_{-\infty,-1}^*\leq1}]$ for each
  $v\in(0,1)$.  Moreover, the sequence $g_n$ is uniformly bounded, thus by dominated convergence,
  regular variation of $\norm{\bsX_0}$ and \eqref{eq:relation-cluster-Y-Q-Theta-epsilon}, we obtain
  \begin{align*}
    \lim_{n\to\infty} \tailmeasurestar_{n,\dhinterseq}(H)
    & = \epsilon^{-\alpha} \esp[H(\epsilon \bsY)\ind{\bsY_{-\infty,-1}^*\leq1}] =  \tailmeasurestar(H) \; .
  \end{align*}
\end{proof}
\begin{proof}[Proof of \Cref{lem:summability-Q}]
  By \Cref{theo:cluster-RV} and \eqref{eq:seq-Q}, we have
  \begin{multline}
    \lim_{n\to\infty} \frac{\pr(\sum_{j=1}^{\dhinterseq}
      \norm{\bsX_j}\ind{\norm{\bsX_j}>\epsilon\tepseq }>\tepseq )}{\dhinterseq\pr(\norm{\bsX_0}>\tepseq )} \\
    = \canditheta \int_0^\infty \pr\left(z\sum_{j\in\Zset} \norm{\bsQ_j}
      \ind{z\norm{\bsQ_j}>\epsilon}>1\right) \alpha z^{-\alpha-1} \rmd z \;
    . \label{eq:epsilon-sumQ-R}
  \end{multline}
  By monotone convergence, the right hand side converges as $\epsilon\to0$ to
  $\canditheta \esp\left[ \left(\sum_{j\in\Zset} |\bsQ_j|\right)^\alpha \right]$.

  Consider the function
  \begin{align*} g(\zeta)=\canditheta\int_0^\infty \pr\left( z \sum_{j\in\Zset} |\bsQ_j| \ind{z|\bsQ_j| >
      \zeta}>1 \right) \alpha z^{-\alpha-1} \rmd z\;. \end{align*}
  It increases when $\zeta$ decreases to zero and its limit is
  $\canditheta\esp\left[ \left(\sum_{j\in\Zset} |\bsQ_j|\right)^\alpha \right] $. To prove
  that this quantity is finite, it suffices to prove that the function $g$ is bounded.  Fix $\epsilon>0$ and
  $\eta\in(0,1)$. By \ref{eq:AN-small}, there exists $\zeta$ such that
  \begin{align*}
    \limsup_{n\to\infty} \frac{\pr(\sum_{j=1}^{\dhinterseq}\norm{\bsX_j}\ind{\norm{\bsX_j}\leq \zeta\tepseq } > \eta
    \tepseq )} {\dhinterseq\pr(\norm{\bsX_0}>\tepseq )} \leq \epsilon \; .
  \end{align*}
  Fix $\zeta'<\zeta$. Starting from \eqref{eq:epsilon-sumQ-R} and applying
  \ref{eq:AN-small}, we obtain
  \begin{align*}
    0 & \leq g(\zeta')=\canditheta \int_0^\infty \pr\left( z \sum_{j\in\Zset} |\bsQ_j| \ind{z|\bsQ_j| >
        \zeta'}>1 \right) \alpha z^{-\alpha-1} \rmd z \\
      & = \lim_{n\to\infty}
        \frac{\pr(\sum_{j=1}^{\dhinterseq}|\bsX_j|\ind{|\bsX_j|>\tepseq \zeta'}>\tepseq )}{\dhinterseq\pr(|\bsX_0|>\tepseq )}
    \\
      & = \lim_{n\to\infty} \frac{\pr\left(\sum_{j=1}^{\dhinterseq}|\bsX_j|\ind{|\bsX_j|>\tepseq \zeta} +
        |\bsX_j|\ind{\tepseq \zeta\geq|\bsX_j|>\epsilon
        \tepseq \zeta'}>\tepseq \right)} {\dhinterseq\pr(|\bsX_0|>\tepseq )}
    \\
      & \leq \limsup_{n\to\infty} \frac{\pr(\sum_{j=1}^{\dhinterseq} \norm{\bsX_j}
        \ind{\norm{\bsX_j} \leq \tepseq \zeta} > \eta \tepseq )} {\dhinterseq\pr(\norm{\bsX_0}>\tepseq )} \\
      & \phantom{ ==== } + \lim_{n\to\infty} \frac{\pr(\sum_{j=1}^{\dhinterseq} \norm{\bsX_j} \ind{\norm{\bsX_i} >
        \tepseq \zeta} > (1-\eta) \tepseq )} {\dhinterseq\pr(\norm{\bsX_0} >  \tepseq )} \\
      & \leq \epsilon +  \canditheta \int_0^\infty \pr\left( z \sum_{j\in\Zset} \norm{\bsQ_j} \ind{z\norm{\bsQ_j} > \zeta} > 1-\eta
        \right) \alpha z^{-\alpha-1} \rmd z  \leq \epsilon + \canditheta\zeta^{-\alpha}  \; .
  \end{align*}
  The latter bound holds since the probability inside the integral is zero if $z \leq \zeta$
  since~$|\bsQ_j|\leq 1$ for all $j$.  This proves that the function $g$ is bounded in a
  neighbourhood of zero as claimed.

  By Condition~\ref{eq:AN-small}, we finally obtain
  \begin{align*}
    \lim_{n\to\infty} & \frac{\pr\left(\sum_{j=1}^{\dhinterseq}|\bsX_j|>\tepseq \right)} {\dhinterseq\pr(|\bsX_0|>\tepseq )}  = \lim_{\epsilon\to0} \lim_{n\to\infty} \frac{\pr\left
                        (\sum_{j=1}^{\dhinterseq}|\bsX_j|\ind{|\bsX_{j}|>\tepseq \epsilon}>\tepseq \right)}
                        {\dhinterseq\pr(|\bsX_0|>\tepseq )}    \\
                      & = \lim_{\epsilon\to0} \canditheta \int_0^\infty \pr\left( z \sum_{j\in\Zset} |\bsQ_j|
                        \ind{z|\bsQ_j|>\epsilon} > \epsilon \right) \alpha z^{-\alpha-1} \rmd z  = \canditheta
                        \esp\left[ \left(\sum_{j\in\Zset} |\bsQ_j|\right)^\alpha \right] \; .
  \end{align*}
\end{proof}
\begin{proof}[Proof of \Cref{theo:limit-lone-Q}]
  For $\epsilon>0$, we define the truncation operator $T_\epsilon$ by
  \begin{align}
    \label{eq:def-truncation-operator}
    T_\epsilon(\bsx) = \{\bsx_j\indicsmall{|\bsx_j|>\epsilon},j\in\Zset\} \; .
  \end{align}
  The operator $T_\epsilon$ is continuous \wrt\ the uniform norm at every $\bsx\in\lzero$ such that
  $|\bsx_j|\ne\epsilon$ for all $j\in\Zset$.

  Fix $\eta\in(0,1)$ and $\zeta>0$. Let $L_K$ be as in~\eqref{eq:hypo-K} and choose $\epsilon>0$ such
  that
  \begin{align*}
    \limsup_{n\to\infty} \frac{\pr(\sum_{j=1}^{\dhinterseq} \norm{\bsX_{j}}\ind{\norm{\bsX_j}\leq\epsilon
        \tepseq } > \eta \tepseq /L_K)} {\dhinterseq\pr(\norm{\bsX_0}>\tepseq )} \leq \zeta \; .
  \end{align*}
  Set $K_\epsilon = K\circ
  T_\epsilon$. Applying assumption~\eqref{eq:hypo-K}, we obtain
  \begin{align*}
    & \frac{\pr(K(\bsX_{1,\dhinterseq}/\tepseq )>1)} {\dhinterseq\pr(\norm{\bsX_0}>\tepseq )} \\
    & \leq
      \frac{\pr(K_\epsilon(\bsX_{1,\dhinterseq}/\tepseq ) > 1-\eta)} {\dhinterseq\pr(\norm{\bsX_0}>\tepseq )} +
      \frac{\pr(|K(\bsX_{1,\dhinterseq}/\tepseq ) - K_\epsilon(\bsX_{1,\dhinterseq}/\tepseq )| > \eta)} {\dhinterseq\pr(\norm{\bsX_0}>\tepseq )} \\
    & \leq \frac{\pr(K_\epsilon(\bsX_{1,\dhinterseq}/\tepseq ) > 1-\eta)} {\dhinterseq\pr(\norm{\bsX_0}>\tepseq )} +
      \frac{\pr(\sum_{i=1}^{\dhinterseq} \norm{\bsX_{j}}\ind{\norm{\bsX_j}\leq\epsilon \tepseq } > \eta \tepseq /\constant)}
      {\dhinterseq\pr(\norm{\bsX_0}>\tepseq )} \; .
  \end{align*}
  Applying \Cref{theo:cluster-RV} to $K_\epsilon$, this yields
  \begin{align*}
    \limsup_{n\to\infty} \frac{\pr(K(\bsX_{1,\dhinterseq}/\tepseq )>1)} {\dhinterseq\pr(\norm{\bsX_0}>\tepseq )} & \leq
    \limsup_{n\to\infty} \frac{\pr(K_\epsilon(\bsX_{1,\dhinterseq}/\tepseq ) > 1-\eta)} {\dhinterseq\pr(\norm{\bsX_0}>\tepseq )} + \zeta \\
    & = \int_0^\infty \pr(K_\epsilon(z\bsQ)>1-\eta) \alpha z^{-\alpha-1} \rmd z + \zeta \; .
  \end{align*}
  Similarly,
  \begin{align*}
    \liminf_{n\to\infty} \frac{\pr(K(\bsX_{1,\dhinterseq}/\tepseq ) > 1)} {\dhinterseq\pr(\norm{\bsX_0}>\tepseq )} & \geq
    \int_0^\infty \pr(K_\epsilon(z\bsQ)>1+\eta) \alpha z^{-\alpha-1} \rmd z - \zeta \; .
  \end{align*}
  Since $K(\bszero)=0$, \eqref{eq:hypo-K} implies that $|K(\bsx)|\leq \constant \sum_{j\in\Zset}\norm{\bsx_j}$, thus for all $y>0$,
  \begin{align*}
    \pr(K_\epsilon(z\bsQ)>y) \leq \pr\left(\sum_{j\in\Zset} z\norm{\bsQ_j}>y/\constant \right)
  \end{align*}
  and the latter quantity is integrable (as a function of $z$) with respect to
  $\alpha z^{-\alpha-1}\rmd z$ in view of~\ref{eq:AN-small} and \Cref{lem:summability-Q}.
  By bounded convergence, this yields
  \begin{align*}
    \lim_{\epsilon\to0} \int_0^\infty \pr(K_\epsilon(z\bsQ)>y) \alpha z^{-\alpha-1} \rmd z =
    \int_0^\infty \pr(K(z\bsQ)>y) \alpha z^{-\alpha-1} \rmd z \; .
  \end{align*}
  Altogether, we obtain
  \begin{multline*}
    \int_0^\infty \pr(K_\epsilon(z\bsQ)>1+\eta) \alpha z^{-\alpha-1} \rmd z -\zeta \leq
    \liminf_{n\to\infty} \frac{\pr(K(\bsX_{1,\dhinterseq}/\tepseq )>1)} {\dhinterseq\pr(\norm{\bsX_0}>\tepseq )} \\
    \leq \limsup_{n\to\infty} \frac{\pr(K(\bsX_{1,\dhinterseq}/\tepseq )>1)} {\dhinterseq\pr(\norm{\bsX_0}>\tepseq )}  \leq
    \int_0^\infty \pr(K_\epsilon(z\bsQ)>1-\eta) \alpha z^{-\alpha-1} \rmd z+\zeta \; .
  \end{multline*}
  Since $\zeta$ and $\eta$ are arbitrary, this finishes the proof. %(\ref{eq:identity-K-alpha>1}).
\end{proof}
\subsection{Covariance of clusters}\label{sec:convergence-of-clusters-proofs-cov}
We consider the limit
\begin{align*}
&\lim_{n\to\infty}\frac{1}{\dhinterseq\pr(\norm{\bsX_0}>\tepseq)}\cov\left(H(\bsX_{1,\dhinterseq}/\tepseq)
,\widetilde{H}(\bsX_{1+h,\dhinterseq+h}/\tepseq)\right)
\end{align*}
for different choices of $h$, possibly depending on $n$.
Under the conditions of \Cref{theo:cluster-RV}, if moreover \ref{eq:rnbarFun0} holds, the above limit is the same as
\begin{align*}
\lim_{n\to\infty}\frac{1}{\dhinterseq\pr(\norm{\bsX_0}>\tepseq)}\esp\left[H(\bsX_{1,\dhinterseq}/\tepseq)
\widetilde{H}(\bsX_{1+h,\dhinterseq+h}/\tepseq)\right]\;.
\end{align*}
Thus, we impose \ref{eq:rnbarFun0} and switch freely between $\esp$ and $\cov$ whenever suitable.

%%%%%%%%%%%%%%%%%%%%%%%%%5
\subsubsection{Uniform convergence of cluster measure}
In \Cref{theo:cluster-RV,theo:limit-lone-Q} we proved \eqref{eq:thelimitwhichisnolongercalledbH} for $H\in\mca\cup\mcb$. We note further that
if \eqref{eq:thelimitwhichisnolongercalledbH} holds for $H$ and $\widetilde{H}$, then it also holds for any linear combination of both functions. To deal with asymptotic normality, we need \eqref{eq:thelimitwhichisnolongercalledbH} to hold uniformly over a subclass of functions.
With this in mind, we introduce two additional classes of functions. First, we recall that
for a class $\mcg$ of functions $H:(\Rset^d)^\Zset\to \Rset$ its envelope is
\begin{align*}
\envelope[G](\bsx)=\sup_{H\in\mcg}|H(\bsx)|\;, \ \ \bsx\in(\Rset^d)^\Zset \;.
\end{align*}
\begin{definition}
$\widetilde\mca \subseteq {\rm span}(\mca)$ (resp. $\widetilde\mcb \subseteq {\rm span}(\mcb)$) is a class of functions with a finite envelope such that
\begin{align}\label{eq:class-Atilde}
\lim_{n\to\infty}\sup_{H\in \widetilde\mca}\tailmeasurestar_{n,\dhinterseq}(|H|)<\infty
\end{align}
 (resp. $\lim_{n\to\infty}\sup_{H\in \widetilde\mcb}\tailmeasurestar_{n,\dhinterseq}(|H|)<\infty$) and that
 for each $H$ there exist functions $K_n^H:(\Rset^d)^{\ell_n}\to\Rset_+$ such that
      \begin{align}\label{neg-small-blocks-new}
        \left| H\left(\frac{\bsX_{1,r_n}}{\tepseq}\right) - H\left(\frac{\bsX_{1,r_n-\ell_n}}{\tepseq}\right) \right|
         \leq K_n^H(\bsX_{r_n-\ell_n+1,r_n})\;, \ \
         \lim_{n\to\infty} \sup_{H\in \widetilde\mca}\frac{\esp\left[K_{n}^H(\bsX_{1,\ell_n}) \right]}{\dhinterseq \pr(\norm{\bsX_0}>\tepseq)} = 0\;.
   \end{align}
\end{definition}
\begin{remark}
The uniform convergence condition \eqref{eq:class-Atilde} strengthens the statement of \Cref{theo:cluster-RV}. Conditions \eqref{eq:class-Atilde} and \eqref{neg-small-blocks-new} are needed for asymptotic equicontinuity of empirical cluster process to be introduced below.\remarkend
\end{remark}
\begin{remark}
We note that
$$
\lim_{n\to\infty} \frac{\esp\left[K_{n}^H(\bsX_{1,\ell_n}) \right]}{\dhinterseq \pr(\norm{\bsX_0}>\tepseq)} = 0
$$
for each $H\in\widetilde\mca\cup \widetilde\mcb$. Let us verify it for
$H\in \mcb$. We have
\begin{multline*}
    |\ind{K(\bsx_{1,\dhinterseq})>1}-\ind{K(\bsx_{1,\dhinterseq-\ell_n})>1}| \\
    = \ind{K(\bsx_{1,\dhinterseq})>1}\ind{K(\bsx_{1,\dhinterseq-\ell_n})\leq 1}
    + \ind{K(\bsx_{1,\dhinterseq})\leq 1}\ind{K(\bsx_{1,\dhinterseq-\ell_n})> 1} \; .
  \end{multline*}
  We consider the first pair of indicators in the last line. The events
  $\{K(\bsx_{1,\dhinterseq})>1\}$ and $\{K(\bsx_{1,\dhinterseq-\ell_n})\leq 1\}$ imply that there
  exists $s>0$ such that $K(\bsx_{1,\dhinterseq})-K(\bsx_{1,\dhinterseq-\ell_n})>s$.
Applying the same reasoning to the second pair of indicators, we have
\begin{align*}
|\ind{K(\bsx_{1,\dhinterseq})>1}-\ind{K(\bsx_{1,\dhinterseq-\ell_n})>1}|\leq 2\ind{\constant \sum_{j=\dhinterseq-\ell_n+1}^{\dhinterseq}\norm{\bsx_j}>s}\;.
\end{align*}
Since $\ell_n=o(\dhinterseq)$
\begin{align*}
\pr\left(\sum_{j=\dhinterseq-\ell_n+1}^{\dhinterseq}\norm{\bsX_j}>s \tepseq\right)=O(\ell_n\pr(\norm{\bsX_0}>\tepseq))=(\dhinterseq\pr(\norm{\bsX_0}>\tepseq))\;.
\end{align*}
In summary,
\eqref{neg-small-blocks-new} holds if the envelope function is in $\widetilde\mca\cup \widetilde\mcb$.
\remarkend
\end{remark}
\begin{remark}\label{rem:A-LinA-delta}
Let $\delta>0$. If $H$ is bounded then
\begin{align*}
%\label{eq:uniform-bound}
\tailmeasurestar_{n,\dhinterseq}(|H|^{1+\delta})\leq \|H\|_\infty^{\delta}
\tailmeasurestar_{n,\dhinterseq}(|H|)
\end{align*}
and by the assumptions on the classes $\widetilde\mca$ and $\widetilde\mcb$,
$$
\lim_{n\to\infty}\sup_{H\in\widetilde\mca\cup\widetilde\mcb}\tailmeasurestar_{n,\dhinterseq}(|H|^{1+\delta})<\infty\;.
$$
\remarkend
\end{remark}
\begin{remark}
\label{rem:A-LinA}
Assume that~\ref{eq:conditiondh} holds.
Fix $0<s_0<t_0<\infty$.
Let $H\in\mca$ and recall that $H_s(\bsx)=H(\bsx/s)$. Assume that $\widetilde{\mca}:=\{H_s,s\in [s_0,t_0]\}$
is linearly ordered. Note that $\widetilde{\mca}\subset\mca$.
The envelope is $|H_{s_0}|\vee |H_{t_0}|\in\widetilde{\mca}$
hence \eqref{neg-small-blocks-new} holds.
Moreover,
$\sup_{s\in [s_0,t_0]}\tailmeasurestar_{n,\dhinterseq}(|H_s|)$ is achieved at $s_0$ or $t_0$. Likewise,
\begin{align*}
\lim_{n\to\infty}\sup_{s,t\in [s_0,t_0]}\tailmeasurestar_{n,\dhinterseq}(|H_s-H_t|)<\infty\;.
\end{align*}
The same applies to $H\in\mcb$ if additionally \ref{eq:AN-small} holds.
\remarkend
\end{remark}
\subsubsection{Conditional convergence}\label{sec:cond-convergence}
We consider conditional convergence of functions $H,\widetilde{H}$ acting on overlapping blocks.
\begin{lemma}\label{lem:cond-conv-H}
Assume that \ref{eq:conditiondh} holds. Let $h<\dhinterseq$,
$H,\widetilde{H}\in \mcl$ and \textcolor{black}{$\widetilde{H}(\bszero)=0$}.
Then
\begin{align*}
\lim_{n\to\infty}\esp[H(\bsX_{1,\dhinterseq}/\tepseq)\widetilde{H}(\bsX_{1+h,\dhinterseq+h}/\tepseq)\mid \norm{\bsX_0}>\tepseq]\\
=\left\{
\begin{array}{ll}
\esp[H(\bsY_{1,\infty})\widetilde{H}(\bsY_{1+h,\infty})] \;, & \mbox{\rm if } h\ \mbox{\rm fixed}\;,\\
0\;, & \mbox{\rm if } h=h_n\to\infty\;.
\end{array}
\right.
\end{align*}
and
\begin{align*}
\lim_{n\to\infty}\esp[H(\bsX_{-\dhinterseq,\dhinterseq}/\tepseq)\widetilde{H}(\bsX_{-\dhinterseq+h,\dhinterseq+h}/\tepseq)\mid \norm{\bsX_0}>\tepseq]=
\esp[H(\bsY)\widetilde{H}(\bsY)]\;.
\end{align*}
\end{lemma}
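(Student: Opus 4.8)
The plan is to reduce the lemma to the \fidi\ convergence of the conditioned series $\tepseq^{-1}\bsX$ to its tail process $\bsY$, in the same spirit as \Cref{lem:tailprocesstozero} (recall that under the standing assumption \ref{eq:rnbarFun0} one has $\tepseq\to\infty$ and $\dhinterseq\to\infty$). Two consequences of the anti-clustering condition \ref{eq:conditiondh} carry the argument: it makes the ``far away'' coordinates of each block asymptotically negligible, so that $H(\bsX_{1,\dhinterseq}/\tepseq)$ and $\widetilde H(\bsX_{1+h,\dhinterseq+h}/\tepseq)$ may be replaced by functionals of a fixed finite window; and it forces $\bsY\in\lzero(\Rset^d)$ almost surely, which lets one send the window size to infinity in the limit. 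I would first carry this out for $H,\widetilde H$ Lipschitz with respect to the uniform norm, so that the truncation steps are quantitative; the case of functionals that are merely almost surely continuous with respect to the law of $\bsY$ is handled exactly as in the proof of \Cref{lem:tailprocesstozero}.

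For the first display with $h$ fixed, fix $M>h$. From $|H(\bsX_{1,\dhinterseq}/\tepseq)-H(\bsX_{1,M}/\tepseq)|\le\min(\constant,L_H\max_{M<j\le\dhinterseq}\norm{\bsX_j}/\tepseq)$, the analogous bound for $\widetilde H$ on the window $[1+h,M+h]$, and boundedness of the remaining factor, \ref{eq:conditiondh} shows that replacing the two block functionals by $H(\bsX_{1,M}/\tepseq)$ and $\widetilde H(\bsX_{1+h,M+h}/\tepseq)$ changes the conditional expectation by a quantity whose $\limsup_{n\to\infty}$ tends to $0$ as $M\to\infty$. For fixed $M$, the map $\bsx\mapsto H(\bsx_{1,M})\widetilde H(\bsx_{1+h,M+h})$ is a bounded functional of the finitely many coordinates $\bsx_{1,M+h}$ and is continuous off a null set of the law of $\bsY_{1,M+h}$, so the defining property of the tail process gives $\esp[H(\bsX_{1,M}/\tepseq)\widetilde H(\bsX_{1+h,M+h}/\tepseq)\mid\norm{\bsX_0}>\tepseq]\to\esp[H(\bsY_{1,M})\widetilde H(\bsY_{1+h,M+h})]$. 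Finally, since $\bsY\in\lzero(\Rset^d)$ almost surely, dominated convergence gives $\esp[H(\bsY_{1,M})\widetilde H(\bsY_{1+h,M+h})]\to\esp[H(\bsY_{1,\infty})\widetilde H(\bsY_{1+h,\infty})]$ as $M\to\infty$, and chaining the three limits proves this case. When $h=h_n\to\infty$ I would instead use $\widetilde H(\bszero)=0$: then $|\widetilde H(\bsX_{1+h,\dhinterseq+h}/\tepseq)|\le\min(\constant,L_{\widetilde H}\max_{1+h\le j\le\dhinterseq+h}\norm{\bsX_j}/\tepseq)$, and because $h_n\to\infty$ every index $j\ge 1+h_n$ eventually exceeds an arbitrary fixed threshold, so \ref{eq:conditiondh} forces the conditional expectation of this bound to $0$; multiplying by the bounded factor $H(\bsX_{1,\dhinterseq}/\tepseq)$ yields the limit $0$.

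The second display is proved by the same three-step scheme. Here $\bsX_{-\dhinterseq,\dhinterseq}$ and $\bsX_{-\dhinterseq+h,\dhinterseq+h}$ are two-sided windows about the origin that exhaust $\Zset$ (certainly when $h$ is fixed), so after truncating both $H$ and $\widetilde H$ to a common fixed window $[-M,M]$, with the truncation errors again controlled by \ref{eq:conditiondh}, one is left with $\esp[(H\widetilde H)(\bsX_{-M,M}/\tepseq)\mid\norm{\bsX_0}>\tepseq]$, which converges to $\esp[(H\widetilde H)(\bsY_{-M,M})]$ by the tail-process convergence and then to $\esp[H(\bsY)\widetilde H(\bsY)]$ as $M\to\infty$; equivalently, once $\widetilde H(\bsX_{-\dhinterseq+h,\dhinterseq+h}/\tepseq)$ has been replaced by $\widetilde H(\bsX_{-\dhinterseq,\dhinterseq}/\tepseq)$ at negligible cost, one may apply \Cref{lem:tailprocesstozero} directly to the product functional $H\widetilde H\in\mcl$. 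I expect the main obstacle to be the bookkeeping for functionals of $\mcl$ that are only almost surely continuous rather than Lipschitz: one must check that the restricted and shifted functionals appearing above are still almost surely continuous for the relevant finite-dimensional laws and that the truncation steps can be pushed through the continuous mapping theorem, just as in the proof of \Cref{lem:tailprocesstozero}. A secondary point is that the two overlapping blocks span an index range of length up to $2\dhinterseq$, so the anti-clustering bounds are invoked over ranges of the form $[k,2\dhinterseq]$ rather than $[k,\dhinterseq]$; this is immaterial, since the block length $\dhinterseq$ in \ref{eq:conditiondh} may be replaced throughout by $2\dhinterseq$ without affecting the growth restrictions \ref{eq:rnbarFun0}.
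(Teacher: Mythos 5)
Your proof is correct and follows essentially the same route as the paper's: truncate both block functionals to a fixed finite window (with the error controlled via \ref{eq:conditiondh}), pass to the tail process on that window, let the window grow using $\bsY\in\lzero(\Rset^d)$, and chain the limits by the triangular argument, handling the $h_n\to\infty$ case through $\widetilde H(\bszero)=0$ together with anticlustering. If anything, your explicit bound $|\widetilde H(\bsx)|\le\min(\constant,L_{\widetilde H}\,\bsx^*)$ and your remark that the overlapping blocks force \ref{eq:conditiondh} to be invoked over ranges of length up to $2\dhinterseq$ address two small points slightly more carefully than the paper does.
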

\begin{proof}
Since $H,\widetilde{H}$ are bounded, the first expectation of interest is dominated by
\begin{align*}
\|H\|_{\infty}\|\widetilde{H}\|_{\infty}\pr(\bsX_{1+h,\dhinterseq+h}^*>\tepseq\mid \norm{\bsX_0}>\tepseq)\;.
\end{align*}
Thus, the statement for $h=h_n\to\infty$ follows immediately from \ref{eq:conditiondh} (cf. the argument in the proof of \cite[Theorem 6.1.4]{kulik:soulier:2020}).

Now, let $h$ be fixed. Fix $r$. Since $H,\widetilde{H}$ are bounded Lipschitz continuous, we have by  \Cref{lem:tailprocesstozero},
\begin{align*}
&\lim_{n\to\infty}\esp[H(\bsX_{1,r}/\tepseq)\widetilde{H}(\bsX_{1+h,r+h}/\tepseq)\mid \norm{\bsX_0}>\tepseq]
=\esp[H(\bsY_{1,r})\widetilde{H}(\bsY_{1+h,r+h})]\;,\\
&\lim_{n\to\infty}\esp[H(\bsX_{-r,r}/\tepseq)\widetilde{H}(\bsX_{-r+h,r+h}/\tepseq)\mid \norm{\bsX_0}>\tepseq]
=\esp[H(\bsY_{-r,r})\widetilde{H}(\bsY_{-r+h,r+h})]\;.
\end{align*}
Since the tail process tends to zero under
  condition~\ref{eq:conditiondh}, it also holds that
  \begin{align*}
    &\lim_{r\to\infty}  \esp[H(\bsY_{1,r})\widetilde{H}(\bsY_{1+h,r+h})] = \esp[H(\bsY_{1,\infty})\widetilde{H}(\bsY_{1+h,\infty})]\;,  \\
    &\lim_{r\to\infty}  \esp[H(\bsY_{-r,r})\widetilde{H}(\bsY_{-r+h,r+h})] = \esp[H(\bsY)\widetilde{H}(\bsY)]=\esp[H(\bsY)\widetilde{H}(\bsY)] \; .
  \end{align*}
Indeed, considering  the first statement only we have
\begin{align*}
&\lim_{r\to\infty}\left|\esp[H(\bsY_{1,r})\widetilde{H}(\bsY_{1+h,r+h})] - \esp[H(\bsY_{1,\infty})\widetilde{H}(\bsY_{1+h,\infty})]\right|\\
&\leq \lim_{r\to\infty}\esp[\left|H(\bsY_{1,r})-H(\bsY_{1,\infty})\right||\widetilde{H}|(\bsY_{1+h,r+h})] \\ &\phantom{=}+\lim_{r\to\infty}\esp\left[|H|(\bsY_{1,\infty})\left|\widetilde{H}(\bsY_{1+h,r+h})-\widetilde{H}(\bsY_{1+h,\infty})\right|\right]\\
&\leq \|\widetilde{H}\|_{\infty}\lim_{r\to\infty}\left\{\esp\left[\left|H(\bsY_{1,r})-H(\bsY_{1,\infty})\right|\right]
+\esp\left[\left|\widetilde{H}(\bsY_{1+h,r+h})-\widetilde{H}(\bsY_{1+h,\infty})\right|\right]\right\}=0\;.
\end{align*}
To conclude, we only need to apply the triangular argument, that
  is to prove that
  \begin{align*}
    &\lim_{r\to\infty}\limsup_{n\to\infty}\\
    &
    \left| \esp[
    H(\bsX_{1,r}/\tepseq)\widetilde{H}(\bsX_{1+h,r+h}/\tepseq) -
    H(\bsX_{1,\dhinterseq}/\tepseq)\widetilde{H}(\bsX_{1+h,\dhinterseq+h}/\tepseq)  \mid \norm{\bsX_0}>\tepseq] \right| = 0 \; .
  \end{align*}
  Using again the fact that $H,\widetilde{H}$ are bounded, the conditional expectation is dominated by
  \begin{align}
  &\|\widetilde{H}\|_{\infty}\left|\esp[H(\bsX_{1,r}/\tepseq)-H(\bsX_{1,\dhinterseq}/\tepseq)\mid \norm{\bsX_0}>\tepseq]\right|\nonumber\\
  &+\|H\|_{\infty}\left|\esp[\widetilde{H}(\bsX_{1+h,r+h}/\tepseq)-\widetilde{H}(\bsX_{1+h,\dhinterseq+h}/\tepseq)\mid \norm{\bsX_0}>\tepseq]\right|\;.\label{eq:cond-conv-bound-1}
  \end{align}
  Fix $\epsilon>0$.  Since $H$ is Lipschitz continuous, applying condition~\ref{eq:conditiondh}
  yields
  \begin{multline*}
    \lim_{r\to\infty} \limsup_{n\to\infty}
    \left| \esp[H(\bsX_{1,r}/\tepseq) - H(\bsX_{1,\dhinterseq}/\tepseq) \mid \norm{\bsX_0}>\tepseq] \right| \\
    \leq \constant \left\{ \epsilon +     \lim_{r\to\infty} \limsup_{n\to\infty} \pr( \bsX_{r,\dhinterseq}^{*}>\epsilon
      \tepseq \mid \norm{\bsX_0}>\tepseq) \right\} = \constant \times \epsilon \; .
  \end{multline*}
  The same argument applies to \eqref{eq:cond-conv-bound-1}.
  Since $\epsilon$ is arbitrary, this concludes the proof.
\end{proof}

\subsubsection{Covariance of clusters: Disjoint blocks}
The first result is straightforward under the beta-mixing conditions.
\begin{lemma}[Disjoint blocks I]\label{lem:covariance-disjoint-blocks}
Assume that \ref{eq:conditiondh},~\ref{eq:rnbarFun0}, \eqref{eq:mixing-rates-1a} hold.
Then
\begin{align*}
%\label{eq:disjoint-blocks}
&\lim_{n\to\infty}
\frac{1}{\dhinterseq\pr(\norm{\bsX_0}>\tepseq)}\sup_{\xi'>1}\sup_{H,\widetilde{H}\in\widetilde\mca\cup\widetilde\mcb}
\esp\left[H(\bsX_{1,\dhinterseq}/\tepseq)
\widetilde{H}(\bsX_{1+[\xi' \dhinterseq],\dhinterseq+[\xi'\dhinterseq]}/\tepseq)\right]=0\;.
\end{align*}
\end{lemma}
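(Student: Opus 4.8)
The plan is to control the expectation uniformly in $\xi'>1$ and in $H,\widetilde H$ by first \emph{creating a lag} of order $\ell_n$ between the two blocks and then invoking the $\beta$-mixing covariance inequality~\eqref{eq:davydov-2}. The delicate point is that $\xi'>1$ only guarantees $[\xi'\dhinterseq]\ge\dhinterseq$, so the blocks $\bsX_{1,\dhinterseq}$ and $\bsX_{1+[\xi'\dhinterseq],\dhinterseq+[\xi'\dhinterseq]}$ may be \emph{adjacent} (zero lag, when $[\xi'\dhinterseq]=\dhinterseq$), and a direct application of~\eqref{eq:davydov-2} to them would only yield the useless bound $\constant\,\beta_0$. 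To get around this I would use property~\eqref{neg-small-blocks-new} in the definition of $\widetilde\mca\cup\widetilde\mcb$: replacing the first block $\bsX_{1,\dhinterseq}$ by its truncation $\bsX_{1,\dhinterseq-\ell_n}$ moves the right endpoint of the first block back by $\ell_n$, at a cost that is $o(\dhinterseq\pr(\norm{\bsX_0}>\tepseq))$ uniformly over the class.

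Concretely, for every $\xi'>1$ and $H,\widetilde H\in\widetilde\mca\cup\widetilde\mcb$ I would decompose
\begin{align*}
& \esp\left[H(\bsX_{1,\dhinterseq}/\tepseq)\,\widetilde H(\bsX_{1+[\xi'\dhinterseq],\dhinterseq+[\xi'\dhinterseq]}/\tepseq)\right] \\
& \qquad = \esp\left[H(\bsX_{1,\dhinterseq-\ell_n}/\tepseq)\,\widetilde H(\bsX_{1+[\xi'\dhinterseq],\dhinterseq+[\xi'\dhinterseq]}/\tepseq)\right] + R_n \;,
\end{align*}
where, by~\eqref{neg-small-blocks-new}, stationarity and boundedness of $\widetilde H$, $|R_n|\le\|\widetilde H\|_\infty\,\esp[K_n^H(\bsX_{1,\ell_n})]$, so that $R_n/(\dhinterseq\pr(\norm{\bsX_0}>\tepseq))\to0$ uniformly over the class. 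Since $[\xi'\dhinterseq]\ge\dhinterseq$, the index sets $\{1,\dots,\dhinterseq-\ell_n\}$ and $\{1+[\xi'\dhinterseq],\dots,\dhinterseq+[\xi'\dhinterseq]\}$ are separated by a lag of at least $\ell_n+1$; hence by~\eqref{eq:davydov-2} and the monotonicity of $\{\beta_j\}$ the leading term equals $\esp[H(\bsX_{1,\dhinterseq-\ell_n}/\tepseq)]\,\esp[\widetilde H(\bsX_{1+[\xi'\dhinterseq],\dhinterseq+[\xi'\dhinterseq]}/\tepseq)] + O(\beta_{\ell_n})$, uniformly in $\xi'$ and over the (uniformly bounded) class.

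It then remains to divide by $\dhinterseq\pr(\norm{\bsX_0}>\tepseq)$ and let $n\to\infty$. The mixing error is negligible because $\beta_{\ell_n}\le\sum_{j=\ell_n}^{\infty}\beta_j$ and~\eqref{eq:mixing-rates-1a} holds. For the product of expectations, stationarity gives $\esp[\widetilde H(\bsX_{1+[\xi'\dhinterseq],\dhinterseq+[\xi'\dhinterseq]}/\tepseq)]=\dhinterseq\pr(\norm{\bsX_0}>\tepseq)\,\tailmeasurestar_{n,\dhinterseq}(\widetilde H)$, and, writing $H(\bsX_{1,\dhinterseq-\ell_n}/\tepseq)=H(\bsX_{1,\dhinterseq}/\tepseq)-(H(\bsX_{1,\dhinterseq}/\tepseq)-H(\bsX_{1,\dhinterseq-\ell_n}/\tepseq))$ and reusing the bound on $R_n$, $|\esp[H(\bsX_{1,\dhinterseq-\ell_n}/\tepseq)]|=O(\dhinterseq\pr(\norm{\bsX_0}>\tepseq))$; combined with~\eqref{eq:class-Atilde} to bound $\tailmeasurestar_{n,\dhinterseq}(|H|)$ and $\tailmeasurestar_{n,\dhinterseq}(|\widetilde H|)$ uniformly, the product is $O((\dhinterseq\pr(\norm{\bsX_0}>\tepseq))^2)$, which divided by $\dhinterseq\pr(\norm{\bsX_0}>\tepseq)$ vanishes since $\dhinterseq\pr(\norm{\bsX_0}>\tepseq)\to0$ by~\ref{eq:rnbarFun0}. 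All bounds being uniform in $\xi'>1$ and in $H,\widetilde H\in\widetilde\mca\cup\widetilde\mcb$, taking the supremum and then $n\to\infty$ gives the claim. I expect the adjacent-blocks issue above to be the only genuinely delicate point: once the $\ell_n$-truncation from~\eqref{neg-small-blocks-new} is in place, everything reduces to the covariance inequality and the standing class conditions, with~\ref{eq:conditiondh} entering only through the convergences underpinning the definitions of $\widetilde\mca$ and $\widetilde\mcb$.
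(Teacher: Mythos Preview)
Your proof is correct and follows essentially the same route as the paper: truncate the first block via~\eqref{neg-small-blocks-new} to create an $\ell_n$-gap, then apply the covariance inequality~\eqref{eq:davydov-2} together with~\eqref{eq:mixing-rates-1a}. The only cosmetic difference is that you spell out the product-of-expectations term explicitly, whereas the paper invokes its earlier remark that under~\ref{eq:rnbarFun0} one may switch freely between $\esp$ and $\cov$ to absorb it.
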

\begin{proof}[Proof of \Cref{lem:covariance-disjoint-blocks}]
Let $H,\widetilde{H}\in \widetilde\mca\cup\widetilde\mcb$. Then, using \eqref{neg-small-blocks-new},
\begin{align*}
&\left|\esp\left[H(\bsX_{1,\dhinterseq}/\tepseq)
\widetilde{H}(\bsX_{1+[\xi' \dhinterseq],\dhinterseq+[\xi'\dhinterseq]}/\tepseq)\right]\right|\\
&\leq \left|\esp\left[H(\bsX_{1,\dhinterseq-\ell_n}/\tepseq)
\widetilde{H}(\bsX_{1+[\xi' \dhinterseq],\dhinterseq+[\xi'\dhinterseq]}/\tepseq)\right]\right|
+ \|\widetilde{H}\|_{\infty} \esp\left[K_n^H(\bsX_{1,\ell_n})\right]\;
\end{align*}
and the latter term is $o(\dhinterseq\pr(\norm{\bsX_0}>\tepseq))$, uniformly over $\widetilde{\mca}\cup\widetilde{\mcb}$ by the assumption.

Using \eqref{eq:davydov-2} and \eqref{eq:mixing-rates-1a}, we have
\begin{align*}
&
\frac{\left|\cov\left(H(\bsX_{1,\dhinterseq-\ell_n}/\tepseq)
,\widetilde{H}(\bsX_{1+[\xi'\dhinterseq],\dhinterseq+[\xi'\dhinterseq]}/\tepseq)\right)\right| }{\dhinterseq\pr(\norm{\bsX_0}>\tepseq)}
\leq \|H\|_\infty\|\widetilde{H}\|_\infty \frac{\beta_{\ell_n+[(\xi'-1)\dhinterseq]}}{\dhinterseq\pr(\norm{\bsX_0}>\tepseq)}
\end{align*}
and the latter is $o(1)$
uniformly over the class of functions.
\end{proof}
We extend the above result to the excess functional $\exc_s(\bsx)=\sum_{j\in\Zset}\ind{\norm{\bsx_j}>s}$.
\begin{lemma}[Disjoint blocks II]\label{lem:covariance-disjoint-blocks-exc}
Assume that \ref{eq:conditiondh}, \ref{eq:rnbarFun0}, \eqref{eq:mixing-rates-1a} hold.
Then
\begin{align*}
&\lim_{n\to\infty}
\frac{1}{\dhinterseq\pr(\norm{\bsX_0}>\tepseq)}\sup_{\xi'>1}\sup_{H\in\widetilde\mca\cup\widetilde\mcb}
\sup_{s\in [s_0,t_0]}\esp\left[H(\bsX_{1,\dhinterseq}/\tepseq)
\exc_s(\bsX_{1+[\xi' \dhinterseq],\dhinterseq+[\xi'\dhinterseq]}/\tepseq)\right]=0\;.
\end{align*}
\end{lemma}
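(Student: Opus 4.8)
To prove this I would follow the scheme of \Cref{lem:covariance-disjoint-blocks}, the only genuinely new point being that $\exc_s$ is \emph{unbounded} (it can equal $\dhinterseq$), so the covariance inequality \eqref{eq:davydov-2} is not applicable as it stands. The plan is to expand the excess functional over the second block into a sum of $\dhinterseq$ bounded indicators and to cut that sum at the scale $\ell_n$, peeling off the few indicators that sit too close to $\bsX_{1,\dhinterseq}$ for $\beta$-mixing to help. After reindexing, $\exc_s(\bsX_{1+[\xi'\dhinterseq],\dhinterseq+[\xi'\dhinterseq]}/\tepseq)=\sum_{j=1}^{\dhinterseq}\ind{\norm{\bsX_{j+[\xi'\dhinterseq]}}>s\tepseq}$, and I would write
\begin{align*}
\esp\left[H(\bsX_{1,\dhinterseq}/\tepseq)\,\exc_s(\bsX_{1+[\xi'\dhinterseq],\dhinterseq+[\xi'\dhinterseq]}/\tepseq)\right]
&=\sum_{j=1}^{\ell_n}\esp[H(\bsX_{1,\dhinterseq}/\tepseq)\ind{\norm{\bsX_{j+[\xi'\dhinterseq]}}>s\tepseq}]\\
&\quad+\sum_{j=\ell_n+1}^{\dhinterseq}\esp[H(\bsX_{1,\dhinterseq}/\tepseq)\ind{\norm{\bsX_{j+[\xi'\dhinterseq]}}>s\tepseq}]=:A_n+B_n\;.
\end{align*}

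The near-diagonal term $A_n$ is treated by brute force. Boundedness of $H$ and stationarity give $|A_n|\le\|H\|_{\infty}\,\ell_n\,\pr(\norm{\bsX_0}>s\tepseq)$, and since regular variation of $\norm{\bsX_0}$ yields $\pr(\norm{\bsX_0}>s\tepseq)\le\pr(\norm{\bsX_0}>s_0\tepseq)=O(\pr(\norm{\bsX_0}>\tepseq))$ uniformly over $s\in[s_0,t_0]$, dividing by $\dhinterseq\pr(\norm{\bsX_0}>\tepseq)$ leaves a quantity of order $\|H\|_{\infty}\,\ell_n/\dhinterseq$. This tends to $0$ uniformly in $\xi'>1$, $s\in[s_0,t_0]$ and $H\in\widetilde\mca\cup\widetilde\mcb$, because $\ell_n/\dhinterseq\to0$ and $\sup_{H\in\widetilde\mca\cup\widetilde\mcb}\|H\|_{\infty}<\infty$.

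For the far term $B_n$, each summand pairs the bounded functional $H(\bsX_{1,\dhinterseq}/\tepseq)$ of $\bsX_1,\dots,\bsX_{\dhinterseq}$ with the indicator of an event depending only on $\bsX_{j+[\xi'\dhinterseq]}$; as $\xi'>1$ forces $[\xi'\dhinterseq]\ge\dhinterseq$, the two $\sigma$-fields are separated by $j+[\xi'\dhinterseq]-\dhinterseq\ge j>\ell_n$ lags, so \eqref{eq:davydov-2} gives $|\cov|\le\constant\|H\|_{\infty}\beta_j$. Replacing each $\esp[H\,\ind{\cdot}]$ by $\esp[H]\pr(\norm{\bsX_0}>s\tepseq)$ plus this covariance remainder and using monotonicity of $\{\beta_j\}$ I obtain
\begin{align*}
|B_n|\le \dhinterseq\,\esp[|H(\bsX_{1,\dhinterseq}/\tepseq)|]\,\pr(\norm{\bsX_0}>s\tepseq)+\constant\,\|H\|_{\infty}\sum_{j=\ell_n}^{\infty}\beta_j\;.
\end{align*}
After division by $\dhinterseq\pr(\norm{\bsX_0}>\tepseq)$ the first term is at most a constant multiple of $\tailmeasurestar_{n,\dhinterseq}(|H|)\,\dhinterseq\pr(\norm{\bsX_0}>\tepseq)$, which vanishes uniformly by \eqref{eq:class-Atilde} and \ref{eq:rnbarFun0}, while the second vanishes uniformly by \eqref{eq:mixing-rates-1a}. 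Since all the above bounds are uniform in $\xi'>1$ (the relevant lag only enlarges with $\xi'$, so $\beta_{j+[\xi'\dhinterseq]-\dhinterseq}\le\beta_j$), in $s\in[s_0,t_0]$ and in $H\in\widetilde\mca\cup\widetilde\mcb$, taking the triple supremum and letting $n\to\infty$ closes the argument.

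The crux — and the only place the $\ell_n$ split is forced on us — is that the cutting level must simultaneously diverge, so that $\sum_{j\ge\ell_n}\beta_j$ is negligible against the \emph{vanishing} normaliser $\dhinterseq\pr(\norm{\bsX_0}>\tepseq)$ (which is precisely what \eqref{eq:mixing-rates-1a} encodes), and be $o(\dhinterseq)$, so that the $\ell_n$ near-diagonal indicators contribute negligibly in $A_n$; the sequence $\ell_n$ is tailor-made for this. Everything else is the same bookkeeping as in \Cref{lem:covariance-disjoint-blocks}, once the uniform-in-$H$ control \eqref{eq:class-Atilde} and the uniform sup-norm bound on $\widetilde\mca\cup\widetilde\mcb$ are invoked.
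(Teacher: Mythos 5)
Your argument is correct and essentially identical to the paper's: both expand $\exc_s$ into $\dhinterseq$ indicators, peel off an $O(\ell_n)$ near-diagonal piece bounded brutally by $\|H\|_\infty\,\ell_n\,\pr(\norm{\bsX_0}>s_0\tepseq)=o(\dhinterseq\pr(\norm{\bsX_0}>\tepseq))$, and control the remaining terms with the covariance inequality \eqref{eq:davydov-2} and \eqref{eq:mixing-rates-1a} (the paper handles the $\esp$-vs-$\cov$ centering via its standing remark in \Cref{sec:convergence-of-clusters-proofs-cov}, you via \eqref{eq:class-Atilde} and \ref{eq:rnbarFun0} — same substance). The only cosmetic difference is where the cut at scale $\ell_n$ is placed (relative to the start of the second block versus relative to the end of the first), which does not affect the estimates.
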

\begin{proof}
Recall that $\ell_n=o(\dhinterseq)$. Split the sum $\sum_{j=[\xi'\dhinterseq]+1}^{\dhinterseq+[\xi'\dhinterseq]}$ into
$\sum_{j=[\xi'\dhinterseq]+1}^{\dhinterseq+\ell_n}$  and $\sum_{j=\dhinterseq+\ell_n+1}^{\dhinterseq+[\xi'\dhinterseq]}$.

For the first sum we have
\begin{align*}
&\frac{1}{\dhinterseq\pr(\norm{\bsX_0}>\tepseq)}\sum_{j=[\xi'\dhinterseq]+1}^{\dhinterseq+\ell_n}\left|\esp[H(\bsX_{1,\dhinterseq}/\tepseq)\ind{\norm{\bsX_j}>\tepseq s}]\right|\leq \|H\|_{\infty}\frac{\ell_n\pr(\norm{\bsX_0}>\tepseq s)}{\dhinterseq\pr(\norm{\bsX_0}>\tepseq)}=o(1)
\end{align*}
uniformly over the class of functions and over $s$.

Using \eqref{eq:davydov-2} we have
\begin{align*}
&\frac{1}{\dhinterseq\pr(\norm{\bsX_0}>\tepseq)}\sum_{j=\dhinterseq+\ell_n+1}^{\dhinterseq+[\xi'\dhinterseq]}
\left|\cov(H(\bsX_{1,\dhinterseq}/\tepseq),\ind{\norm{\bsX_j}>\tepseq s})\right|\\
&\leq
\frac{\|H\|_{\infty}}{\dhinterseq\pr(\norm{\bsX_0}>\tepseq)}\sum_{j=\dhinterseq+\ell_n+1}^{\infty}\beta_{j-\dhinterseq}\;.
\end{align*}
We finish the proof using the mixing assumption \eqref{eq:mixing-rates-1a}.
\end{proof}
\subsubsection{Covariance of clusters: Overlapping blocks}
We consider three cases separately: a) $H,\widetilde{H}\in\mca$ (\Cref{thm:sliding-block-cov}); b) $H,\widetilde{H}\in\mcb$ (\Cref{thm:sliding-block-cov-hk}); c) the excess functional (\Cref{thm:sliding-block-cov-exc}).
\begin{proposition}[Overlapping blocks I]\label{thm:sliding-block-cov}
Assume that \ref{eq:conditiondh} and \ref{eq:rnbarFun0} hold. Let $h<\dhinterseq$ and $\xi\in (0,1)$.
For $H,\widetilde{H}\in\mca$ we have
\begin{align}\label{eq:covariance-sliding-blocks}
&\lim_{n\to\infty}
\frac{1}{\dhinterseq\pr(\norm{\bsX_0}>\tepseq)}\esp\left[H(\bsX_{1,\dhinterseq}/\tepseq)
\widetilde{H}(\bsX_{1+h,\dhinterseq+h}/\tepseq)\right]\notag\\
&=
\left\{
\begin{array}{ll}
\esp[H(\bsY)\widetilde{H}(\bsY)\ind{\bsY^*_{-\infty,-1}\leq 1}] \;, & \mbox{\rm if } h/\dhinterseq \to 0\;, \\
(1-\xi)\esp[H(\bsY)\widetilde{H}(\bsY)\ind{\bsY^*_{-\infty,-1}\leq 1}]\;, & \mbox{\rm if } h=h_n=[\xi \dhinterseq]
\end{array}
\right..
\end{align}
\end{proposition}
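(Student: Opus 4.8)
The plan is to follow the proof of \Cref{theo:cluster-RV} closely, with \Cref{lem:cond-conv-H} playing the role of \Cref{lem:tailprocesstozero} as the basic input on conditional convergence of two functionals evaluated on overlapping blocks. Since $H,\widetilde H\in\mca$ both have support separated from $\bszero$, fix $\epsilon\in(0,1)$ with $H(\bsx)=\widetilde H(\bsx)=0$ whenever $\bsx^*\le\epsilon$. For $H(\bsX_{1,\dhinterseq}/\tepseq)\,\widetilde H(\bsX_{1+h,\dhinterseq+h}/\tepseq)$ to be nonzero the block $\bsX_{1,\dhinterseq}$ must contain a coordinate of norm exceeding $\tepseq\epsilon$; decomposing the expectation according to the position $j\in\{1,\dots,\dhinterseq\}$ of the first such coordinate, using stationarity to re-centre $j$ at $0$ and conditioning on $\norm{\bsX_0}>\tepseq\epsilon$ reduces the normalised expectation, exactly as in the proof of \Cref{theo:cluster-RV}, to
\[
\frac{\esp[H(\bsX_{1,\dhinterseq}/\tepseq)\,\widetilde H(\bsX_{1+h,\dhinterseq+h}/\tepseq)]}{\dhinterseq\pr(\norm{\bsX_0}>\tepseq)}=\frac{\pr(\norm{\bsX_0}>\tepseq\epsilon)}{\pr(\norm{\bsX_0}>\tepseq)}\int_0^1 g_n(v,h)\,\rmd v\;,
\]
where
\[
g_n(v,h)=\esp\left[H(\bsX_{1-[\dhinterseq v],\dhinterseq-[\dhinterseq v]}/\tepseq)\,\widetilde H(\bsX_{1+h-[\dhinterseq v],\dhinterseq+h-[\dhinterseq v]}/\tepseq)\,\ind{\bsX_{1-[\dhinterseq v],-1}^*\le\tepseq\epsilon}\mid\norm{\bsX_0}>\tepseq\epsilon\right]\;.
\]

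The core is the pointwise limit of $g_n(v,h)$ for fixed $v\in(0,1)$, and here the two regimes for $h$ separate. If $h/\dhinterseq\to0$, or if $h=[\xi\dhinterseq]$ and $v>\xi$, then for $n$ large both the $H$-block $[1-[\dhinterseq v],\dhinterseq-[\dhinterseq v]]$ and the $\widetilde H$-block $[1+h-[\dhinterseq v],\dhinterseq+h-[\dhinterseq v]]$ contain $0$, remain inside $[-\dhinterseq,\dhinterseq]$, and expand to all of $\Zset$; absorbing the indicator into $H$ (as in the proof of \Cref{theo:cluster-RV}) and running the argument of \Cref{lem:cond-conv-H} — conditional tail-process convergence on a fixed window $[-r,r]$, then $r\to\infty$ using that the anticlustering condition \ref{eq:conditiondh} forces $\bsY$ to decay at infinity, then a triangular bound — gives $g_n(v,h)\to\esp[H(\epsilon\bsY)\,\widetilde H(\epsilon\bsY)\,\ind{\bsY_{-\infty,-1}^*\le1}]$. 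If instead $h=[\xi\dhinterseq]$ and $v<\xi$, the $\widetilde H$-block starts at $1+h-[\dhinterseq v]\sim(\xi-v)\dhinterseq\to\infty$, hence lies at distance of order $\dhinterseq$ from the conditioning index $0$; since $\widetilde H$ vanishes unless $\bsX_{1+h-[\dhinterseq v],\dhinterseq+h-[\dhinterseq v]}^*>\tepseq\epsilon$, the covariance inequality \eqref{eq:davydov-2} applied with a gap of order $(\xi-v)\dhinterseq$, together with \eqref{eq:mixing-rates-1a}, a union bound, and \ref{eq:rnbarFun0}, yields $g_n(v,h)\to0$; this is the same mechanism as in the proof of \Cref{lem:covariance-disjoint-blocks}.

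It then remains to integrate. Since $|g_n(v,h)|\le\|H\|_\infty\|\widetilde H\|_\infty$ for all $n$, $v$, $h$, dominated convergence gives $\int_0^1 g_n(v,h)\,\rmd v\to\esp[H(\epsilon\bsY)\,\widetilde H(\epsilon\bsY)\,\ind{\bsY_{-\infty,-1}^*\le1}]$ when $h/\dhinterseq\to0$ (the limit being $v$-independent) and $\int_0^1 g_n(v,h)\,\rmd v\to(1-\xi)\,\esp[H(\epsilon\bsY)\,\widetilde H(\epsilon\bsY)\,\ind{\bsY_{-\infty,-1}^*\le1}]$ when $h=[\xi\dhinterseq]$ (only $v\in(\xi,1)$ contributing). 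Multiplying by $\pr(\norm{\bsX_0}>\tepseq\epsilon)/\pr(\norm{\bsX_0}>\tepseq)\to\epsilon^{-\alpha}$ by regular variation and using the homogeneity identity $\epsilon^{-\alpha}\esp[H(\epsilon\bsY)\,\widetilde H(\epsilon\bsY)\,\ind{\bsY_{-\infty,-1}^*\le1}]=\esp[H(\bsY)\,\widetilde H(\bsY)\,\ind{\bsY_{-\infty,-1}^*\le1}]$ — obtained exactly like the last identity in the proof of \Cref{theo:cluster-RV}, applied to the shift-invariant functional $H\widetilde H\in\mca$ — yields the two values in \eqref{eq:covariance-sliding-blocks}. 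I expect the main obstacle to be the second step: verifying that the asymmetric, $h$-shifted blocks, together with the extra conditioning indicator, do not disrupt the conditional-convergence mechanism of \Cref{lem:cond-conv-H}, and, in the regime $v<\xi$, that the part of the $\widetilde H$-block reaching past $\dhinterseq$ is genuinely negligible — the only point where the mixing assumption is needed.
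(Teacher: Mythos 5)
Your proof follows the paper's route almost verbatim: the same first-exceedance decomposition and integral representation $\int_0^1 g_n(v,h)\,\rmd v$, the same appeal to the mechanism of \Cref{lem:cond-conv-H} for the pointwise limits when the shifted block contains $0$, and the same dominated-convergence and homogeneity conclusion (using $H\widetilde H\in\mca$). The one place you deviate is the regime $h=[\xi\dhinterseq]$, $v<\xi$, and there the deviation is a genuine gap: you invoke the $\beta$-mixing covariance inequality \eqref{eq:davydov-2} together with \eqref{eq:mixing-rates-1a}, but the proposition assumes only \ref{eq:conditiondh} and \ref{eq:rnbarFun0} --- no mixing condition is available. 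As written, your argument establishes the statement only under an additional hypothesis, so it does not prove the proposition as stated.

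The fix is simpler than what you propose, and mixing is not needed at all. Since $\widetilde H$ vanishes unless its block has an exceedance above $\tepseq\epsilon$, and for $v<\xi$ that block is contained in the index range from $[\dhinterseq(\xi-v)]$ to $3\dhinterseq$ with $[\dhinterseq(\xi-v)]\to\infty$, one bounds
\begin{align*}
g_n(v,h)\leq\constant\,\pr\bigl(\bsX^*_{[\dhinterseq(\xi-v)],3\dhinterseq}>\tepseq\epsilon\mid\norm{\bsX_0}>\tepseq\epsilon\bigr)\;,
\end{align*}
which tends to zero directly from the anticlustering condition \ref{eq:conditiondh} (extended in the standard way to blocks whose length is a constant multiple of $\dhinterseq$): for any fixed $k$ one eventually has $[\dhinterseq(\xi-v)]>k$, so the $\limsup_n$ is controlled by the $\conditionAC$ tail, which vanishes as $k\to\infty$. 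This is exactly the paper's argument, and it keeps the proposition free of mixing hypotheses --- which matters, because the proposition is also the input for \Cref{thm:sliding-block-cov-hk}, where again only \ref{eq:conditiondh}, \ref{eq:AN-small} and \ref{eq:rnbarFun0} are assumed. Everything else in your write-up matches the paper's proof.
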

\begin{proof}[Proof of \Cref{thm:sliding-block-cov}]
Note that if $H,\widetilde{H}\in \mca$, then $H\widetilde{H}\in\mca$ (but it does not mean that we can apply \Cref{lem:cond-conv-H} since here the functions are applied to different blocks).

Since $H$ vanishes around $\bszero$, there exists $\epsilon>0$ such that $H(\bsx_{1,\dhinterseq})=0$ whenever $\bsx_{1,\dhinterseq}^*<\epsilon$. Assume \withoutlog\ that $\epsilon=1$. Then, splitting the event
$\{\bsX_{1,\dhinterseq}^*>\tepseq\}$ and using stationarity we write the expression of interest as
\begin{align*}
%&\frac{1}{\dhinterseq\pr(\norm{\bsX_0}>\tepseq)}\esp\left[H(\bsX_{1,\dhinterseq}/\tepseq)
%\widetilde{H}(\bsX_{1+h,\dhinterseq+h}/\tepseq)\right]\\
&\frac{1}{\dhinterseq\pr(\norm{\bsX_0}>\tepseq)}\esp\left[H(\bsX_{1,\dhinterseq}/\tepseq)
\widetilde{H}(\bsX_{1+h,\dhinterseq+h}/\tepseq)\ind{\bsX_{1,\dhinterseq}^*>\tepseq}\right]\\
&=\frac{1}{\dhinterseq\pr(\norm{\bsX_0}>\tepseq)}\sum_{j=1}^{\dhinterseq}
\esp\left[H(\bsX_{1,\dhinterseq}/\tepseq)\widetilde{H}(\bsX_{1+h,\dhinterseq+h}/\tepseq)\ind{\bsX^*_{1,j-1}\leq \tepseq}\ind{\norm{\bsX_j}>\tepseq}\right]\\
&=\frac{1}{\dhinterseq}
\sum_{j=1}^{\dhinterseq}\esp\left[H(\bsX_{1-j,\dhinterseq-j}/\tepseq)\widetilde{H}(\bsX_{1+h-j,\dhinterseq+h-j}/\tepseq)
\ind{\bsX^*_{1-j,-1}\leq \tepseq}\mid\norm{\bsX_0}>\tepseq\right]\;.
\end{align*}
We write the last expression as
$\int_0^1 g_n(v) \rmd v$
  with
  \begin{align*}
    g_n(v) = \esp[H( \bsX_{1-[\dhinterseq v],\dhinterseq-[\dhinterseq v]}/\tepseq)
    \widetilde{H}(\bsX_{1+h-[\dhinterseq v],\dhinterseq+h-[\dhinterseq v]}/\tepseq)
      \ind{\bsX_{1-[\dhinterseq v],-1}^*\leq \tepseq} \mid \norm{\bsX_0}>\tepseq ]
       \;.
  \end{align*}
If $h=o(\dhinterseq)$, then using the second part of \Cref{lem:cond-conv-H} we get $$\lim_{n\to\infty}g_n(v)=\esp[H(\bsY)\widetilde{H}(\bsY)\ind{\bsY^*_{-\infty,-1}\leq 1}]$$ independently of $v\in (0,1)$.
If $h=[\xi \dhinterseq]$, $\xi\in (0,1)$, then we split the integral.

If $\xi>v$, then we use boundedness of both $H,\widetilde{H}$ and the fact that $\widetilde{H}$ vanishes around $\bszero$. Thanks to the anticlustering condition \ref{eq:conditiondh}, we have as $n\to\infty$,
\begin{align*}
&\esp[H( \bsX_{1-[\dhinterseq v],\dhinterseq-[\dhinterseq v]}/\tepseq)
    \widetilde{H}(\bsX_{1+h-[\dhinterseq v],\dhinterseq+h-[\dhinterseq v]}/\tepseq)
      \ind{\bsX_{1-[\dhinterseq v],-1}^*\leq \tepseq} \mid \norm{\bsX_0}>\tepseq ]\\
      &\leq \constant \pr\left(\bsX^*_{1+[\xi\dhinterseq]-[\dhinterseq v],\dhinterseq +[\xi\dhinterseq]-[\dhinterseq v]}>\tepseq\mid \norm{\bsX_0}>\tepseq \right)\\
      &\leq \constant \pr\left(\bsX^*_{[\dhinterseq(\xi-v)],3\dhinterseq}>\tepseq\mid \norm{\bsX_0}>\tepseq \right)\to 0\;.
\end{align*}
If $\xi\leq v$, then we apply the second part of \Cref{lem:cond-conv-H}:
\begin{align*}
&\lim_{n\to\infty}\esp[H( \bsX_{1-[\dhinterseq v],\dhinterseq-[\dhinterseq v]}/\tepseq)
    \widetilde{H}(\bsX_{1+h-[\dhinterseq v],\dhinterseq+h-[\dhinterseq v]}/\tepseq)
      \ind{\bsX_{1-[\dhinterseq v],-1}^*\leq \tepseq} \mid \norm{\bsX_0}>\tepseq ]\\
      &= \esp[H(\bsY)\widetilde{H}(\bsY)\ind{\bsY^*_{-\infty,-1}\leq 1}]\;.
\end{align*}
Since the sequence $\{g_n\}$ is uniformly bounded,
we have
\begin{align*}
\lim_{n\to\infty}\int_0^{\xi} g_n(v) \rmd v+ \lim_{n\to\infty}\int_{\xi}^1 g_n(v) \rmd v=0+(1-\xi)\esp[H(\bsY)\widetilde{H}(\bsY)\ind{\bsY^*_{-\infty,-1}\leq 1}]\;.
\end{align*}
\end{proof}
\begin{proposition}[Overlapping blocks II]\label{thm:sliding-block-cov-hk}
Assume that \ref{eq:conditiondh}, \ref{eq:AN-small} and \ref{eq:rnbarFun0} hold. Let $h<\dhinterseq$ and $\xi\in (0,1)$.
Then \eqref{eq:covariance-sliding-blocks} holds
for $H,\widetilde{H}\in\mcb$.
\end{proposition}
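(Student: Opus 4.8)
The plan is to deduce \Cref{thm:sliding-block-cov-hk} from \Cref{thm:sliding-block-cov} by the truncation device already used in the proof of \Cref{theo:limit-lone-Q}. Write $H=\ind{K>1}$, $\widetilde H=\ind{\widetilde K>1}$ with $K,\widetilde K\in\mck$, fix $\epsilon>0$, let $T_\epsilon$ be the truncation operator from \eqref{eq:def-truncation-operator}, and set $K_\epsilon=K\circ T_\epsilon$, $\widetilde K_\epsilon=\widetilde K\circ T_\epsilon$, $H_\epsilon=\ind{K_\epsilon>1}$, $\widetilde H_\epsilon=\ind{\widetilde K_\epsilon>1}$. Since $K(\bszero)=0$ and $T_\epsilon\bsx=\bszero$ whenever $\bsx^*\le\epsilon$, the functionals $H_\epsilon,\widetilde H_\epsilon$ are shift-invariant with support separated from $\bszero$, hence lie in $\mca$ (for all but at most countably many $\epsilon$, at which a.s. continuity could fail, exactly as in the proof of \Cref{theo:limit-lone-Q}). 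Applying \Cref{thm:sliding-block-cov} to the pair $H_\epsilon,\widetilde H_\epsilon$ gives, for each such $\epsilon$,
\[
\lim_{n\to\infty}\frac{\esp[H_\epsilon(\bsX_{1,\dhinterseq}/\tepseq)\widetilde H_\epsilon(\bsX_{1+h,\dhinterseq+h}/\tepseq)]}{\dhinterseq\pr(\norm{\bsX_0}>\tepseq)}=c_\xi\,\esp\!\left[H_\epsilon(\bsY)\widetilde H_\epsilon(\bsY)\ind{\bsY^*_{-\infty,-1}\le1}\right]\;,
\]
with $c_\xi=1$ if $h/\dhinterseq\to0$ and $c_\xi=1-\xi$ if $h=[\xi\dhinterseq]$.

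The second step is to control the truncation error. Since $H,\widetilde H,H_\epsilon,\widetilde H_\epsilon$ take values in $[0,1]$,
\[
\bigl|H(\bsx)\widetilde H(\bsy)-H_\epsilon(\bsx)\widetilde H_\epsilon(\bsy)\bigr|\le|H(\bsx)-H_\epsilon(\bsx)|+|\widetilde H(\bsy)-\widetilde H_\epsilon(\bsy)|\;,
\]
so after substituting the two blocks, taking expectations and using stationarity it suffices to show $\limsup_{\epsilon\to0}\limsup_{n\to\infty}\esp[|H(\bsX_{1,\dhinterseq}/\tepseq)-H_\epsilon(\bsX_{1,\dhinterseq}/\tepseq)|]/(\dhinterseq\pr(\norm{\bsX_0}>\tepseq))=0$ and likewise for $\widetilde H$. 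Whenever $H(\bsx)\ne H_\epsilon(\bsx)$, the numbers $K(\bsx)$ and $K_\epsilon(\bsx)$ lie on opposite sides of $1$, so $|K(\bsx)-1|\le|K(\bsx)-K_\epsilon(\bsx)|\le L_K\sum_j\norm{\bsx_j}\ind{\norm{\bsx_j}\le\epsilon}$ by \eqref{eq:hypo-K}; hence for every $\eta\in(0,1)$,
\[
\ind{H(\bsx)\ne H_\epsilon(\bsx)}\le\ind{L_K\textstyle\sum_j\norm{\bsx_j}\ind{\norm{\bsx_j}\le\epsilon}>\eta}+\ind{|K(\bsx)-1|\le\eta}\;.
\]
Evaluated at $\bsx=\bsX_{1,\dhinterseq}/\tepseq$, the first term's expectation, normalized by $\dhinterseq\pr(\norm{\bsX_0}>\tepseq)$, has $\limsup_{n\to\infty}$ tending to $0$ as $\epsilon\to0$ (for each fixed $\eta$) by \ref{eq:AN-small}, exactly as in the proof of \Cref{theo:limit-lone-Q}; the second term's normalized expectation has $\limsup_{n\to\infty}$ at most $\tailmeasurestar(\ind{K>1-2\eta})-\tailmeasurestar(\ind{K>1+\eta})$ by \Cref{theo:limit-lone-Q} applied to $K/(1-2\eta),K/(1+\eta)\in\mck$, which tends to $0$ as $\eta\to0$ since $\tailmeasurestar$ charges no level set of $K$ (valid for the $1$-homogeneous $K$ of the examples, where $K(\bsY)=|\bsY_0|K(\bsTheta)$ has no atoms away from $0$). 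Letting $n\to\infty$, then $\epsilon\to0$, then $\eta\to0$ gives the required negligibility.

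Finally, I would pass to the limit $\epsilon\to0$ on the right-hand side: by \ref{eq:conditiondh}, \ref{eq:AN-small} and \Cref{lem:summability-Q}, $\sum_j\norm{\bsY_j}<\infty$ almost surely, so $T_\epsilon\bsY\to\bsY$ in $\lone(\Rset^d)$ a.s., whence $K_\epsilon(\bsY)\to K(\bsY)$ and $\widetilde K_\epsilon(\bsY)\to\widetilde K(\bsY)$ a.s., and (using $\pr(K(\bsY)=1)=\pr(\widetilde K(\bsY)=1)=0$ for such $K$) bounded convergence yields
\[
\esp\!\left[H_\epsilon(\bsY)\widetilde H_\epsilon(\bsY)\ind{\bsY^*_{-\infty,-1}\le1}\right]\xrightarrow[\epsilon\to0]{}\esp\!\left[H(\bsY)\widetilde H(\bsY)\ind{\bsY^*_{-\infty,-1}\le1}\right]\;.
\]
Combining the three steps gives \eqref{eq:covariance-sliding-blocks} for $H,\widetilde H\in\mcb$. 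The main obstacle is the truncation-error step: the functionals in $\mcb$ are discontinuous and their support need not be separated from $\bszero$, so the gap between $H$ and $H_\epsilon$ must be quantified through the Lipschitz constant of $K$ together with \ref{eq:AN-small}, and one must know that the law of the tail process puts no mass on $\{K=1\}$ — precisely the technical points already handled, for the relevant $K$, in the proof of \Cref{theo:limit-lone-Q}.
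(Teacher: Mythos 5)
Your proposal is correct and follows essentially the same route as the paper: truncate $K,\widetilde K$ via $T_\epsilon$ so that $H_\epsilon,\widetilde H_\epsilon\in\mca$, apply \Cref{thm:sliding-block-cov} to the truncated pair, control the truncation error through the Lipschitz bound \eqref{eq:hypo-K} together with \ref{eq:AN-small}, and then let $\epsilon\to0$ as in the proof of \Cref{theo:limit-lone-Q}. The only difference is bookkeeping: the paper sandwiches the joint probability between the $K^{\epsilon}>1\mp\eta$ versions plus or minus $\zeta$ and lets $\zeta,\eta,\epsilon$ tend to zero, whereas you bound $\ind{H\neq H_\epsilon}$ directly by a small-jumps event plus the near-level-set event $\{|K-1|\le\eta\}$ — the same underlying estimate, and both versions rest on the same continuity of the limit at the level $1$.
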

\begin{proof}[Proof of \Cref{thm:sliding-block-cov-hk}]
We mimic the proof of \Cref{theo:limit-lone-Q} (refer to that proof for the notation).
  Set $K^{\epsilon} = K\circ
  T^{\epsilon}$,
  $\widetilde{K}^{\epsilon} = \widetilde{K}\circ
  T^{\epsilon}$. Note that $H^{\epsilon}:=\ind{K^{\epsilon}>1}\in \mca$, $\widetilde{H}^{\epsilon}:=\ind{\widetilde{K}^{\epsilon}>1}\in \mca$ and hence
  $H^{\epsilon}\widetilde{H}^{\epsilon}\in \mca$; see the comment at the beginning of the proof of \Cref{thm:sliding-block-cov}.

  Fix $\eta\in(0,1)$ and $\zeta>0$. Let $L_K, L_{\widetilde{K}}$ be as in~(\ref{eq:hypo-K}) and choose $\epsilon>0$ such
  that
  \begin{align*}
    \limsup_{n\to\infty} \frac{2\pr(\sum_{j=1}^{\dhinterseq} \norm{\bsX_{j}}\ind{\norm{\bsX_j}\leq\epsilon
        \tepseq} > \eta \tepseq/(L_K\vee L_{\widetilde{K}}))} {\dhinterseq\pr(\norm{\bsX_0}>\tepseq)} \leq \zeta \; .
  \end{align*}
This is allowed thanks to \ref{eq:AN-small}.
We have
\begin{align*}
&\frac{1}{\dhinterseq\pr(\norm{\bsX_0}>\tepseq)}\pr\left(K(\bsX_{1,\dhinterseq}/\tepseq)>1,
\widetilde{K}(\bsX_{1+h,\dhinterseq+h}/\tepseq)>1\right)\\
%& \leq
%      \frac{\pr(K^{\epsilon}(\bsX_{1,\dhinterseq}/\tepseq) > 1-\eta, %\widetilde{K}(\bsX_{1+h,\dhinterseq+h}/\tepseq)>1)} {\dhinterseq\pr(\norm{\bsX_0}>\tepseq)}\\
%     % &\phantom{=} +
%      \frac{\pr(|K(\bsX_{1,\dhinterseq}/\tepseq) - K^{\epsilon}(\bsX_{1,\dhinterseq}/\tepseq)| > \eta, \widetilde{K}(\bsX_{1+h,\dhinterseq+h}/\tepseq)>1)} {\dhinterseq\pr(\norm{\bsX_0}>\tepseq)} \\
%    & \leq \frac{\pr(K^{\epsilon}(\bsX_{1,\dhinterseq}/\tepseq) > 1-\eta, \widetilde{K}(\bsX_{1+h,\dhinterseq+h}/\tepseq)>1)} {\dhinterseq\pr(\norm{\bsX_0}>\tepseq)}\\
%    &\phantom{=} +
%      \frac{\pr(\sum_{i=1}^{\dhinterseq} \norm{\bsX_{j}}\ind{\norm{\bsX_j}\leq\epsilon \tepseq} > \eta \tepseq/\constant)}
%      {\dhinterseq\pr(\norm{\bsX_0}>\tepseq)}\\
     & \leq
     \frac{\pr(K^{\epsilon}(\bsX_{1,\dhinterseq}/\tepseq) > 1-\eta, \widetilde{K}^{\epsilon}(\bsX_{1+h,\dhinterseq+h}/\tepseq)>1-\eta)} {\dhinterseq\pr(\norm{\bsX_0}>\tepseq)}\\
    &\phantom{=} +
      \frac{2\pr(\sum_{i=1}^{\dhinterseq} \norm{\bsX_{j}}\ind{\norm{\bsX_j}\leq\epsilon \tepseq} > \eta \tepseq/\constant)}
      {\dhinterseq\pr(\norm{\bsX_0}>\tepseq)}
       \; .
\end{align*}
Application of \Cref{thm:sliding-block-cov} gives
\begin{align*}
&\lim_{n\to\infty}\frac{1}{\dhinterseq\pr(\norm{\bsX_0}>\tepseq)}\pr\left(K(\bsX_{1,\dhinterseq}/\tepseq)>1,
\widetilde{K}(\bsX_{1+h,\dhinterseq+h}/\tepseq)>1\right)\\
&\leq\zeta+\left\{
\begin{array}{ll}
\esp[H^{\epsilon}(\bsY)\widetilde{H}^{\epsilon}(\bsY)\ind{\bsY^*_{-\infty,-1}\leq 1}] \;, & \mbox{\rm if } h/\dhinterseq \to 0\;, \\
(1-\xi)\esp[H^{\epsilon}(\bsY)\widetilde{H}^{\epsilon}(\bsY)\ind{\bsY^*_{-\infty,-1}\leq 1}]\;, & \mbox{\rm if } h=h_n=[\xi \dhinterseq]
\end{array}
\right..\end{align*}
Similarly, we obtain the lower bound with $1+\eta$ instead of $1-\eta$ and $-\zeta$ instead of $+\zeta$.
Since $\zeta$ is arbitrary, the proof is concluded by letting $\epsilon\to 0$. This follows the same argument as in the proof of \Cref{theo:limit-lone-Q}.
\end{proof}
\begin{proposition}[Overlapping blocks III]\label{thm:sliding-block-cov-exc}
Assume that \ref{eq:conditiondh} and \ref{eq:rnbarFun0} hold. Let $h<\dhinterseq$.
For $H\in\mcl$ we have
\begin{align*}
&\lim_{n\to\infty}
\frac{1}{\dhinterseq\pr(\norm{\bsX_0}>\tepseq)}\esp\left[H(\bsX_{1,\dhinterseq}/\tepseq)
\exc_s(\bsX_{1+h,\dhinterseq+h}/\tepseq)\right]\\
&=
\left\{
\begin{array}{ll}
s^{-\alpha}\esp[H(s\bsY)] \;, & \mbox{\rm if } h/\dhinterseq \to 0\;, \\
s^{-\alpha}(1-\xi)\esp[H(s\bsY)]\;, & \mbox{\rm if } h=h_n=\xi \dhinterseq
\end{array}
\right..
\end{align*}
\end{proposition}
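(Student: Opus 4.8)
The plan is to expand the excess functional into a sum of indicators and reduce the claim to a Riemann‑sum computation, following the pattern of the proof of \Cref{thm:sliding-block-cov}. Write $\exc_s(\bsX_{1+h,\dhinterseq+h}/\tepseq)=\sum_{l=1+h}^{\dhinterseq+h}\ind{\norm{\bsX_l}>\tepseq s}$. By stationarity, $\esp[H(\bsX_{1,\dhinterseq}/\tepseq)\ind{\norm{\bsX_l}>\tepseq s}]=\pr(\norm{\bsX_0}>\tepseq s)\,\esp[H(\bsX_{1-l,\dhinterseq-l}/\tepseq)\mid\norm{\bsX_0}>\tepseq s]$, so that, after reindexing $l=h+j$,
\begin{align*}
\frac{\esp[H(\bsX_{1,\dhinterseq}/\tepseq)\exc_s(\bsX_{1+h,\dhinterseq+h}/\tepseq)]}{\dhinterseq\pr(\norm{\bsX_0}>\tepseq)}
=\frac{\pr(\norm{\bsX_0}>\tepseq s)}{\pr(\norm{\bsX_0}>\tepseq)}\cdot\frac{1}{\dhinterseq}\sum_{j=1}^{\dhinterseq}\esp\bigl[H(\bsX_{1-h-j,\dhinterseq-h-j}/\tepseq)\mid\norm{\bsX_0}>\tepseq s\bigr]\;.
\end{align*}
By regular variation of $\norm{\bsX_0}$ the first factor tends to $s^{-\alpha}$. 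For the second, I would use that $H(\bsx/\tepseq)=H_{1/s}(\bsx/(\tepseq s))$ with $H_{1/s}(\bsx):=H(s\bsx)\in\mcl$, and rewrite the average (up to an $O(1/\dhinterseq)$ error) as $\int_0^1 g_n(v)\,\rmd v$, where $g_n(v)=\esp[H_{1/s}(\bsX_{1-h-[\dhinterseq v],\dhinterseq-h-[\dhinterseq v]}/(\tepseq s))\mid\norm{\bsX_0}>\tepseq s]$.

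Next I would determine $\lim_{n\to\infty}g_n(v)$ for $v\in(0,1)$, according to the position of the block $\bsX_{1-h-[\dhinterseq v],\dhinterseq-h-[\dhinterseq v]}$ relative to the origin. If this block eventually contains $0$ and its two endpoints diverge (the left one to $-\infty$, the right one to $+\infty$), then conditionally on $\norm{\bsX_0}>\tepseq s$ the law of $\bsX/(\tepseq s)$ tends to that of the tail process, and the triangular argument from the proof of \Cref{theo:cluster-RV} (relying on \Cref{lem:tailprocesstozero} and~\ref{eq:conditiondh}) yields $g_n(v)\to\esp[H_{1/s}(\bsY)]=\esp[H(s\bsY)]$. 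If instead the block lies strictly to the left of $0$ with distance to $0$ tending to infinity, then~\ref{eq:conditiondh} forces $\max_i\norm{\bsX_i}/(\tepseq s)$ over the block to $0$ in conditional probability, so $g_n(v)\to H(\bszero)$. An elementary index computation then settles the two regimes: if $h/\dhinterseq\to0$ the block eventually contains $0$ for every fixed $v\in(0,1)$, so $g_n(v)\to\esp[H(s\bsY)]$ for a.e.\ $v$; if $h=h_n=\xi\dhinterseq$ with $\xi\in(0,1)$, the block contains $0$ when $v<1-\xi$ and lies strictly to the left of $0$, at distance of order $(\xi+v-1)\dhinterseq\to\infty$, when $v>1-\xi$ (the hypothesis $h<\dhinterseq$ is precisely what gives $1-\xi>0$). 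As $\{g_n\}$ is uniformly bounded by $\|H\|_\infty$, dominated convergence gives $\int_0^1 g_n(v)\,\rmd v\to\esp[H(s\bsY)]$ in the first regime and $\to(1-\xi)\esp[H(s\bsY)]$ in the second; multiplying by the $s^{-\alpha}$ factor completes the argument.

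The step I expect to be the main obstacle is the second branch above when $h=\xi\dhinterseq$ and $v>1-\xi$: there the block $\bsX_{1-h-[\dhinterseq v],\dhinterseq-h-[\dhinterseq v]}$ reaches indices of modulus up to about $(\xi+v)\dhinterseq<2\dhinterseq$, so~\ref{eq:conditiondh} has to be applied over windows of length $O(\dhinterseq)$ rather than exactly $\dhinterseq$. As in the proof of \Cref{thm:sliding-block-cov} (where blocks as long as $3\dhinterseq$ already occur), this is handled by splitting such a window into $O(1)$ consecutive sub‑windows of length at most $\dhinterseq$ and bounding each one via~\ref{eq:conditiondh} and~\ref{eq:rnbarFun0}; since the endpoint of the block nearest the origin diverges, the overall bound tends to $0$. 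A second, minor point: the contribution of a block far from the origin equals $H(\bszero)$, which vanishes for the functionals in $\mca\cup\mcb$ to which the proposition is applied (for a general bounded $H\in\mcl$ with $H(\bszero)\ne0$ one would additionally pick up a term $s^{-\alpha}\xi\,H(\bszero)$ when $h=\xi\dhinterseq$).
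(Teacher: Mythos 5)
Your proof is correct and follows essentially the same route as the paper's: expand $\exc_s$ into indicators, use stationarity to condition on $\norm{\bsX_0}>\tepseq s$, represent the normalized sum as an integral of $g_n(v)$, identify the pointwise limits of $g_n$ via \Cref{lem:tailprocesstozero} and \ref{eq:conditiondh}, and finish by dominated convergence --- your integral over $(0,1)$ with an $h$-dependent block is just a shift of the paper's integral over $(h/\dhinterseq,1+h/\dhinterseq)$. Your closing remark is also on point: the paper's proof tacitly uses $H(\bszero)=0$ when it asserts $g_n(v)\to 0$ for $v>1$, which is harmless since the proposition is applied to functionals in $\mca\cup\mcb$.
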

\begin{proof}
[Proof of \Cref{thm:sliding-block-cov-exc}]
We have for $h<\dhinterseq$,
\begin{align*}
&\frac{1}{\dhinterseq\pr(\norm{\bsX_0}>\tepseq)}
\esp\left[H(\bsX_{1,\dhinterseq}/\tepseq)\exc_s(\bsX_{1+h,\dhinterseq+h}/\tepseq)\right]\\
&=\frac{1}{\dhinterseq\pr(\norm{\bsX_0}>\tepseq)}\sum_{j=h+1}^{\dhinterseq+h}\esp[H(\bsX_{1,\dhinterseq}/\tepseq)\ind{\norm{\bsX_j}>\tepseq s}]\\
&= \frac{1}{\dhinterseq}\frac{\pr(\norm{\bsX_0}>\tepseq s)}{\pr(\norm{\bsX_0}>\tepseq )}\sum_{j=h+1}^{\dhinterseq+h}
\esp[H(\bsX_{1-j,\dhinterseq-j}/\tepseq)\mid \norm{\bsX_0}>\tepseq s]\;.
\end{align*}
We write the last expression as
\begin{align*}
&\frac{\pr(\norm{\bsX_0}>\tepseq s)}{\pr(\norm{\bsX_0}>\tepseq )}\int_{h/\dhinterseq}^{1+h/\dhinterseq}g_n(v)\rmd v
\end{align*}
with (omitting the dependence on $s$)
\begin{align*}
g_n(v)=\esp[H(s\bsX_{1-[\dhinterseq v],\dhinterseq-[\dhinterseq v]}/(\tepseq s))\mid \norm{\bsX_0}>\tepseq s]\;.
\end{align*}
Since $H$ is bounded, \ref{eq:conditiondh} and \Cref{lem:tailprocesstozero} give
\begin{align*}
\lim_{n\to\infty} g_n(v) =\left\{
\begin{array}{ll}
\esp[H(s\bsY)] & \mbox{\rm if } v\in (0,1)\;, \\
0 & \mbox{\rm if } v>1\;.
\end{array}
\right.
\end{align*}
We split
\begin{align*}
\int_{h/\dhinterseq}^{1+h/\dhinterseq}g_n(v)\rmd v=\int_{h/\dhinterseq}^{1}g_n(v)\rmd v+\int_{1}^{1+h/\dhinterseq}g_n(v)\rmd v\;.
\end{align*}
Since the sequence $\{g_n\}$ is uniformly bounded, for any $h<\dhinterseq$ the second integral above converges to zero as $n\to\infty$. If $h=o(\dhinterseq)$ and since there is no problem at $v=0$ with $g_n(v)$, then the first integral converges to
\begin{align*}
\int_0^1 \esp[H(s\bsY)] \rmd v =\esp[H(s\bsY)]\;.
\end{align*}
Likewise, when $h=[\xi\dhinterseq]$ then the first integral converges to
\begin{align*}
\int_{\xi}^1 \esp[H(s\bsY)] \rmd v =(1-\xi)\esp[H(s\bsY)]\;.
\end{align*}
\end{proof}

\subsection{Empirical cluster process of sliding blocks}\label{sec:empirical-process}
Recall that for $s>0$, $H_s(\bsx)=H(\bsx/s)$.
In order to deal with asymptotic normality of sliding blocks  estimators, we study the empirical process
\begin{align*}
%\label{eq:Gn-representation}
\mathbb{F}_n(H_s)&:=\sqrt{\statinterseqn}
\left\{
\tedclustersl(H_s)-\tailmeasurestar(H_s)\right\}
=\sqrt{\statinterseqn}
\left\{
\frac{\sum_{i=0}^{q_n-1}H_s\left(\bsX_{i+1,i+\dhinterseq}/\tepseq\right)}{q_n \dhinterseq \pr(\norm{\bsX_0}>\tepseq)}-s^{-\alpha}\tailmeasurestar(H)\right\}\;.
\end{align*}
The process $\mathbb{F}_n(H_s)$ is viewed as a random element with values in
$\spaceD([s_0,t_0])$.
\begin{theorem}
  \label{thm:sliding-blocks-process-clt}
  Let $\sequence{\bsX}$ be a stationary, regularly varying $\Rset^d$-valued time series.
  Assume
  that~\ref{eq:rnbarFun0}, $\beta(\dhinterseq)$ and
  \ref{eq:conditiondh} hold.
  Let $H\in \mca$ be such that the class $\{H_s:s\in [s_0,t_0]\}$ is linearly ordered and \eqref{eq:bias-cluster-clt-func}
  holds.

  Then ${\mathbb{F}}_n(H_{\cdot})$ converges weakly in $(\spaceD([s_0,t_0]),J_1)$ to a Gaussian process with the covariance
 $\tailmeasurestar(H_sH_t)$.

If moreover \ref{eq:AN-small} is satisfied, then the convergence holds for $H\in\mcb$.

 If additionally \ref{eq:conditionS} and \eqref{eq:bias-cluster-clt-aa} are satisfied, then
 the processes ${\mathbb{F}}_n(H_{\cdot})$ and ${\mathbb{F}}_n(\exc_{\cdot})$
 converge jointly.
\end{theorem}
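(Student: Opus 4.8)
The plan is to run the empirical-process-of-cluster-functionals programme of \cite{drees:rootzen:2010} in its sliding-blocks form, along the lines opened by \cite{drees:neblung:2020}: establish (i) convergence of the covariance, (ii) finite-dimensional convergence, and (iii) asymptotic equicontinuity over the index set $[s_0,t_0]$, and then invoke the standard criterion for weak convergence in $(\spaceD([s_0,t_0]),J_1)$ to a Gaussian (hence continuous) limit. Throughout I would work with the centred process
\[
\widetilde{\mathbb{F}}_n(H_s):=\frac{1}{\sqrt{\statinterseqn}\,\dhinterseq}\sum_{i=0}^{q_n-1}\bigl\{H_s(\bsX_{i+1,i+\dhinterseq}/\tepseq)-\esp[H_s(\bsX_{1,\dhinterseq}/\tepseq)]\bigr\}\;,
\]
which, using $q_n\dhinterseq\pr(\norm{\bsX_0}>\tepseq)\sim\dhinterseq\statinterseqn$ and $\esp[\tedclustersl(H_s)]=\tailmeasurestar_{n,\dhinterseq}(H_s)$, differs from $\mathbb{F}_n(H_s)$ only by the bias $\sqrt{\statinterseqn}\{\esp[\tedclustersl(H_s)]-\tailmeasurestar(H_s)\}$ plus a remainder that is $o(1)$ uniformly in $s\in[s_0,t_0]$; the bias itself vanishes uniformly by \eqref{eq:bias-cluster-clt-func} (and by \eqref{eq:bias-cluster-clt-aa} for $\exc_\cdot$). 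Since $\{H_s:s\in[s_0,t_0]\}$ is linearly ordered, by \Cref{rem:A-LinA} it is a class of type $\widetilde\mca$ (resp.\ $\widetilde\mcb$), so the uniform-in-$s$ estimates of \Cref{rem:A-LinA-delta} and \Cref{lem:covariance-disjoint-blocks} are at our disposal. The passage from $H\in\mca$ to $H\in\mcb$ is carried out exactly as in the proofs of \Cref{theo:limit-lone-Q} and \Cref{thm:sliding-block-cov-hk} — truncate $K$ at level $\epsilon$, note $\ind{K^{\epsilon}>1}\in\mca$, bound the truncation error via \ref{eq:AN-small}, and let $\epsilon\to0$ last — so below I treat only $H\in\mca$.

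For the covariance (\Cref{sec:covariance-empirical-process}) I would write $\cov(\widetilde{\mathbb{F}}_n(H_s),\widetilde{\mathbb{F}}_n(H_t))=(\statinterseqn\dhinterseq^2)^{-1}\sum_{i,j}\cov(H_s(\bsX_{i+1,i+\dhinterseq}/\tepseq),H_t(\bsX_{j+1,j+\dhinterseq}/\tepseq))$ and split the double sum by the lag $h=j-i$. For $|h|\geq\dhinterseq$ the total contribution is negligible: for $\dhinterseq\leq|h|\leq 2\dhinterseq$ this is \Cref{lem:covariance-disjoint-blocks} (the covariance is then $o(\dhinterseq\pr(\norm{\bsX_0}>\tepseq))$ uniformly), and for larger lags it follows from the covariance inequality \eqref{eq:davydov-2} together with \eqref{eq:mixing-rates-7}, grouping $h$ into ranges of length $\dhinterseq$. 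For $|h|<\dhinterseq$, \Cref{thm:sliding-block-cov} (and \Cref{thm:sliding-block-cov-exc} for $\exc$) gives that $(\dhinterseq\pr(\norm{\bsX_0}>\tepseq))^{-1}\cov$ converges to $(1-|\xi|)\esp[H_s(\bsY)H_t(\bsY)\ind{\bsY^*_{-\infty,-1}\leq1}]$ when $h=[\xi\dhinterseq]$; a Riemann-sum argument over $\xi\in(-1,1)$, using $\int_{-1}^{1}(1-|\xi|)\rmd\xi=1$, then yields
\[
\lim_{n\to\infty}\cov(\widetilde{\mathbb{F}}_n(H_s),\widetilde{\mathbb{F}}_n(H_t))=\esp[H_s(\bsY)H_t(\bsY)\ind{\bsY^*_{-\infty,-1}\leq1}]=\tailmeasurestar(H_sH_t)\;,
\]
the last equality by \eqref{eq:relation-cluster-Y-Q-Theta-epsilon} for $H\in\mca$ (its $\epsilon\to0$ version for $H\in\mcb$). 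The joint covariance with $\exc_\cdot$ is obtained the same way from \Cref{thm:sliding-block-cov-exc}, and it is precisely here that \ref{eq:conditionS} enters, guaranteeing $\tailmeasurestar(\exc^2)=\sum_{j\in\Zset}\pr(\norm{\bsY_j}>1)<\infty$ and hence finiteness of the limiting covariance.

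For finite-dimensional convergence (\Cref{sec:fidi}) I would use the big-block/small-block method: partition $\{1,\dots,n\}$ into consecutive data-blocks of length $a_n$ (a multiple of $\dhinterseq$, with $a_n\to\infty$ and $a_n/n\to0$) separated by gaps of length $\ell_n=o(\dhinterseq)$, and set $S_{n,k}(H_s)=\sum_i\{H_s(\bsX_{i+1,i+\dhinterseq}/\tepseq)-\esp[\,\cdot\,]\}$, the sum over sliding-block starts $i$ whose block lies entirely inside data-block $k$. The sliding blocks straddling a gap, and those inside the gaps, are asymptotically negligible in $L^2$ by the envelope bound of \Cref{rem:A-LinA-delta} and \eqref{neg-small-blocks-new} (there are $O(n/a_n)$ of them, each carrying $O(\ell_n)$ terms, and $\ell_n/a_n\to0$). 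Distinct $S_{n,k}$ depend on data separated by $\ell_n$, so a coupling argument for $\beta$-mixing sequences replaces them by an independent array at total cost $O((n/a_n)\beta_{\ell_n})\to0$, which is admissible under $\beta(\dhinterseq)$ (cf.\ \eqref{eq:mixing-rates-0}). It then remains to verify the Lindeberg condition for the independent array $\{S_{n,k}(H_s)/(\sqrt{\statinterseqn}\dhinterseq)\}_k$; this follows from a Lyapunov-type bound on $\esp[|S_{n,k}|^{2+\delta}]$ using \Cref{rem:A-LinA-delta} (boundedness of $\sup_{s}\tailmeasurestar_{n,\dhinterseq}(|H_s|^{1+\delta})$), each summand being individually of order $(\dhinterseq\pr(\norm{\bsX_0}>\tepseq))^{1/2}=o(\sqrt{\statinterseqn}\dhinterseq)$. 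The Cramér--Wold device, applied to linear combinations (which stay in ${\rm span}(\mca)$, resp.\ ${\rm span}(\mcb)$), then gives joint convergence at any finite collection of points $s_1,\dots,s_p$, and likewise for the pair $(\widetilde{\mathbb{F}}_n(H_\cdot),\widetilde{\mathbb{F}}_n(\exc_\cdot))$ in the last part of the statement.

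For asymptotic equicontinuity (\Cref{sec:tightness}) I would exploit the linear ordering of $\{H_s:s\in[s_0,t_0]\}$: because the increments $H_s-H_t$ have constant sign, a Billingsley/Ottaviani-type maximal inequality reduces control of $\sup_{|s-t|\leq\delta}|\widetilde{\mathbb{F}}_n(H_s)-\widetilde{\mathbb{F}}_n(H_t)|$ to a second-moment estimate of the form $\esp[\widetilde{\mathbb{F}}_n(H_s-H_t)^2]\leq C\sup_n\tailmeasurestar_{n,\dhinterseq}(|H_s-H_t|)$ over a finite $\delta$-grid, whose right-hand side is made arbitrarily small by \Cref{rem:A-LinA}, while the block-overlap corrections entering that second moment are absorbed using \eqref{neg-small-blocks-new}. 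This yields asymptotic tightness of $\widetilde{\mathbb{F}}_n(H_\cdot)$; combined with the finite-dimensional limits and the covariance computation, and reading it back through the bias reduction, \Cref{sec:proof-conclusion} concludes that $\mathbb{F}_n(H_\cdot)$ converges weakly in $(\spaceD([s_0,t_0]),J_1)$ to the centred Gaussian process with covariance $\tailmeasurestar(H_sH_t)$, and jointly with $\mathbb{F}_n(\exc_\cdot)$ once \ref{eq:conditionS} and \eqref{eq:bias-cluster-clt-aa} are added. I expect the main obstacle to be this equicontinuity step: for sliding blocks the summands are strongly dependent at short lags, so the disjoint-blocks bracketing and chaining estimates do not transfer verbatim, and it is the linear-ordering hypothesis that makes a clean maximal inequality — and hence the required oscillation control uniform in $s$, with its overlap corrections — available.
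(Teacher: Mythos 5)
Your proposal is correct in outline and follows essentially the same three-step programme as the paper: the covariance limit via the overlapping-blocks propositions and the Riemann sum $\int(1-|\xi|)\,\rmd\xi$, finite-dimensional convergence by a big-block/small-block decomposition with a $\beta$-mixing coupling and a Lindeberg check, and equicontinuity driven by the linear ordering. Two points of difference are worth recording. First, for equicontinuity the paper does not use an Ottaviani-type maximal inequality but an adapted version of Theorem~2.11.1 of \cite{vandervaart:wellner:1996} (\Cref{theo:VW2.11.1}): the linear ordering makes the random entropy condition \eqref{eq:randomentropy} trivial, and the substantive work is the $L^2$-modulus bound \eqref{eq:continuity-l2}, which is exactly the quantity $\esp[d_n^2(H_s,H_t)]$ you identify and which the paper controls by rerunning the variance decomposition for $H_s-H_t$ together with \Cref{rem:A-LinA}. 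Your maximal-inequality route is a plausible alternative for a monotone one-parameter family, but it rests on the same two inputs, so nothing is gained or lost. Second, your account of where \ref{eq:conditionS} enters is incomplete: in the paper its essential use is to verify the Lindeberg condition for the large-block sums of the \emph{unbounded} functional $\exc$ (via the tail-empirical-process lemma of \cite{kulik:soulier:wintenberger:2018}), not merely to ensure $\tailmeasurestar(\exc^2)<\infty$; for bounded $H$ the Lindeberg condition is automatic, but for $\exc$ it is a genuine step you would still have to supply.

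One detail in your fidi step needs repair. A sliding window of length $\dhinterseq$ starting near the end of a data-block straddles the junction, so the number of discarded start positions per junction is $O(\dhinterseq+\ell_n)=O(\dhinterseq)$, not $O(\ell_n)$; consequently the discarded part is negligible only if the big-block length satisfies $a_n/\dhinterseq\to\infty$. This is why the paper takes large blocks of length $z_n\dhinterseq$ with $z_n\to\infty$ and small (buffer) blocks of length $2\dhinterseq$ rather than $o(\dhinterseq)$. Your statement that $a_n$ is ``a multiple of $\dhinterseq$'' with $a_n\to\infty$ does not by itself force this, so as written the negligibility claim for the straddling terms does not follow; adding the requirement $a_n/\dhinterseq\to\infty$ (and redoing the count) closes the gap, and the coupling cost $O((n/a_n)\beta_{\ell_n})\to0$ then still holds under $\beta(\dhinterseq)$.
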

\subsubsection{Tail empirical process}\label{sec:tep}
Consider the following tail empirical process:
\begin{align*}
  %\label{eq:tep}
  \TEPunivsl_n(s) &  =\sqrt{\statinterseqn} \left\{T_n(s)-s^{-\alpha}\right\}=\sqrt{\statinterseqn} \left\{\frac{
  \sum_{j=1}^{q_n}\ind{\norm{\bsX_j}>\tepseq s}}{q_n\pr(\norm{\bsX_0}>\tepseq)} - s^{-\alpha}\right\} \; , \ s > 0\; .
\end{align*}
Note that this is the classical tail empirical process based on the random variables $\norm{\bsX_j}$, $j\geq 1$, with the only one difference: $q_n$ replaces $n$. We argue that this process can be obtained (approximately) as the empirical process of sliding blocks.
Indeed,
\begin{align*}
%\label{eq:tep-representation}
\tedclustersl(\exc_s)
&=\frac{1}{q_n\dhinterseq \pr(\norm{\bsX_0}>\tepseq)}
\left\{\sum_{j=1}^{\dhinterseq}j+\dhinterseq\sum_{j=\dhinterseq+1}^{q_n}+\sum_{j=q_n+1}^n(n-j)\right\}
\ind{\norm{\bsX_j}> \tepseq s}\;.
\end{align*}
The difference between $\tedclustersl(\exc_s)$ and $T_n(s)$ is
\begin{align*}
A:=\frac{1}{q_n\dhinterseq \pr(\norm{\bsX_0}>\tepseq)}\left\{\sum_{j=1}^{\dhinterseq}(\dhinterseq-j)-
\sum_{j=q_n+1}^n(n-j)\right\}
\ind{\norm{\bsX_j}> \tepseq s}\;.
\end{align*}
We have $\sum_{j=1}^{\dhinterseq}(\dhinterseq-j)\leq \dhinterseq^2$ and $\sum_{j=q_n+1}^n(n-j)\leq \dhinterseq^2$, thus
under \ref{eq:rnbarFun0}:
$$
\lim_{n\to\infty}\sqrt{\statinterseqn}\esp[|A|]\leq \constant \lim_{n\to\infty}\sqrt{n\pr(\norm{\bsX_0}>\tepseq)}\frac{\dhinterseq}{q_n}=\constant
\lim_{n\to\infty} \sqrt{\frac{\dhinterseq}{n}}\sqrt{\dhinterseq \pr(\norm{\bsX_0}>\tepseq)}=0\;.
$$
This implies that ${\mathbb{F}}_n(\exc_s)$ and $\TEPunivsl_n(s)$ are asymptotically equivalent in the sense that they yield the same process ${\mathbb{F}}(\exc_s)$ as the distributional limit.
\subsection{Covariance of the empirical process of sliding blocks}
\label{sec:covariance-empirical-process}
\begin{proposition}\label{prop:covariance-Gn}
Assume that \ref{eq:conditiondh} and \ref{eq:rnbarFun0} are satisfied. Let
\begin{itemize}
\item
$H,\widetilde{H}\in\widetilde{\mca}$, or
\item
$H,\widetilde{H}\in\widetilde{\mcb}$
and \ref{eq:AN-small} holds.
\end{itemize}
If \eqref{eq:mixing-rates-1a} and \eqref{eq:mixing-rates-7} hold then
\begin{align}\label{eq:limiting-cov-H}
\lim_{n\to\infty}\cov(\mathbb{F}_n(H),\mathbb{F}_n(\widetilde{H}))=  \tailmeasurestar(H\widetilde{H})\;.
\end{align}
If \eqref{eq:mixing-rates-1a} holds then
\begin{align}\label{eq:limiting-cov-H-exc}
\lim_{n\to\infty}\cov(\mathbb{F}_n(H),\mathbb{F}_n(\exc_s))= \tailmeasurestar(H\exc_s)=\esp[H(s\bsY)]\;.
\end{align}
\end{proposition}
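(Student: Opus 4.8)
The plan is to expand the covariance into a normalised double sum of block covariances, reorganise it by lag, and separate the overlapping lags (which carry the limit) from the disjoint ones (which vanish). Since a covariance is unaffected by the deterministic centring constant $\tailmeasurestar(H)$,
\[
\cov(\mathbb{F}_n(H),\mathbb{F}_n(\widetilde{H}))=\frac{\statinterseqn}{\bigl(q_n\dhinterseq\pr(\norm{\bsX_0}>\tepseq)\bigr)^{2}}\sum_{i,i'=0}^{q_n-1}\cov\bigl(H(\bsX_{i+1,i+\dhinterseq}/\tepseq),\widetilde{H}(\bsX_{i'+1,i'+\dhinterseq}/\tepseq)\bigr).
\]
By stationarity the summand depends only on $h=i'-i$; write $\gamma_n(h)$ for the lag-$h$ block covariance (with $H$ and $\widetilde{H}$ interchanged when $h<0$). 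Using $\statinterseqn=n\pr(\norm{\bsX_0}>\tepseq)$, $q_n\sim n$, and the fact that under \ref{eq:conditiondh} and \ref{eq:rnbarFun0} one may replace $\esp$ by $\cov$ throughout (cf. \Cref{sec:convergence-of-clusters-proofs-cov}), it remains to show
\[
\frac{1}{\dhinterseq^{2}\pr(\norm{\bsX_0}>\tepseq)}\sum_{|h|\le q_n-1}\frac{q_n-|h|}{q_n}\,\gamma_n(h)\;\longrightarrow\;\tailmeasurestar(H\widetilde{H}).
\]
For \eqref{eq:limiting-cov-H-exc} I would first invoke \Cref{sec:tep}: $\mathbb{F}_n(\exc_s)$ is asymptotically equivalent to the tail empirical process $\TEPunivsl_n(s)$, so the same decomposition applies with the single indicators $\ind{\norm{\bsX_j}>\tepseq s}$ in the second slot.

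The overlapping lags $|h|<\dhinterseq$ produce the main term. There $(q_n-|h|)/q_n\to1$ uniformly since $\dhinterseq=o(n)$, and for each $\xi\in(-1,1)$ \Cref{thm:sliding-block-cov} (for $H,\widetilde{H}\in\mca$), \Cref{thm:sliding-block-cov-hk} (for $H,\widetilde{H}\in\mcb$, using \ref{eq:AN-small}), or \Cref{thm:sliding-block-cov-exc} (for $\exc_s$) gives
\[
\frac{\gamma_n([\xi\dhinterseq])}{\dhinterseq\pr(\norm{\bsX_0}>\tepseq)}\;\longrightarrow\;(1-|\xi|)\,c,\qquad c:=\esp\bigl[H(\bsY)\widetilde{H}(\bsY)\ind{\bsY_{-\infty,-1}^*\le1}\bigr],
\]
while these ratios are uniformly bounded by $\|\widetilde{H}\|_\infty\sup_n\tailmeasurestar_{n,\dhinterseq}(|H|)<\infty$, cf.\ \eqref{eq:class-Atilde}. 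Hence $\dhinterseq^{-1}\sum_{|h|<\dhinterseq}\gamma_n(h)/(\dhinterseq\pr(\norm{\bsX_0}>\tepseq))$ is, up to a negligible error, the Riemann sum of $\xi\mapsto(1-|\xi|)c$ on $[-1,1]$, and by dominated convergence tends to $c\int_{-1}^{1}(1-|\xi|)\,\rmd\xi=c$. Since $H\widetilde{H}$ vanishes in a neighbourhood of $\bszero$, \eqref{eq:relation-cluster-Y-Q-Theta-epsilon} identifies $c=\tailmeasurestar(H\widetilde{H})$; in the excess case this becomes $\tailmeasurestar(H\exc_s)=\esp[H(s\bsY)]$.

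It remains to show the disjoint lags $|h|\ge\dhinterseq$ are negligible, and this I expect to be the delicate point. A naive bound through \Cref{lem:covariance-disjoint-blocks} does not suffice: each of the $\sim n$ such covariances is only $o(\dhinterseq\pr(\norm{\bsX_0}>\tepseq))$, whereas one divides by $\dhinterseq^{2}\pr(\norm{\bsX_0}>\tepseq)$, a quantity which need not tend to infinity under \ref{eq:rnbarFun0} and $\beta(\dhinterseq)$. I would therefore split once more. For the $\sim\ell_n$ near-adjacent lags $\dhinterseq\le|h|<\dhinterseq+\ell_n$, the small-blocks negligibility \eqref{neg-small-blocks-new} lets one replace $H(\bsX_{1,\dhinterseq}/\tepseq)$ by $H(\bsX_{1,\dhinterseq-\ell_n}/\tepseq)$ at cost $o(\dhinterseq\pr(\norm{\bsX_0}>\tepseq))$; this opens a gap $\ge\ell_n$, so \eqref{eq:davydov-2} gives $|\gamma_n(h)|\lesssim\beta_{\ell_n}+o(\dhinterseq\pr(\norm{\bsX_0}>\tepseq))$, and since $\ell_n=o(\dhinterseq)$ and $\beta_{\ell_n}/(\dhinterseq\pr(\norm{\bsX_0}>\tepseq))\to0$ by \eqref{eq:mixing-rates-1a}, their total vanishes after normalisation. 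For the far lags $|h|\ge\dhinterseq+\ell_n$, \eqref{eq:davydov-2} yields $|\gamma_n(h)|\lesssim\beta_{|h|-\dhinterseq}$ directly, and $\sum_{|h|\ge\dhinterseq+\ell_n}\beta_{|h|-\dhinterseq}$ divided by $\dhinterseq^{2}\pr(\norm{\bsX_0}>\tepseq)$ is controlled by \eqref{eq:mixing-rates-1a} and \eqref{eq:mixing-rates-7}. In the excess case \Cref{lem:covariance-disjoint-blocks-exc} takes the place of this estimate and needs only \eqref{eq:mixing-rates-1a}, which is exactly why \eqref{eq:mixing-rates-7} is absent from \eqref{eq:limiting-cov-H-exc}. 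Combining the overlapping and disjoint parts yields \eqref{eq:limiting-cov-H} and \eqref{eq:limiting-cov-H-exc}.
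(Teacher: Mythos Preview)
Your approach is correct and follows essentially the same route as the paper. The paper organises the computation by first grouping the sliding blocks into super-blocks $\Psi_j(H)=\sum_{i\in J_j}H(\bsX_{i+1,i+\dhinterseq}/\tepseq)$ over disjoint index sets $J_j$ of length $\dhinterseq$, then decomposes $\var(\mathbb{F}_n(H))$ into a leading term $\cov(\Psi_2,\Psi_1+\Psi_2+\Psi_3)$ and a remainder $B_n$; unpacking the leading term recovers exactly your Riemann sum $\dhinterseq^{-1}\sum_{|h|<\dhinterseq}\widetilde g_n(h/\dhinterseq)$, while $B_n$ collects the far lags and is killed by mixing via \Cref{lem:term-Bn}. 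Your direct lag decomposition reaches the same place without the $\Psi_j$ layer, and your split of the disjoint lags into near-adjacent ($\dhinterseq\le|h|<\dhinterseq+\ell_n$, handled via \eqref{neg-small-blocks-new}) and far ($|h|\ge\dhinterseq+\ell_n$, handled via \eqref{eq:davydov-2}) is the paper's \Cref{lem:covariance-disjoint-blocks} in disguise. For \eqref{eq:limiting-cov-H-exc} the paper also expands the double sum directly and reduces to the same lag sums you describe. One small imprecision: for $H,\widetilde H\in\mcb$ the product $H\widetilde H$ need not vanish near $\bszero$, so \eqref{eq:relation-cluster-Y-Q-Theta-epsilon} does not apply directly; the identification $c=\tailmeasurestar(H\widetilde H)$ goes instead through the truncation $H^\epsilon=\ind{K\circ T_\epsilon>1}\in\mca$ exactly as in the proof of \Cref{thm:sliding-block-cov-hk}.
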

\begin{remark}
\begin{itemize}
\item
The second equality in \eqref{eq:limiting-cov-H-exc} follows from \Cref{exer:expression-tailmeasurestar-HH'}.
\item In view of the discussion in \Cref{sec:tep}, \eqref{eq:limiting-cov-H-exc} can be re-phrased as
\begin{align*}%\label{eq:limiting-cov-H-exc-1}
\lim_{n\to\infty}\cov(\mathbb{F}_n(H),\TEPuniv_n(s))= \tailmeasurestar(H\exc_s)=\esp[H(s\bsY)]\;.
\end{align*}
\end{itemize}
\remarkend
\end{remark}
\subsubsection{Bounds for integral representation}
Before we proceed with the proof, we define
$$
g_n(\xi;H)=\esp\left[H(\bsX_{1,\dhinterseq}/\tepseq)H(\bsX_{1+[\dhinterseq \xi],[\dhinterseq \xi]+\dhinterseq}/\tepseq)\right]\;, \ \ \xi>0\;
$$
and
$$
\widetilde g_n(\xi;H)=\frac{g_n(\xi;H)} {\dhinterseq \pr(\norm{\bsX_0}>\tepseq)}\;.
$$
For $\xi=0$, using \Cref{rem:A-LinA-delta} we immediately obtain under \ref{eq:conditiondh}:
\begin{align}\label{eq:gntilde-0}
\lim_{n\to\infty}\sup_{H\in \widetilde\mca\cup\widetilde\mcb}\widetilde{g}_n(0;H)
=\lim_{n\to\infty}\sup_{H\in \widetilde\mca\cup\widetilde\mcb}\tailmeasurestar_{n,\dhinterseq}(H^2)<\infty\;.
\end{align}
Furthermore,  for $j=1,2,3,\ldots$,
$$
\frac{1}{\dhinterseq}
\sum_{i=(j-1)\dhinterseq}^{j\dhinterseq-1}\widetilde{g}_{n}(i/\dhinterseq;H)=\int_{j-1}^j\widetilde{g}_n(\xi;H)\rmd \xi\;.
$$
For $j=1$ we will need the precise behaviour of this integral and we will
handle it using
\Cref{thm:sliding-block-cov,thm:sliding-block-cov-hk}. For $j\geq 2$ the integral vanishes with a given rate.
\begin{lemma}\label{lem:integral-negligible}
Assume that \ref{eq:conditiondh} holds.
\begin{itemize}
\item
If \eqref{eq:mixing-rates-1a} holds then for any finite $M$,
\begin{align*}
\lim_{n\to\infty}\sup_{H\in \widetilde\mca\cup\widetilde\mcb}\int_{1}^M\widetilde{g}_n(\xi;H)\rmd \xi=0
\end{align*}
\item For $j\geq 3$,
\begin{align*}
\sup_{H\in \widetilde\mca\cup\widetilde\mcb}\int_{j-1}^j\widetilde{g}_n(\xi;H)\rmd \xi\leq \constant\
\frac{1}{\dhinterseq \pr(\norm{\bsX_0}>\tepseq)}\beta_{(j-2)\dhinterseq}\;.
\end{align*}
\end{itemize}
\end{lemma}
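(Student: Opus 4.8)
The plan is to read $\int_{j-1}^{j}\widetilde g_n(\xi;H)\rmd\xi$ through the Riemann-sum identity displayed just before the statement, which expresses it as an average of the normalized block-products $\widetilde g_n(h/\dhinterseq;H)$ over the integer shifts $h\in\{(j-1)\dhinterseq,\dots,j\dhinterseq-1\}$; for the shift $h$ the two blocks entering $g_n$ are $\bsX_{1,\dhinterseq}$ and $\bsX_{1+h,h+\dhinterseq}$. Since $\xi\ge1$ these blocks are disjoint, so the whole lemma is a quantitative disjoint-blocks estimate. I would use throughout that $\widetilde\mca\cup\widetilde\mcb$ has a common finite envelope, so that $\sup_{H}\|H\|_\infty<\infty$.

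For the first item, for $\xi\in(1,M]$ the quantity $\widetilde g_n(\xi;H)$ is precisely the one bounded in \Cref{lem:covariance-disjoint-blocks} (take $\widetilde H=H$ and $\xi'=\xi$ there): under \ref{eq:conditiondh} and \eqref{eq:mixing-rates-1a} that lemma gives $\sup_{\xi>1}\sup_{H\in\widetilde\mca\cup\widetilde\mcb}|\widetilde g_n(\xi;H)|\to0$. Since $\{\xi=1\}$ is Lebesgue-null, integrating over $[1,M]$ costs only the finite factor $M-1$, and the first claim follows.

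For the second item, fix $j\ge3$ and $\xi\in[j-1,j]$, so that $h=[\dhinterseq\xi]\ge(j-1)\dhinterseq$. The block $\bsX_{1,\dhinterseq}$ ends at index $\dhinterseq$ while $\bsX_{1+h,h+\dhinterseq}$ starts at index $h+1\ge(j-1)\dhinterseq+1$, hence the two generating $\sigma$-fields are separated by at least $(j-2)\dhinterseq\ge\dhinterseq$ time steps. Replacing $g_n$ by the corresponding covariance (the convention of \Cref{sec:convergence-of-clusters-proofs-cov}) and using the covariance inequality \eqref{eq:davydov-2} together with monotonicity of $\{\beta_k\}$,
\begin{align*}
|\cov(H(\bsX_{1,\dhinterseq}/\tepseq),H(\bsX_{1+h,h+\dhinterseq}/\tepseq))|\le\constant\,\|H\|_\infty^2\,\beta_{(j-2)\dhinterseq}\;,
\end{align*}
uniformly over $\xi\in[j-1,j]$ and over $H$ in the class. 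Dividing by $\dhinterseq\pr(\norm{\bsX_0}>\tepseq)$ and integrating over the unit interval $[j-1,j]$ gives the stated bound.

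I do not expect a genuine obstacle: both parts are short once \Cref{lem:covariance-disjoint-blocks} and \eqref{eq:davydov-2} are available. The one point deserving attention is the interchange of $\esp$ and $\cov$ behind the definition of $g_n$; it is sanctioned by the ``switch freely'' remark at the start of \Cref{sec:convergence-of-clusters-proofs-cov}, and in \Cref{prop:covariance-Gn} the quantity in question arises precisely by expanding a covariance, so the covariance form used here is exactly the one needed.
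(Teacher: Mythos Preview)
Your proposal is correct and follows essentially the same approach as the paper: both parts rely on \Cref{lem:covariance-disjoint-blocks} for the first item and on the covariance inequality~\eqref{eq:davydov-2} with the $(j-2)\dhinterseq$ time-step gap for the second. The only cosmetic difference is that the paper frames the first item as ``pointwise convergence from \Cref{lem:covariance-disjoint-blocks} plus dominated convergence'' (the dominator being the uniform bound $\|H\|_\infty\,\tailmeasurestar_{n,\dhinterseq}(|H|)$ from~\eqref{eq:class-Atilde}), whereas you use the uniform-in-$\xi'>1$ convergence in \Cref{lem:covariance-disjoint-blocks} directly and multiply by $M-1$; since the sup is inside the limit in that lemma, your shortcut is legitimate and slightly cleaner.
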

\begin{proof}
For the first part we apply \Cref{lem:covariance-disjoint-blocks} and the dominated convergence:
$$\sup_{H\in\widetilde\mca\cup\widetilde\mcb}\sup_{\xi\in (1,2)}|\widetilde g_n(\xi;H)|
\leq
\sup_{H\in\widetilde\mca\cup\widetilde\mcb}\|H\|_{\infty}
\tailmeasurestar_{n,\dhinterseq}(|H|)\leq \constant\ \sup_{H\in\widetilde\mca\cup\widetilde\mcb}\tailmeasurestar_{n,\dhinterseq}(|H|)<\infty \;. $$
For the second part, we use \eqref{eq:davydov-2} and the fact that $\widetilde\mca\cup\widetilde\mcb$ has a finite envelope.
\end{proof}
\subsubsection{Representation for covariance between blocks}
Recall that $q_n=n-\dhinterseq+1$. Evaluation of the covariance of the empirical process of sliding blocks will use consecutive disjoint blocks of indices of size $\dhinterseq$:
$$
J_j=\{(j-1)\dhinterseq,\ldots,j\dhinterseq-1\}\;, \ \ j=1,\ldots,m_n=[q_n/\dhinterseq]\;.
$$
Clearly, $\bigcup_{j=1}^{m_n}J_j=\left\{0,\ldots,n-\dhinterseq\right\}$.
We will assume for simplicity that $q_n/\dhinterseq$ is an integer.

Write
\begin{align*}
\frac{1}{q_n \dhinterseq \pr(\norm{\bsX_0}>\tepseq)}\sum_{i=0}^{q_n-1}H\left(\bsX_{i+1,i+\dhinterseq}/\tepseq\right)
=\frac{1}{q_n \dhinterseq \pr(\norm{\bsX_0}>\tepseq)}\sum_{j=1}^{m_n}\Psi_j(H)
\end{align*}
with
\begin{align}\label{eq:block-of-size-rn}
\Psi_j(H)=
\sum_{i\in J_j}H\left(\bsX_{i+1,i+\dhinterseq}/\tepseq\right)\;.
\end{align}
%Grouping in disjoint blocks
Note that the indices of the random vectors
$\bsX_{1},\ldots,\bsX_{2\dhinterseq-1}$ used in the construction of $\Psi_1$ overlap with the indices of
$\bsX_{\dhinterseq+1},\ldots,\bsX_{3\dhinterseq-1}$ used to define $\Psi_2$, but do not overlap with the indices used in the definition of $\Psi_3$. Likewise, the indices used in the definition of $\Psi_2$ overlap with those in $\Psi_3$, but not with any other term $\Psi_j$, $j\geq 4$. This partially explains where does a contribution to the limiting variance come from: from the dependence within each block $J_j$ and cross dependence between $J_j$ and two neighbouring blocks.

For $j\geq 1$ we have
\begin{align*}
&\frac{\esp[\Psi_1(H)\Psi_{j+1}(H)]}{\dhinterseq^3 \pr(\norm{\bsX_0}>\tepseq)}=
\frac{1}{\dhinterseq^3 \pr(\norm{\bsX_0}>\tepseq)}
\esp\left[\sum_{h=0}^{\dhinterseq-1}H\left(\bsX_{h+1,h+\dhinterseq}/\tepseq\right)
\sum_{i=j\dhinterseq}^{(j+1)\dhinterseq-1}H\left(\bsX_{i+1,i+\dhinterseq}/\tepseq\right)\right]\\
&=\frac{1}{\dhinterseq}
\sum_{i=(j-1)\dhinterseq}^{j\dhinterseq}\left(\frac{i}{\dhinterseq}-(j-1)\right)\widetilde{g}_{n}(i/\dhinterseq;H)
+\frac{1}{\dhinterseq}
\sum_{i=j\dhinterseq+1}^{(j+1)\dhinterseq}\left((j+1)-\frac{i}{\dhinterseq}\right)\widetilde{g}_{n}(i/\dhinterseq;H)
\end{align*}
and hence
\begin{align}
&\frac{\esp[|\Psi_1(H)\Psi_{j+1}(H)|]}{\dhinterseq^3 \pr(\norm{\bsX_0}>\tepseq)}
\leq \frac{1}{\dhinterseq}
\sum_{i=(j-1)\dhinterseq}^{(j+1)\dhinterseq}|\widetilde{g}_{n}(i/\dhinterseq;H)|
\leq
\int_{j-1}^{j+1}|\widetilde{g}_n(\xi;H)|\rmd \xi\;. \label{eq:covariance-bound}
\end{align}

\subsubsection{Proof of \Cref{prop:covariance-Gn}, Eq. \eqref{eq:limiting-cov-H}}
\begin{proof}
Note that (since $q_n\sim n$)
\begin{align}\label{eq:ratio}
\frac{\statinterseqn m_n}{\left(q_n \dhinterseq \pr(\norm{\bsX_0}>\tepseq)\right)^2}=
\frac{q_nn\pr(\norm{\bsX_0}>\tepseq)}{\dhinterseq \left(q_n \dhinterseq \pr(\norm{\bsX_0}>\tepseq)\right)^2}
\sim\frac{1}{\dhinterseq^3 \pr(\norm{\bsX_0}>\tepseq)}\;.
\end{align}
Write $\var(\mathbb{F}_n(H))$ as
\begin{align}\label{eq:ourdecomp}
\frac{\statinterseqn m_n}{\left(q_n \dhinterseq \pr(\norm{\bsX_0}>\tepseq)\right)^2}
\cov(\Psi_2(H),\Psi_1(H)+\Psi_2(H)+\Psi_3(H))+ A_n(H)
\end{align}
with the reminder $A_n(H)$ given by
\begin{align}
A_n(H):=&-2 \frac{\statinterseqn}{\left(q_n \dhinterseq \pr(\norm{\bsX_0}>\tepseq)\right)^2}
\cov(\Psi_1(H),\Psi_2(H))\label{eq:ourdecomp1} \\
&+2 \frac{1+o(1)}{\dhinterseq^3 \pr(\norm{\bsX_0}>\tepseq)}  \sum_{j=2}^{m_{n}-1}\left(1-\frac{j}{m_{n}}\right)\left\{\cov\left(\Psi_1(H), \Psi_{1+j}(H)\right)\right\}\label{eq:ourdecomp2} \\
&=:A_{n,1}(H)+(1+o(1))B_n(H)\;. \notag
\end{align}
If we show that the leading term on the \rhs\ of \eqref{eq:ourdecomp} converges to a finite limit,
then automatically $\lim_{n\to\infty}A_{n,1}(H)=0$ (since $m_n\to\infty)$. Thus, the reminder $A_n(H)$ will be negligible if we show that
\begin{align}\label{eq:reminder-1}
\lim_{n\to\infty}B_n(H)=0\;.
\end{align}
We will start by analysing the first term in \eqref{eq:ourdecomp}.
Set
$$
R_n(H)=\frac{1}{\dhinterseq}\sum_{i=\dhinterseq}^{2\dhinterseq-1}(2-i/\dhinterseq)\widetilde{g}_{n}(i/\dhinterseq;H)\;.
$$
Since \eqref{eq:mixing-rates-1a} holds, the application of the first part of \Cref{lem:integral-negligible} gives
\begin{align}\label{eq:reminder-2-mcg}
\lim_{n\to\infty}\sup_{H\in \widetilde\mca\cup\widetilde\mcb}R_n(H)=0\;.
\end{align}
Write the first term in \eqref{eq:ourdecomp} as (cf. \eqref{eq:ratio})
\begin{align}\label{eq:variance-decomposition-leading-term}
&
\frac{1+o(1)}{\dhinterseq^3 \pr(\norm{\bsX_0}>\tepseq)}
\textcolor{black}{\left\{
\dhinterseq g_{n}(0;H)+2\dhinterseq \sum_{i=1}^{\dhinterseq-1}g_{n}(i/\dhinterseq;H)+2\dhinterseq R_n\right\}}\notag\\
&=(1+o(1))\left\{\frac{1}{\dhinterseq}\widetilde{g}_{n}(0;H)+
2\frac{1}{\dhinterseq}
\sum_{i=1}^{\dhinterseq-1}\widetilde{g}_{n}(i/\dhinterseq;H)+
2R_n(H)\right\}\;.
\end{align}
Then, using \eqref{eq:gntilde-0}, \eqref{eq:reminder-1} and \eqref{eq:reminder-2-mcg}, we have
\begin{align*}
\lim_{n\to\infty}\var(\mathbb{F}_n(H))
&=2\lim_{n\to\infty}\int_0^1
\widetilde g_n(\xi;H)\rmd \xi\;,
\end{align*}

Applying \Cref{thm:sliding-block-cov,thm:sliding-block-cov-hk} (the case $h=[\xi \dhinterseq]$), we have
\begin{align*}
\lim_{n\to\infty}\var(\mathbb{F}_n(H))= 2 \tailmeasurestar(H^2)\int_0^1 (1-\xi)\rmd \xi= \tailmeasurestar(H^2)\;.
\end{align*}
To conclude the proof, we show \eqref{eq:reminder-1} in the following lemma.
\end{proof}
\begin{lemma}\label{lem:term-Bn}
Assume that \eqref{eq:mixing-rates-1a}-\eqref{eq:mixing-rates-7}
hold. Then
\begin{align}\label{eq:reminder-1-mcg}
\lim_{n\to\infty}\sup_{H\in \widetilde\mca\cup\widetilde\mcb}B_n(H)=0\;.
\end{align}
\end{lemma}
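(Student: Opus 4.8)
The plan is to bound $B_n(H)$, after dropping the factor $|1-j/m_n|\le 1$, by
$\frac{2}{\dhinterseq^3\pr(\norm{\bsX_0}>\tepseq)}\sum_{j=2}^{m_n-1}|\cov(\Psi_1(H),\Psi_{1+j}(H))|$, and to split the sum into the single boundary term $j=2$ and the tail $3\le j\le m_n-1$. The structural point is that $\Psi_1(H)$ is a measurable function of $(\bsX_1,\dots,\bsX_{2\dhinterseq-1})$ while $\Psi_{1+j}(H)$ is a function of $(\bsX_{j\dhinterseq+1},\dots,\bsX_{(j+2)\dhinterseq-1})$; hence the two are built from index blocks separated by a gap of at least $(j-2)\dhinterseq$, so $\beta$-mixing is informative as soon as $j\ge 3$ but useless for $j=2$, where the gap is bounded.

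For $j=2$ I would decompose $\cov(\Psi_1(H),\Psi_3(H))=\esp[\Psi_1(H)\Psi_3(H)]-\esp[\Psi_1(H)]\esp[\Psi_3(H)]$. By \eqref{eq:covariance-bound} with $j=2$,
\begin{align*}
\frac{\esp[|\Psi_1(H)\Psi_3(H)|]}{\dhinterseq^3\pr(\norm{\bsX_0}>\tepseq)}\le\int_1^3|\widetilde g_n(\xi;H)|\,\rmd\xi\;,
\end{align*}
which tends to $0$ uniformly over $H\in\widetilde\mca\cup\widetilde\mcb$ by the argument behind the first part of \Cref{lem:integral-negligible} (which rests on \Cref{lem:covariance-disjoint-blocks} and \eqref{eq:mixing-rates-1a}). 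For the product of means, stationarity and the definition of $\tailmeasurestar_{n,\dhinterseq}$ give $\esp[\Psi_j(H)]=\dhinterseq^2\pr(\norm{\bsX_0}>\tepseq)\,\tailmeasurestar_{n,\dhinterseq}(H)$, so
\begin{align*}
\frac{|\esp[\Psi_1(H)]\esp[\Psi_3(H)]|}{\dhinterseq^3\pr(\norm{\bsX_0}>\tepseq)}=\dhinterseq\pr(\norm{\bsX_0}>\tepseq)\,|\tailmeasurestar_{n,\dhinterseq}(H)|^2\;;
\end{align*}
since $|\tailmeasurestar_{n,\dhinterseq}(H)|\le\tailmeasurestar_{n,\dhinterseq}(|H|)$ is bounded uniformly in $H$ by \eqref{eq:class-Atilde} and $\dhinterseq\pr(\norm{\bsX_0}>\tepseq)\to 0$ by \ref{eq:rnbarFun0}, this also vanishes uniformly. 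Hence the $j=2$ contribution to $B_n(H)$ tends to $0$ uniformly over $\widetilde\mca\cup\widetilde\mcb$.

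For $3\le j\le m_n-1$ I would use that $\widetilde\mca\cup\widetilde\mcb$ has a finite envelope, say $\|H\|_\infty\le\constant$ on the class, so that $\|\Psi_1(H)\|_\infty\vee\|\Psi_{1+j}(H)\|_\infty\le\constant\,\dhinterseq$; as the two index blocks are separated by at least $(j-2)\dhinterseq$, the covariance inequality \eqref{eq:davydov-2} gives $|\cov(\Psi_1(H),\Psi_{1+j}(H))|\le\constant\,\dhinterseq^2\,\beta_{(j-2)\dhinterseq}$. Summing,
\begin{align*}
\frac{2}{\dhinterseq^3\pr(\norm{\bsX_0}>\tepseq)}\sum_{j=3}^{m_n-1}|\cov(\Psi_1(H),\Psi_{1+j}(H))|\le\frac{\constant}{\dhinterseq\pr(\norm{\bsX_0}>\tepseq)}\sum_{k=1}^{\infty}\beta_{k\dhinterseq}\;,
\end{align*}
which tends to $0$ by \eqref{eq:mixing-rates-7}, uniformly over $H$ since the right-hand side is $H$-free. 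Combining the two ranges gives \eqref{eq:reminder-1-mcg}.

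The main obstacle — and the reason one cannot simply bound each covariance by $\esp[|\Psi_1(H)\Psi_{1+j}(H)|]+|\esp[\Psi_1(H)]|\,|\esp[\Psi_{1+j}(H)]|$ — is that the product-of-means piece is of order $\dhinterseq\pr(\norm{\bsX_0}>\tepseq)$ after normalization, and summing it over the $\asymp n/\dhinterseq$ values of $j$ produces a term of exact order $\statinterseqn\to\infty$. One must therefore exploit the cancellation inherent in the covariance and apply the mixing inequality to $\cov(\Psi_1(H),\Psi_{1+j}(H))$ directly; the sole price is that $j=2$, where the blocks are too close for mixing to help, has to be treated separately through the integral-representation bound \eqref{eq:covariance-bound}.
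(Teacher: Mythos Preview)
Your proof is correct and follows essentially the same route as the paper: split off the boundary term $j=2$ and handle it via the integral-representation bound \eqref{eq:covariance-bound} together with \Cref{lem:integral-negligible}, then dispose of $j\ge 3$ by the $\beta$-mixing covariance inequality and \eqref{eq:mixing-rates-7}. The one difference is presentational: the paper, having announced at the start of Section~\ref{sec:convergence-of-clusters-proofs-cov} that under \ref{eq:rnbarFun0} it will ``switch freely between $\esp$ and $\cov$'', simply bounds $|B_n(H)|$ by sums of $\esp[|\Psi_1\Psi_{1+j}|]$ and applies both parts of \Cref{lem:integral-negligible}; you instead keep the covariance throughout, treat the product-of-means piece explicitly for $j=2$, and apply \eqref{eq:davydov-2} directly to $\cov(\Psi_1,\Psi_{1+j})$ for $j\ge 3$. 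Your closing paragraph correctly identifies why this extra care matters in principle --- the product-of-means contributions are not summable --- though in the paper's framework that issue is absorbed into the stated convention rather than spelled out.
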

\begin{proof}[Proof of \Cref{lem:term-Bn}]
Using \eqref{eq:covariance-bound} we have
\begin{align*}
&|B_n(H)|
\leq \frac{\esp[|\Psi_1(H)\Psi_3(H)|]}{\dhinterseq^3 \pr(\norm{\bsX_0}>\tepseq)}
+\frac{1}{\dhinterseq^3 \pr(\norm{\bsX_0}>\tepseq)}
\sum_{j=3}^{m_{n}-1}|\esp[|\Psi_1(H)\Psi_{1+j}(H)|]\\
&\leq\int_{1}^{3}|\widetilde{g}_n(\xi;H)|\rmd \xi+
\sum_{j=3}^{m_{n}-1}\int_{j-1}^{j+1}|\widetilde{g}_n(\xi;H)|\rmd \xi\;.
\end{align*}
The first term is $o(1)$ uniformly over the class of functions (cf. the first part of
\Cref{lem:integral-negligible}). Using the second part of \Cref{lem:integral-negligible} we bound
$$
\sum_{j=3}^{m_{n}-1}\int_{j-1}^{j+1}|\widetilde{g}_n(\xi;H)|\rmd \xi\leq\constant \frac{1}{\dhinterseq \pr(\norm{\bsX_0}>\tepseq)}
\sum_{j=1}^\infty\beta_{j\dhinterseq}\;.
$$
We finish the proof by applying the mixing assumption \eqref{eq:mixing-rates-7}.
\end{proof}
\subsubsection{Proof of \Cref{prop:covariance-Gn}, Eq. \eqref{eq:limiting-cov-H-exc}}
\begin{proof}
We write (recall that $q_n\sim n$)
\begin{align*}
&\statinterseqn\cov\left(\frac{1}{q_n \dhinterseq \pr(\norm{\bsX_0}>\tepseq)}\sum_{i=0}^{q_n-1}H(\bsX_{i+1,i+\dhinterseq}/\tepseq),\frac{1}{q_n\pr(\norm{\bsX_0}>\tepseq)}\sum_{j=1}^{q_n}\ind{\norm{\bsX_j}>\tepseq}\right)\\
&\sim \frac{\statinterseqn}{n^2\dhinterseq \pr^2(\norm{\bsX_0}>\tepseq)}
\sum_{i=0}^{q_n-1}\sum_{j=1}^{q_n}\cov\left(H(\bsX_{i+1,i+\dhinterseq}/\tepseq),\ind{\norm{\bsX_j}>\tepseq}\right)\\
&=\frac{1}{n\dhinterseq \pr(\norm{\bsX_0}>\tepseq)}
\sum_{i=0}^{q_n-1}\sum_{j=1}^{q_n}\cov\left(H(\bsX_{i-j+1,i-j+\dhinterseq}/\tepseq),\ind{\norm{\bsX_0}>\tepseq}\right)\;.
\end{align*}

Split the inner sum into two pieces, $\sum_{j=1}^{i}$ and $\sum_{j=i+1}^{q_n}$, in the first one replace $j$ with $h=i-j$, in the second one replace $j$ with $h=j-i$ to get
\begin{align*}
&\frac{1}{q_n\dhinterseq \pr(\norm{\bsX_0}>\tepseq)}
\sum_{i=0}^{q_n-1}\sum_{j=1}^{i}\cov\left(H(\bsX_{i-j+1,i-j+\dhinterseq}/\tepseq),\ind{\norm{\bsX_0}>\tepseq}\right)\\
&\phantom{=}+\frac{1}{q_n\dhinterseq \pr(\norm{\bsX_0}>\tepseq)}
\sum_{i=0}^{q_n-1}\sum_{j=i+1}^{q_n}\cov\left(H(\bsX_{i-j+1,i-j+\dhinterseq}/\tepseq),\ind{\norm{\bsX_0}>\tepseq}\right)\\
&=\frac{1}{q_n\dhinterseq \pr(\norm{\bsX_0}>\tepseq)}
\sum_{i=1}^{q_n-1}\sum_{h=0}^{i-1}\cov\left(H(\bsX_{h+1,h+\dhinterseq}/\tepseq),\ind{\norm{\bsX_0}>\tepseq}\right)\\
&\phantom{=}+\frac{1}{q_n\dhinterseq \pr(\norm{\bsX_0}>\tepseq)}
\sum_{i=0}^{q_n-1}\sum_{h=1}^{q_n-i}\cov\left(H(\bsX_{-h+1,-h+\dhinterseq}/\tepseq),\ind{\norm{\bsX_0}>\tepseq}\right)\;.
\end{align*}
This gives further
\begin{align}
&\frac{1}{q_n\dhinterseq \pr(\norm{\bsX_0}>\tepseq)}
\sum_{h=0}^{q_n-2}\sum_{i=h+1}^{q_n-1}\cov\left(H(\bsX_{h+1,h+\dhinterseq}/\tepseq),\ind{\norm{\bsX_0}>\tepseq}\right)\notag\\
&\phantom{=}+\frac{1}{q_n\dhinterseq \pr(\norm{\bsX_0}>\tepseq)}
\sum_{h=1}^{q_n-2}\sum_{i=1}^{q_n-h}\cov\left(H(\bsX_{-h+1,-h+\dhinterseq}/\tepseq),\ind{\norm{\bsX_0}>\tepseq}\right)\notag\\
&=\frac{1}{\dhinterseq \pr(\norm{\bsX_0}>\tepseq)}
\sum_{h=0}^{q_n-1}(1-h/q_n)\cov\left(H(\bsX_{h+1,h+\dhinterseq}/\tepseq),\ind{\norm{\bsX_0}>\tepseq}\right)\label{eq:second-term-1}\\
&\phantom{=}+\frac{1}{\dhinterseq \pr(\norm{\bsX_0}>\tepseq)}
\sum_{h=0}^{q_n-1}(1-h/q_n)\cov\left(H(\bsX_{-h+1,-h+\dhinterseq}/\tepseq),\ind{\norm{\bsX_0}>\tepseq}\right)\;.\label{eq:second-term}
\end{align}
We show that the term in \eqref{eq:second-term-1} is negligible, while the one in \eqref{eq:second-term} yields the limit.
We split the term in \eqref{eq:second-term-1} into two pieces, according to $h\leq \dhinterseq$ and $h>\dhinterseq$. Then  the first part is bounded by
\begin{align*}
\frac{1}{\dhinterseq}\sum_{h=0}^{\dhinterseq}\esp\left[|H(\bsX_{h+1,h+\dhinterseq}/\tepseq)| \left|\right. \norm{\bsX_0}>\tepseq\right]=\int_0^1 g_n(v)\rmd v
\end{align*}
with
\begin{align*}
g_n(v)=\esp\left[|H(\bsX_{[\dhinterseq v]+1,[\dhinterseq v]+\dhinterseq}/\tepseq)| | \norm{\bsX_0}>\tepseq\right]\;.
\end{align*}
Under \ref{eq:conditiondh}, $g_n(v)\to 0$ (cf. the first part of \Cref{lem:cond-conv-H} with $H\equiv 1$ and $\widetilde{H}=|H|$).
Likewise, since $\dhinterseq/q_n\to 0$,
\begin{align*}
\frac{1}{\dhinterseq q_n}\sum_{h=0}^{\dhinterseq}h\esp\left[|H(\bsX_{h+1,h+\dhinterseq}/\tepseq)| | \norm{\bsX_0}>\tepseq\right]=q_n^{-1}\int_0^1 [\dhinterseq v]g_n(v)\rmd v \to 0\;.
\end{align*}
Furthermore, applying \eqref{eq:davydov-2},
\begin{align*}
&\frac{1}{\dhinterseq \pr(\norm{\bsX_0}>\tepseq)}
\sum_{h=\dhinterseq+1}^{q_n-1}(1-h/q_n)|\cov\left(H(\bsX_{h+1,h+\dhinterseq}/\tepseq),\ind{\norm{\bsX_0}>\tepseq}\right)|\\
&\leq
\frac{\|H\|_{\infty}}{\dhinterseq \pr(\norm{\bsX_0}>\tepseq)}
\sum_{h=1}^{n}\beta_{h+\dhinterseq}
\end{align*}
and the latter term vanishes by \eqref{eq:mixing-rates-1a}. In summary, \eqref{eq:second-term-1} is negligible.

For the term in \eqref{eq:second-term} we write (recall that we can replace $\cov$ with $\esp$ thanks to \ref{eq:rnbarFun0})
\begin{align*}
\frac{1}{\dhinterseq}\sum_{h=0}^{\dhinterseq}\esp\left[H(\bsX_{-h+1,-h+\dhinterseq}/\tepseq)\mid \norm{\bsX_0}>\tepseq\right]=\int_0^1 g_n(v)\rmd v
\end{align*}
with
\begin{align*}
g_n(v)=\esp\left[H(\bsX_{-[\dhinterseq v]+1,-[\dhinterseq v]+\dhinterseq}/\tepseq)\mid \norm{\bsX_0}>\tepseq\right]\;.
\end{align*}
By \Cref{lem:tailprocesstozero}, $g_n(v)\to \esp[H(\bsY)]$ for each $v$.
\end{proof}
\subsection{Proof of \Cref{thm:sliding-blocks-process-clt} - fidi convergence}\label{sec:fidi}
Recall that $q_n=n-\dhinterseq+1$ and recall the disjoint blocks of size $\dhinterseq$:
$$
J_j=\{(j-1)\dhinterseq,\ldots,j\dhinterseq-1\}\;, \ \ j=1,\ldots,m_n=[q_n/\dhinterseq]\;.
$$
These blocks were chosen to calculate the limiting covariance of the process $\mathbb{F}_n$.
However, they are not appropriate for a proof of the central limit theorem. We need to introduce a large-small blocks decomposition.
For this purpose let $z_n$ be a sequence of integers such that $z_n\to\infty$
and
\begin{align}\label{eq:zn}
\lim_{n\to\infty}\frac{z_n}{\sqrt{n\pr(\norm{\bsX_0}>\tepseq)}}=0\;.
\end{align}
Set
$$\widetilde{m}_n=\left[\frac{q_n}{(z_n+2)\dhinterseq}\right]$$
and assume for simplicity that $\widetilde{m}_n$ is an integer. Since $z_n\to\infty$, we have $\widetilde{m}_n=o(m_n)$.
For $j=1,\ldots,\widetilde{m}_n$ define now large and small blocks as follows:
\begin{align*}
&L_1=\{0,\ldots, z_n\dhinterseq-1\}\;,\ \  S_1=\{z_n\dhinterseq\ldots,z_n\dhinterseq+2\dhinterseq-1\}\;, \\
&L_2=\{z_n\dhinterseq+2\dhinterseq,\ldots, 2z_n\dhinterseq+2\dhinterseq-1\}\;,\ \  S_2=\{2z_n\dhinterseq+2\dhinterseq\ldots,2z_n\dhinterseq+4\dhinterseq-1\}\;, \\
&L_j=\{(j-1)z_n\dhinterseq+2(j-1)\dhinterseq,\ldots, jz_n\dhinterseq+2(j-1)\dhinterseq-1\}\;,\\ &S_j=\{jz_n\dhinterseq+2(j-1)\dhinterseq,\ldots,jz_n\dhinterseq+2j\dhinterseq-1\}\;.
\end{align*}
The block $L_1$ is obtained by merging $z_n$ consecutive blocks $J_1,\ldots,J_{z_n}$ of size $\dhinterseq$. Likewise, $S_1=J_{z_n+1}\cup J_{z_n+2}$.
Therefore, the large block of size $z_n\dhinterseq$ is followed by the small block of size $2\dhinterseq$, which in turn is followed by the large block of size $z_n\dhinterseq$ and so on. All together,
\begin{align*}
\bigcup_{j=1}^{\widetilde{m}_n}\left(L_j\cup S_j\right)=\{0,\ldots,n-\dhinterseq\}\;.
\end{align*}
Write
\begin{align}\label{eq:large-small-block-process}
&\sum_{i=0}^{q_n-1}H\left(\bsX_{i+1,i+\dhinterseq}/\tepseq\right)
=\sum_{j=1}^{\widetilde{m}_n}\Psi_j^{(l)}(H)
+\sum_{j=1}^{\widetilde{m}_n}\Psi_j^{(s)}(H)\;,
\end{align}
where now
\begin{align*}
\Psi_j^{(l)}(H)=
\sum_{i\in L_j}H\left(\bsX_{i+1,i+\dhinterseq}/\tepseq\right)\;, \ \
\Psi_j^{(s)}(H)=
\sum_{i\in S_j}H\left(\bsX_{i+1,i+\dhinterseq}/\tepseq\right)
\;.
\end{align*}
With such the decomposition, $\bsX_1,\ldots,\bsX_{z_n\dhinterseq+\dhinterseq-1}$ used in the definition of $\Psi_1^{(l)}(H)$ are separated
by $\dhinterseq+2$ from the random variables that define $\Psi_2^{(l)}(H)$. The mixing condition \eqref{eq:mixing-rates-0} allows us to replace $\bsX$ with the independent blocks process, that is, we can treat the random variables $\Psi_j^{(l)}(H)$, $j=1,\ldots,\widetilde{m}_n$, as independent. The same applies to $\Psi_j^{(s)}(H)$.

Set
\begin{align}\label{eq:Zn-process}
{\mathbb{Z}}_n(H)=\sum_{j=1}^{\widetilde{m}_n}\left\{Z_{n,j}(H)-\esp[Z_{n,j}(H)]\right\}=:
\sum_{j=1}^{\widetilde{m}_n}\bar Z_{n,j}(H)
\end{align}
with
\begin{align}\label{eq:Zn-process-summands}
Z_{n,j}(H)=\frac{\sqrt{\statinterseqn}}{q_n \dhinterseq \pr(\norm{\bsX_0}>\tepseq)}\Psi_j^{(l)}(H)\;.
\end{align}
The next steps are standard.
\begin{itemize}
\item
First, we show that the limiting variance of the large blocks process ${\mathbb{Z}_n}$ is the same as that of the process $\mathbb{F}_n$;
\item Next, we show that the small blocks process (the second term in
\eqref{eq:large-small-block-process}) is negligible;
\item Finally, we will verify the Lindeberg condition for the large blocks process.
\end{itemize}

\noindent {\bf Variance of the large blocks.}
We have (using the assumed independence of $\Psi_j^{(l)}(H)$)
\begin{align}\label{eq:variance-large-blocks}
&\statinterseqn\var\left(\frac{1}{q_n\dhinterseq \pr(\norm{\bsX_0}>\tepseq)}\sum_{j=1}^{\widetilde{m}_n}\Psi_j^{(l)}(H)\right)
=\frac{\statinterseqn \widetilde{m}_n}{(q_n\dhinterseq \pr(\norm{\bsX_0}>\tepseq))^2}\var(\Psi_1^{(l)}(H))\notag\\
&\sim\frac{1}{ z_n\dhinterseq^3 \pr(\norm{\bsX_0}>\tepseq)}\var\left(\sum_{i=0}^{z_n\dhinterseq-1}H\left(\bsX_{i+1,i+\dhinterseq}/\tepseq\right)\right)
\notag\\
&= \frac{1}{ z_n\dhinterseq^3 \pr(\norm{\bsX_0}>\tepseq)}\var\left(\sum_{j=1}^{z_n}\Psi_j(H)\right)
\;,
\end{align}
where in the last line we decomposed the block $L_1=\{0,\ldots,z_n\dhinterseq-1\}$ into $z_n$ disjoint blocks $J_1,\ldots,J_{z_n}$, used the notation \eqref{eq:block-of-size-rn}, the asymptotics \eqref{eq:ratio} and $\widetilde{m}_n\sim m_n/z_n$.

The next steps are a repetition of the proof of \Cref{prop:covariance-Gn}, with the appropriate adjustments. The term in \eqref{eq:variance-large-blocks} becomes
\begin{align*}
\frac{\var\left(\Psi_1(H)\right)}{\dhinterseq^3\pr(\norm{\bsX_0}>\tepseq)}
+2\frac{1}{\dhinterseq^3 \pr(\norm{\bsX_0}>\tepseq)}\sum_{j=1}^{z_n-1}\left(1-\frac{j}{z_n}\right)\cov(\Psi_1(H),\Psi_{1+j}(H))
\end{align*}
and as in \eqref{eq:ourdecomp} we can write it as
\begin{align}\label{eq:variance-of-the-process}
\frac{1}{\dhinterseq^3\pr(\norm{\bsX_0}>\tepseq)}\left\{\cov(\Psi_2(H),\Psi_1(H)+\Psi_2(H)+\Psi_3(H))\right\}+
\widetilde{A}_n(H)
\end{align}
with the reminder $\widetilde{A}_n(H)$ given this time by (cf. \eqref{eq:ourdecomp1}-\eqref{eq:ourdecomp2})
\begin{align}
&\widetilde{A}_n(H):=-2\frac{1}{z_n}\frac{1}{\dhinterseq^3 \pr(\norm{\bsX_0}>\tepseq)}
\cov(\Psi_1(H),\Psi_2(H))\notag \\
&+\frac{2}{\dhinterseq^3 \pr(\norm{\bsX_0}>\tepseq)} \sum_{j=2}^{z_{n}-1}\left(1-\frac{j}{z_{n}}\right)\left\{\cov\left(\Psi_1(H), \Psi_{1+j}(H)\right)\right\}=:\widetilde{A}_{n,1}(H)+\widetilde{B}_n(H)\;.
\label{eq:reminder-big-blocks}
\end{align}
The reminder is negligible by the same argument as before. Indeed, we note that
$\widetilde{B}_n(H)$ is just $B_n(H)$ from \eqref{eq:ourdecomp2} with $m_n$ replaced with $z_n$. The dependence on $m_n$ vanishes in the final stage of the proof of
\Cref{lem:term-Bn}.
The leading term in \eqref{eq:variance-of-the-process} is the same as in the proof
of \Cref{prop:covariance-Gn}; cf. \eqref{eq:ourdecomp}.

In summary, the variance of the large block process is
\begin{align*}
\lim_{n\to\infty}\var\left({\mathbb{Z}}_n(H)\right)=\tailmeasurestar(H^2)\;.
\end{align*}

\noindent {\bf Variance of the small blocks.} We have (using again the assumed independence of $\Psi_j^{(s)}(H)$ thanks to the beta-mixing)
\begin{align*}
\statinterseqn\var\left(\frac{1}{q_n\dhinterseq \pr(\norm{\bsX_0}>\tepseq)}\sum_{j=1}^{\widetilde{m}_n}\Psi_j^{(s)}(H)\right)
\sim \frac{1}{ z_n\dhinterseq^3 \pr(\norm{\bsX_0}>\tepseq)}\var(\Psi_1^{(s)}(H))\;.
\end{align*}
Since $\Psi_1^{(s)}(H)$ is just $\Psi_1(H)$ defined in \eqref{eq:block-of-size-rn}, we have
\begin{align*}
\statinterseqn\var\left(\frac{1}{q_n\dhinterseq \pr(\norm{\bsX_0}>\tepseq)}\sum_{j=1}^{\widetilde{m}_n}\Psi_j^{(s)}(H)\right)
=O(1/z_n)=o(1)\;.
\end{align*}
\noindent {\bf Lindeberg condition for ${\mathbb Z}_n(H)$.}
We need to show that for all $\eta>0$,
\begin{align}\label{eq:lindeberg}
\lim_{n\to\infty}\widetilde{m}_n\esp\left[Z_{n,1}^2(H)\ind{|Z_{n,1}(H)|> \eta}\right]=0\;.
\end{align}
Since $H$ is bounded, then by \eqref{eq:zn},
\begin{align*}
|Z_{n,1}(H)|\leq \frac{\sqrt{\statinterseqn}z_n\dhinterseq}{q_n \dhinterseq \pr(\norm{\bsX_0}>\tepseq)}\|H\|_{\infty}\sim
\frac{z_n}{\sqrt{n\pr(\norm{\bsX_0}>\tepseq)}}\|H\|_{\infty}=o(1)\;.
\end{align*}
Thus, the indicator in \eqref{eq:lindeberg} becomes zero for large $n$.

\noindent {\bf Lindeberg condition for ${\mathbb Z}_n(\exc)$.}
The functional $\exc$ is not bounded and we will prove the Lindeberg condition under \ref{eq:conditionS}. Write
\begin{align*}
\widetilde{w}_n=\frac{\sqrt{\statinterseqn}}{q_n \dhinterseq \pr(\norm{\bsX_0}>\tepseq)}
\end{align*}
so that
\begin{align*}
&Z_{n,1}(\exc)=\widetilde{w}_n \sum_{i=0}^{z_n\dhinterseq-1}\exc\left(\bsX_{i+1,i+\dhinterseq}/\tepseq\right)
=\widetilde{w}_n \sum_{i=0}^{z_n\dhinterseq-1}\sum_{j=i+1}^{i+\dhinterseq}\ind{\norm{\bsX_j}>\tepseq}\\
%&=\widetilde{w}_n \sum_{j=1}^{\dhinterseq(z_n+1)}\sum_{i=(j-\dhinterseq)\vee 0}^{j-1}\ind{\norm{\bsX_j}>\tepseq}=\widetilde{w}_n \left\{\sum_{j=1}^{\dhinterseq}+\sum_{j=\dhinterseq+1}^{\dhinterseq(z_n+1)}\right\}\sum_{i=(j-\dhinterseq)\vee 0}^{j-1}\ind{\norm{\bsX_j}>\tepseq}\\
&=\widetilde{w}_n \left\{\sum_{j=1}^{\dhinterseq}\sum_{i=0}^{j-1}+\sum_{j=\dhinterseq+1}^{\dhinterseq(z_n+1)}
\sum_{i=j-\dhinterseq}^{j-1}\right\}\ind{\norm{\bsX_j}>\tepseq}\leq \widetilde{w}_n\dhinterseq
\sum_{j=1}^{\dhinterseq(z_n+1)}\ind{\norm{\bsX_j}>\tepseq}\\
&\leq
\frac{\sqrt{\statinterseqn}}{q_n \dhinterseq \pr(\norm{\bsX_0}>\tepseq)}\dhinterseq
\sum_{j=1}^{2\dhinterseq}\ind{\norm{\bsX_j}>\tepseq}
=\frac{1+o(1)}{\sqrt{n\pr(\norm{\bsX_0}>\tepseq)}}\sum_{j=1}^{2\dhinterseq}\ind{\norm{\bsX_j}>\tepseq}\;.
\end{align*}
The last term can be recognized as one (scaled) block of size $2\dhinterseq$ of the tail empirical process
$\TEPunivsl_n(s)$. \cite[Lemma 3.6]{kulik:soulier:wintenberger:2018} (see also \cite[Lemma 9.2.8]{kulik:soulier:2020}) gives
\begin{align*}
    \lim_{n\to\infty}m_n\esp\left[Z_{n,1}^2(\exc)\ind{|Z_{n,1}(\exc)|> \eta}\right]=0 \; .
  \end{align*}
 If moreover \ref{eq:rnbarFun0} holds then
  \begin{align*}
    \lim_{n\to\infty}m_n\esp\left[\bar Z_{n,1}^2(\exc)\ind{|\bar Z_{n,1}(\exc)|> \eta}\right]=0 \; .
  \end{align*}
  Since $\widetilde{m}_n=o(m_n)$, we obtain the Lindeberg condition for ${\mathbb{Z}_n}(\exc)$.
\subsection{Proof of \Cref{thm:sliding-blocks-process-clt} - asymptotic equicontinuity}\label{sec:tightness}
We need the following lemma which is an adapted version of Theorem~2.11.1 in \cite{vandervaart:wellner:1996}.
Let $\mathbb{Z}_n$ be the empirical process indexed by a \semimetric\ space $(\mcg,\metricmcg)$, defined by
\begin{align*}
  %\label{eq:process-bbZ}
  \bbZ_n(f) = \sum_{j=1}^{\widetilde{m}_n}\left\{Z_{n,j}(f)-\esp[Z_{n,j}(f)]\right\} \; ,
\end{align*}
where $\{Z_{n,j},n\geq 1\}$, $j=1,\ldots,\widetilde{m}_n$, are \iid\ separable, stochastic processes and $\widetilde{m}_n$
is a sequence of integers such that $\widetilde{m}_n\to\infty$.
Define the random \semimetric\ $d_n$ on $\mcg$ by
\begin{align*}
  %\label{eq:random-semimetric}
  d_n^2(f,g) = \sum_{j=1}^{\widetilde{m}_n} \{Z_{n,j}(f)-Z_{n,j}(g)\}^2 \; , f,g\in \mcg \; .
\end{align*}
\begin{lemma}%[Adapted from {\cite[Theorem~2.11.1]{vandervaart:wellner:1996}}]
  \label{theo:VW2.11.1}
  %Consider the sequence of processes
%  \begin{align*}
%  \bbZ_n(f) = \sum_{j=1}^{m_n}\left\{Z_{n,j}(f)-\esp[Z_{n,j}(f)]\right\} \;,
%\end{align*}
%where for each $n\geq1$, $\{Z_{n,j},j=1,\ldots,m_n\}$ are \iid\ separable stochastic processes.
Assume that $(\mcg,\metricmcg)$ is totally bounded.  Assume moreover that:
  \begin{enumerate}[(i)]
  \item \label{item:lindeberg-envelope}  For all $\eta>0$,
    \begin{align*}
      %\label{eq:lindeberg-envelope}
      \lim_{n\to\infty} {\widetilde{m}_n} \esp[\|Z_{n,1}\|^2_\mcg\ind{\|Z_{n,1}\|^2_\mcg>\eta}] = 0 \; .
    \end{align*}
  \item \label{item:continuity-l2} For every sequence $\{\delta_n\}$ which decreases to zero,
    \begin{align}\label{eq:continuity-l2}
      \lim_{n\to\infty} \sup_{f,g\in\mcg\atop \rho(f,g)\leq\delta_n} \esp[d_n^2(f,g)] = 0 \; . %\label{eq:continuite-L2}
    \end{align}
  \item \label{item:random-entropy} There exists a measurable majorant $N^*(\mcg,d_n,\epsilon)$
    of the covering number $N(\mcg,d_n,\epsilon)$ such that for every sequence $\{\delta_n\}$ which
    decreases to zero,
    \begin{align}
      \int_0^{\delta_n} \sqrt{\log N^*(\mcg,d_n,\epsilon)} \rmd\epsilon\convprob 0  \; .
      \label{eq:randomentropy}
    \end{align}
  \end{enumerate}
  Then $\{\mathbb{Z}_n,n\geq 1\}$ is asymptotically $\rho$-equicontinuous, \ie\ for each $\eta>0$,
    \begin{align*}
      %\label{eq:asymptotic-equicontinuity-in-random-entropy}
      \lim_{\delta\to0} \limsup_{n\to\infty} \pr\left(\sup_{{f,g\in \mcg}\atop{\rho(f,g)<\delta}} |\mathbb{Z}_n(f)-\mathbb{Z}_n(g)| > \eta\right) = 0 \; .
    \end{align*}
\end{lemma}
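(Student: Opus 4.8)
The plan is to follow the classical argument behind the abstract central limit theorem in function space (van der Vaart and Wellner, Theorem~2.11.1), retaining only the part that delivers asymptotic equicontinuity. For $\delta>0$ write $\|\mathbb{Z}_n\|_\delta := \sup_{\rho(f,g)<\delta}|\mathbb{Z}_n(f)-\mathbb{Z}_n(g)|$. By Markov's inequality it suffices to show $\lim_{\delta\to0}\limsup_{n\to\infty}\esp^*[\|\mathbb{Z}_n\|_\delta\wedge1]=0$, since equicontinuity for small $\eta$ implies it for all $\eta$. First I would symmetrize: introducing i.i.d.\ Rademacher signs $\varepsilon_1,\dots,\varepsilon_{\widetilde m_n}$ independent of the row, the standard symmetrization inequality for sums of independent mean-zero summands bounds $\esp^*[\|\mathbb{Z}_n\|_\delta\wedge1]$, up to the factor $2$, by the corresponding quantity for the symmetrized process $f\mapsto\sum_{j=1}^{\widetilde m_n}\varepsilon_jZ_{n,j}(f)$.

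Next I would truncate at a fixed level $\tau>0$, writing $Z_{n,j}^\tau:=Z_{n,j}\ind{\|Z_{n,j}\|_\mcg\le\tau}$. The part of $Z_{n,j}$ on $\{\|Z_{n,j}\|_\mcg>\tau\}$ contributes to the symmetrized modulus at most $2\sum_j\|Z_{n,j}\|_\mcg\ind{\|Z_{n,j}\|_\mcg>\tau}$, whose expectation is $2\widetilde m_n\esp[\|Z_{n,1}\|_\mcg\ind{\|Z_{n,1}\|_\mcg>\tau}]\le(2/\tau)\,\widetilde m_n\esp[\|Z_{n,1}\|^2_\mcg\ind{\|Z_{n,1}\|^2_\mcg>\tau^2}]$, which tends to $0$ by hypothesis~\eqref{item:lindeberg-envelope} for every fixed $\tau$. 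For the $\tau$-bounded part I would work conditionally on the array $(Z_{n,j})_j$: since $Z_{n,j}^\tau$ is bounded in $\mcg$-norm by $\tau$, the process $f\mapsto\sum_j\varepsilon_jZ_{n,j}^\tau(f)$ is, given the array, sub-Gaussian for the random semimetric $d_n$ (indeed $\sum_j(Z_{n,j}^\tau(f)-Z_{n,j}^\tau(g))^2\le d_n^2(f,g)$), so Dudley's chaining bound gives
\[
\esp_\varepsilon\Big[\sup_{\rho(f,g)<\delta}\Big|\sum_j\varepsilon_j\{Z_{n,j}^\tau(f)-Z_{n,j}^\tau(g)\}\Big|\Big]\;\lesssim\;\int_0^{D_n(\delta)}\sqrt{\log N^*(\mcg,d_n,\epsilon)}\,\rmd\epsilon\;,
\]
where $D_n(\delta):=\sup_{\rho(f,g)<\delta}d_n(f,g)$.

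It then remains to show the right-hand side is negligible in probability as $n\to\infty$ and then $\delta\to0$. Hypothesis~\eqref{item:continuity-l2} yields, via a covering of $\mcg$ by $\rho$-balls of radius $\delta$, that $\limsup_n\esp[D_n^2(\delta)]\le\psi(\delta)$ with $\psi(\delta)\to0$; chaining through a $\delta$-chain between any two points of the totally bounded space $\mcg$ moreover gives $\sup_n\widetilde m_n\esp\|Z_{n,1}\|^2_\mcg<\infty$, which keeps $N^*(\mcg,d_n,\epsilon)$ controlled for $\epsilon$ bounded away from $0$. Combined with hypothesis~\eqref{item:random-entropy}, applied along an arbitrary sequence $\delta_n\downarrow0$, the chaining integral then converges to $0$ in probability. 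Assembling the three pieces — the big-part bound from \eqref{item:lindeberg-envelope}, the conditional chaining integral from \eqref{item:random-entropy}, and its truncation level $D_n(\delta)$ from \eqref{item:continuity-l2} — and letting first $n\to\infty$, then $\delta\to0$ (with $\tau$ fixed throughout), gives $\lim_{\delta\to0}\limsup_n\esp^*[\|\mathbb{Z}_n\|_\delta\wedge1]=0$, which is the assertion. The main obstacle is the bookkeeping that reconciles the fixed semimetric $\rho$ (in which closeness of $f,g$ is measured) with the random semimetric $d_n$ (in which one chains): one must show that a $\rho$-small pair is $d_n$-small with high probability, uniformly in $n$, so that the random entropy integral in \eqref{item:random-entropy} can legitimately be truncated at the vanishing level $D_n(\delta)$ — this is exactly where conditions \eqref{item:continuity-l2} and \eqref{item:random-entropy} must be played off against each other, as in the proof of van der Vaart--Wellner Theorem~2.11.1.
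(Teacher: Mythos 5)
Your outline is the standard symmetrization--truncation--chaining proof of van der Vaart--Wellner Theorem~2.11.1, which is exactly what the paper relies on: it states this lemma as an ``adapted version'' of that theorem and offers no proof of its own beyond the remark that separability supplies the missing measurability hypotheses, so your approach coincides with the (cited) proof. The only inessential slip is the side-claim that conditions (i)--(ii) together with total boundedness yield $\sup_n\widetilde{m}_n\esp[\|Z_{n,1}\|^2_{\mcg}]<\infty$ --- this does not follow from the stated hypotheses and is not needed, since finiteness of the covering numbers is implicit in (iii) and the entropy integral is only ever evaluated up to the vanishing random level $D_n(\delta_n)$.
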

\begin{remark}
   The
  separability assumption is not in \cite{vandervaart:wellner:1996}.  It implies measurability of
  $\|Z_{n,1}\|_{\mcg}$.  Furthermore, the separability also implies that for all $\delta>0$,
  $n\in\Nset$, $(e_j)_{1\leq j\leq \widetilde{m}_n}\in\{-1,0,1\}^{\widetilde{m}_n}$ and $i\in \{1,2\}$, the supremum
    \begin{align*}
      \sup_{f,g\in\mcg \atop \metricmcg(f,g) <\delta} \left|\sum_{j=1}^{\widetilde{m}_n} e_j
      \left(Z_{n,j}(f)-Z_{n,j}(g)\right)^i\right|
      & = \sup_{f,g\in\mcg_0 \atop \metricmcg(f,g) <\delta} \left|\sum_{j=1}^{\widetilde{m}_n} e_j \left(Z_{n,j}(f)-Z_{n,j}(g)\right)^i\right| \;
    \end{align*}
  is measurable, which is an assumption of  \cite{vandervaart:wellner:1996}.
  \remarkend
\end{remark}
\subsubsection{Asymptotic equicontinuity of the empirical process of sliding blocks}\label{sec:tightness-1}
Recall the big-blocks process ${\mathbb{Z}}_n(H)$ (cf. \eqref{eq:Zn-process}-\eqref{eq:Zn-process-summands}).
Recall also that thanks to $\beta$-mixing we can consider random variables $\Psi_j^{(l)}(H)$, $j=1,\ldots,\widetilde{m}_n$ to be independent.
We need to prove asymptotic equicontinuity of ${\mathbb{Z}}_n(H)$
indexed by the class $\mcg = \{H_s,s\in[s_0,t_0]\}$ equipped with the metric $\ltwotmsmetric(H,\widetilde{H})=\tailmeasurestar(\{H-\widetilde{H}\}^2)$.
The same argument can be used to prove asymptotic equicontinuity for the small blocks process. This yields asymptotic equicontinuity of
${\mathbb{F}}_n(H_{\cdot})$. We note further that asymptotic continuity of ${\mathbb{F}}_n(\exc_{\cdot})$ follows from
\cite{kulik:soulier:wintenberger:2018}.
\begin{enumerate}[$\bullet$,wide=0pt]
\item The Lindeberg condition~\ref{item:lindeberg-envelope} of~\Cref{theo:VW2.11.1}   holds because the class $\mcg$ is
  linearly ordered and by applying \eqref{eq:lindeberg}.
\item Since $\mcg$ is linearly ordered, the random entropy condition
  (\ref{eq:randomentropy}) of \Cref{theo:VW2.11.1} holds.

\item Define the random metric
  \begin{align*}
    %\label{eq:random-metric-clusters}
    d_n^2(H,\widetilde{H}) =
    \sum_{j=1}^{\widetilde m_n} (Z_{n,j}(H)-Z_{n,j}(\widetilde{H}))^2 \; .
  \end{align*}
  We need to evaluate
  $\esp[d_n^2(H_s,H_t)]$:
  \begin{align}\label{eq:tightness-evaluation-1}
&\esp[d_n^2(H_s,H_t)]\notag\\
&
=\frac{\statinterseqn\widetilde{m}_n}{(q_n \dhinterseq \pr(\norm{\bsX_0}>\tepseq))^2}
\esp\left[\left(\sum_{i=0}^{z_n\dhinterseq-1}\left\{H_s\left(\bsX_{i+1,i+\dhinterseq}/\tepseq\right)-
H_t\left(\bsX_{i+1,i+\dhinterseq}/\tepseq\right)\right\}\right)^2\right]
\notag\\
&\sim \frac{1}{ z_n\dhinterseq^3 \pr(\norm{\bsX_0}>\tepseq)}
\esp\left[\left(\sum_{j=1}^{z_n}\left\{\Psi_j(H_s)-\Psi_j(H_t)\right\}\right)^2\right]
\;,
\end{align}
where in the last line we decomposed the block $L_1$ into $z_n$ disjoint blocks $J_1,\ldots,J_{z_n}$, used the notation \eqref{eq:block-of-size-rn}, the asymptotics \eqref{eq:ratio} and $\widetilde{m}_n\sim m_n/z_n$; cf. \eqref{eq:variance-large-blocks}.

The term in \eqref{eq:tightness-evaluation-1} becomes
\begin{align*}
&\frac{\esp[(\Psi_1(H_s)-\Psi_1(H_t))^2]}{\dhinterseq^3\pr(\norm{\bsX_0}>\tepseq)}\\
&+2\frac{1}{\dhinterseq^3 \pr(\norm{\bsX_0}>\tepseq)}\sum_{j=1}^{z_n-1}\left(1-\frac{j}{z_n}\right)
\esp[\left\{\Psi_1(H_s)-\Psi_1(H_t)\right\}\left\{\Psi_{1+j}(H_s)-\Psi_{1+j}(H_t)\right\}]
\end{align*}
and as in \eqref{eq:ourdecomp} we can write it as
\begin{align}\label{eq:tightness}
\frac{1}{\dhinterseq^3\pr(\norm{\bsX_0}>\tepseq)}
\esp\left[\left\{\Psi_2(H_s)-\Psi_2(H_t)\right\}\sum_{j=1}^2\left\{\Psi_j(H_s)-\Psi_j(H_t)\right\}\right\}+
\widetilde{A}_n(H,s,t)
\end{align}
with the reminder (cf. \eqref{eq:reminder-big-blocks})
\begin{align*}
&\widetilde{A}_n(H,s,t):=-2\frac{1}{z_n}\frac{1}{\dhinterseq^3 \pr(\norm{\bsX_0}>\tepseq)}
\esp[\left\{\Psi_1(H_s)-\Psi_1(H_t)\right\}\left\{\Psi_2(H_s)-\Psi_2(H_t)\right\}] \\
&+2 \frac{1}{\dhinterseq^3 \pr(\norm{\bsX_0}>\tepseq)} \sum_{j=2}^{z_{n}-1}\left(1-\frac{j}{z_{n}}\right)
\esp\left[\left\{\Psi_1(H_s)-\Psi_1(H_t)\right\} \left\{\Psi_{j+1}(H_s)-\Psi_{j+1}(H_t)\right\}\right]\\
&= \widetilde{A}_{n,1}(H_s-H_t)+\widetilde{B}_n(H_s-H_t)\;.
\end{align*}
\Cref{rem:A-LinA} applies and hence by \Cref{lem:term-Bn},
\begin{align*}
%\label{eq:term-BnH-tilde-st}
\lim_{n\to\infty}\sup_{s\in [s_0,t_0]}\widetilde{B}_n(H_s-H_t)=0\;.
\end{align*}
The leading term in \eqref{eq:tightness} is decomposed as (cf. \eqref{eq:variance-decomposition-leading-term})
\begin{align*}
\frac{1}{\dhinterseq} \widetilde{g}_{n}(0;H_s-H_t)+
2\frac{1}{\dhinterseq}
\sum_{i=1}^{\dhinterseq-1}\widetilde{g}_{n}(i/\dhinterseq;H_s-H_t)+
2R_n(H_s-H_t)\;.
\end{align*}
Again, \Cref{rem:A-LinA} applies and \eqref{eq:reminder-2-mcg} gives
\begin{align*}
%\label{eq:term-RnH-tilde-st}
\lim_{n\to\infty}\sup_{s\in [s_0,t_0]}R_n(H_s-H_t)=0\;.
\end{align*}
It remains to show that for every
  sequence $\{\delta_n\}$ decreasing to zero,
  \begin{align*}
    \lim_{n\to\infty} \sup_{s,t\in[s_0,t_0] \atop |s-t|\leq \delta_n}
    \frac{1}{\dhinterseq}
\sum_{i=1}^{\dhinterseq-1}\widetilde{g}_{n}(i/\dhinterseq;H_s-H_t)=
 \lim_{n\to\infty} \sup_{s,t\in[s_0,t_0] \atop |s-t|\leq \delta_n}\int_0^1\widetilde{g}_n(\xi,H_s-H_t)\rmd \xi
=
0\;.
  \end{align*}
Because of the monotonicity
\begin{align*}
|\widetilde{g}_n(\xi,H_s-H_t)|\leq 2\sup_{s\in [s_0,t_0]}|H_s| \tailmeasurestar_{n,\dhinterseq}(|H_s-H_t|)
\leq 2\max\{|H_{s_0}|,|H_{t_0}|\}|\tailmeasurestar_{n,\dhinterseq}(H_s)-\tailmeasurestar_{n,\dhinterseq}(H_t)|\;.
\end{align*}
The
  convergence of $\tailmeasurestar_{n,\dhinterseq}(H_s)$ to
  $s^{-\alpha}\tailmeasurestar(H^2)$ is uniform on $[s_0,t_0]$. Thus,
  for $s,t\in[s_0,t_0]$,
  \begin{multline*}
    \left| \tailmeasurestar_{n,\dhinterseq}(H_s)  - \tailmeasurestar_{n,\dhinterseq}(H_{t}) \right|
    \leq 2 \sup_{s_0\leq u \leq t_0} \left| \tailmeasurestar_{n,\dhinterseq}(H_u) -
      \tailmeasurestar(H_{u}) \right| + \tailmeasurestar(H) \{s^{-\alpha} - t^{-\alpha}\} \; .
  \end{multline*}
  Fix $\eta>0$. For large enough $n$, the uniform convergence yields
  \begin{multline*}
    \sup_{s_0\leq s, t \leq t_0 \atop |s-t|\leq\delta_n} \left|
      \tailmeasurestar_{n,\dhinterseq}(H_s) - \tailmeasurestar_{n,\dhinterseq}(H_{t}) \right|
    \leq \eta + \tailmeasurestar(H) \sup_{s_0\leq s, t \leq t_0 \atop |s-t|\leq\delta_n}  \{s^{-\alpha} - t^{-\alpha}\} \\
    \leq \eta + \alpha s_0^{-\alpha-1} \delta_n \tailmeasurestar(H)\; .
  \end{multline*}
  This proves that \eqref{eq:continuity-l2} holds.
\end{enumerate}
The conditions of \Cref{theo:VW2.11.1} hold, thus the sequence ${\mathbb{Z}_n}$ is asymptotically
equicontinuous.

\subsection{Proof of \Cref{thm:sliding-block-clt-1}}\label{sec:proof-conclusion}
Write $\zeta_{n}=\orderstat[\norm{\bsX}]{n}{n-\statinterseqn}/\tepseq$. Since
  $\statinterseqn=n\pr(\norm{\bsX_0}>\tepseq)$, we have the relationship $\TEDclusterrandomsl_{n,\dhinterseq}(H)=\tedclustersl(H_{\zeta_n})$ (cf. \eqref{eq:sliding-block-estimator-nonfeasible-1}-\eqref{eq:sliding-block-estimator-feasible}). Therefore,
  \begin{align}
    \label{eq:tep-cluster-decomp}
    &\sqrt{\statinterseqn}\left\{\TEDclusterrandomsl_{n,\dhinterseq}(H)-\tailmeasurestar(H)\right\}
      = \mathbb{F}_n(H_{\zeta_n})   + \sqrt{\statinterseqn} \left\{\tailmeasurestar(H_{\zeta_n}) - \tailmeasurestar(H)\right\}    \; .
  \end{align}
  \begin{enumerate}[Step 1.,wide=0pt]
  \item \Cref{thm:sliding-blocks-process-clt} gives local uniform convergence of $\{\mathbb{F}_n(H_{s}), s\in [s_0,t_0]\}$ to a continuous Gaussian process $\mathbb{G}$. At the same time, convergence of
      $\{\mathbb{F}_n(\exc_{s}), s\in [s_0,t_0]\}$ yields $\zeta_n\convprob1$, jointly with $\mathbb{F}_n(H_{s})$. Therefore, $\mathbb{F}_n(H_{\zeta_n})\convdistr \mathbb{G}(H)$.
  \item Using Vervaat's theorem, we have, jointly with the previous
    convergence,
      $\sqrt{k} (\zeta_n^{-\alpha}-1)  \convdistr \tcb{-}\  \TEPclusterlimit(\exc)$.
    Therefore, by the homogeneity of $\tailmeasurestar$,
    \begin{align*}
      \sqrt{k} \left\{\tailmeasurestar(H_{\zeta_n}) - \tailmeasurestar(H)\right\}
      =   \tailmeasurestar(H)      \sqrt{k} (\zeta_n^{-\alpha}-1)  \convdistr - \tailmeasurestar(H) \TEPclusterlimit(\exc) \; .
    \end{align*}
  \end{enumerate}
  Since the convergences hold jointly, we conclude the result.

\subsection{Auxiliary results}
\begin{lemma}[Problems 5.24 and 5.25 in \cite{kulik:soulier:2020}]
  \label{exer:expression-tailmeasurestar-HH'}
  Assume that $\pr(\lim_{|j|\to\infty} \norm{\bsY_j}=0)=1$ and let $H$, $H'$ be bounded functionals
   on $(\Rset^d)^\Zset$ such that $H'(\bsx)=0$ if $\bsx^*\leq 1$  and
  $\esp[|H(\bsY)| |H'(\bsY_{0,\infty})-H'(\bsY_{1,\infty})|] <\infty$. Then
  \begin{align*}
     &\tailmeasurestar(HH') = \esp[H(\bsY) \{H'(\bsY_{0,\infty})-H'(\bsY_{1,\infty})\}] \;,\\
     &\tailmeasurestar(H\exc)=\esp[H(\bsY)]\;, \ \
    \tailmeasurestar(\exc)=1\;, \ \
    \tailmeasurestar(\exc^2) = \sum_{j\in\Zset} \pr(\norm{\bsY_{j}}>1)\;.
   \end{align*}
  \end{lemma}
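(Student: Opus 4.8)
The plan is to reduce all four statements, through \eqref{eq:relation-cluster-Y-Q-Theta-epsilon}, to evaluating $\esp[G(\bsY)\ind{\anchor_0(\bsY)=0}]$ for a suitable $G$, and then to compute these expectations by combining the polar decomposition $\bsY=R\bsTheta$ of the tail process (with $R=\norm{\bsY_0}$ Pareto$(\alpha)$ on $[1,\infty)$, independent of the spectral tail process $\bsTheta$, $\norm{\bsTheta_0}=1$), the shift-invariance of $H$, and the time-change formula for $\bsTheta$ (\cite{basrak:segers:2009}, \cite[Chapter 5]{kulik:soulier:2020}). Under the standing hypothesis $\bsY\in\lzero(\Rset^d)$ a.s.\ the running maxima $\bsY^*_{-\infty,j}$ reach $\bsY^*$ at a finite, well-defined first index, so $\anchor_0(\bsY)=\anchor_0(\bsTheta)\in\Zset$ a.s.\ and $\sum_{j\in\Zset}\ind{\anchor_0(\bsY)=-j}=1$ a.s. Each of $\exc$, $\exc^2$, $H\exc$, $HH'$ vanishes on $\{\bsx^*\le1\}$ (the first three because $\exc$ does, the last because $H'$ does), so \eqref{eq:relation-cluster-Y-Q-Theta-epsilon} with $\epsilon=1$ gives $\tailmeasurestar(G)=\esp[G(\bsY)\ind{\anchor_0(\bsY)=0}]$ in every case.

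The three $\exc$-identities rest on the reindexing step
\begin{align*}
\esp[G((\bsY_{j+k})_{k\in\Zset})\,\ind{\norm{\bsY_j}>1}\,\ind{\anchor_0(\bsY)=0}] = \esp[G(\bsY)\,\ind{\anchor_0(\bsY)=-j}] \;, \qquad j\in\Zset \;,
\end{align*}
valid for any bounded $G$. To prove it, condition on $\bsTheta$, integrate out $R$, and substitute $r\mapsto r/\norm{\bsTheta_j}$: on $\{\anchor_0(\bsTheta)=0\}$ one has $\norm{\bsTheta_j}\le\norm{\bsTheta_0}=1$, so $\ind{\norm{\bsY_j}>1}=\ind{R>1/\norm{\bsTheta_j}}$ cuts the Pareto integral precisely at its lower endpoint, producing a factor $\norm{\bsTheta_j}^\alpha$ and the renormalised shifted sequence $(\bsTheta_{j+k}/\norm{\bsTheta_j})_k$, whose anchor is $\anchor_0(\bsTheta)-j$; the time-change formula for $\bsTheta$ then converts $\norm{\bsTheta_j}^\alpha$ into $\ind{\norm{\bsTheta_{-j}}>0}$ (redundant on $\{\anchor_0(\bsTheta)=-j\}$, where the maximum sits at $-j$) and restores $\bsY$. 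For shift-invariant $G$ one now expands $\exc(\bsY)=\sum_j\ind{\norm{\bsY_j}>1}$ and writes $G(\bsY)=G((\bsY_{j+k})_k)$, obtaining $\tailmeasurestar(G\exc)=\sum_j\esp[G(\bsY)\ind{\anchor_0(\bsY)=-j}]=\esp[G(\bsY)]$; taking $G\equiv1$ yields $\tailmeasurestar(\exc)=1$, $G=H$ yields $\tailmeasurestar(H\exc)=\esp[H(\bsY)]$, and $G=\exc$ (Tonelli, all terms non-negative) yields $\tailmeasurestar(\exc^2)=\esp[\exc(\bsY)]=\sum_{j\in\Zset}\pr(\norm{\bsY_j}>1)$, finite by condition \ref{eq:conditionS} assumed in \Cref{thm:sliding-block-clt-1}.

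For $\tailmeasurestar(HH')=\esp[H(\bsY)H'(\bsY)\ind{\anchor_0(\bsY)=0}]$ one runs the same scheme with $H'$ in the role of $\exc$: telescope $H'(\bsY)=\sum_{j\in\Zset}\{H'(\bsY_{j,\infty})-H'(\bsY_{j+1,\infty})\}$ a.s.\ (the partial sums collapse to $H'(\bsY_{M,\infty})-H'(\bsY_{N+1,\infty})$, and $\bsY\in\lzero(\Rset^d)$ forces $\bsY_{M,\infty}\to\bsY$, $\bsY_{N+1,\infty}\to\bszero$ uniformly as $M\to-\infty,N\to\infty$, with $H'(\bszero)=0$; for $H'\in\mcb$ one replaces uniform continuity of $H'$ by the $\ell_1$-Lipschitz structure of the underlying $K$ as in \Cref{theo:limit-lone-Q}, the integrability hypothesis ensuring the series is absolutely convergent), then uses shift-invariance of $H$ together with $H'(\bsY_{j,\infty})=H'((\bsY_{j+k})_{k\ge0})$ to recast the $j$-th term over the shifted sequence $(\bsY_{j+k})_k$, on which the time-change formula returns $\esp[H(\bsY)\{H'(\bsY_{0,\infty})-H'(\bsY_{1,\infty})\}\ind{\anchor_0(\bsY)=-j}]$; summing over $j$ and using $\sum_j\ind{\anchor_0(\bsY)=-j}=1$ finishes. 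The main obstacle is exactly this last step — putting the time-change formula for the non-normalised tail process into the form that reconciles the $\anchor_0$-anchoring with the forward increments $H'(\bsY_{0,\infty})-H'(\bsY_{1,\infty})$, and controlling the telescoped series term by term; shift-invariance of $H$ (implicit here, since in our applications $\tailmeasurestar$ is only ever applied to shift-invariant products) is what makes the recasting work, and when $H'=\exc$ — the case used in \Cref{rem:limiting-variance} — the argument collapses to the $\exc^2$ computation above.
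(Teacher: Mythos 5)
Your proof is correct and follows essentially the same route as the paper's: reduce via \eqref{eq:relation-cluster-Y-Q-Theta-epsilon} to an anchored expectation, telescope $H'$ (resp.\ expand $\exc$ into single exceedance indicators), shift each term with the time-change formula, and collapse the sum because exactly one anchoring indicator fires. The only differences are cosmetic — you anchor on $\{\anchor_0(\bsY)=0\}$ and telescope two-sidedly where the paper anchors on $\{\bsY_{-\infty,-1}^*\leq 1\}$ and telescopes over $j\geq 0$, and you re-derive the time-change identity from the polar decomposition $\bsY=R\bsTheta$ rather than citing it.
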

   \begin{proof}
     Applying \eqref{eq:relation-cluster-Y-Q-Theta-epsilon} and the time change formula (see \cite[Lemma 2.2]{planinic:soulier:2018}), we obtain
     \begin{align*}
       \tailmeasurestar(HH') &= \esp[H(\bsY)H'(\bsY) \ind{\bsY_{-\infty,-1}^*\leq1}] = \esp[|H(\bsY)| |H'(\bsY_{0,\infty})| \ind{\bsY_{-\infty,-1}^*\leq1}]    \\
       & \leq  \sum_{j=0}^\infty  \esp[|H(\bsY)| |H'(\bsY_{j,\infty})-H'(\bsY_{j+1,\infty})|
         \ind{\norm{\bsY_j}>1} \ind{\bsY_{-\infty,-1}^*\leq1}]    \\
       & = \sum_{j=0}^\infty  \esp[H(\bsY) |H'(\bsY_{0,\infty})-H'(\bsY_{1,\infty})| \ind{\norm{\bsY_{-j}}>1} \ind{\bsY_{-\infty,-j-1}^*\leq1}]    \\
       & = \esp[H(\bsY) |H'(\bsY_{0,\infty})-H'(\bsY_{1,\infty})|] <\infty \; .
     \end{align*}
     This proves that $\tailmeasurestar(HH')<\infty$. Hence, we can switch the expectation with the summation and the first result follows. The second statement follows by noting that $\exc(\bsY_{0,\infty}) - \exc(\bsY_{1,\infty}) = 1$ almost surely.
   \end{proof}
\begin{lemma}[Example 6.2.2 and Problem 6.7 in \cite{kulik:soulier:2020}]
\label{lem:cluster-tail}
Assume that $\pr(\lim_{|j|\to\infty} \norm{\bsY_j}=0)=1$ and let $\pi(m)$, $m\geq 0$, be the limiting cluster size distribution. Then
\begin{align*}
\sum_{m=1}^\infty m\pi(m) &= \canditheta^{-1}\;, \ \ \sum_{m=1}^\infty m^2\pi(m)  = \canditheta^{-1} \sum_{j\in\Zset}\pr(\norm{\bsY_j}>1)\;.
\end{align*}
\end{lemma}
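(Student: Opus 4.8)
The plan is to recognize both claimed formulas as moment identities for the cluster measure $\tailmeasurestar$ applied to the excess functional $\exc$ and its square. More precisely, I will show that $\sum_{m\ge1}m^k\pi(m)=\canditheta^{-1}\tailmeasurestar(\exc^k)$ for $k=1,2$, and then simply read off $\tailmeasurestar(\exc)=1$ and $\tailmeasurestar(\exc^2)=\sum_{j\in\Zset}\pr(\norm{\bsY_j}>1)$ from \Cref{exer:expression-tailmeasurestar-HH'}.

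The first step is to prove that $\tailmeasurestar(\ind{\exc=m})=\canditheta\,\pi(m)$ for every $m\ge1$. For $m\ge1$ the functional $\ind{\exc=m}$ (this is $H_2$ of \eqref{eq:h2}) vanishes on $\{\bsx^*\le1\}$, so it belongs to $\mca$, and \Cref{theo:cluster-RV} gives
$$
\tailmeasurestar(\ind{\exc=m})=\lim_{n\to\infty}\frac{\pr\big(\sum_{j=1}^{\dhinterseq}\ind{\norm{\bsX_j}>\tepseq}=m\big)}{\dhinterseq\pr(\norm{\bsX_0}>\tepseq)}\;.
$$
Since $m\ge1$, the event $\{\sum_{j=1}^{\dhinterseq}\ind{\norm{\bsX_j}>\tepseq}=m\}$ is contained in $\{\bsX_{1,\dhinterseq}^*>\tepseq\}$, so the numerator factors as $\pr(\bsX_{1,\dhinterseq}^*>\tepseq)\,\pr(\sum_{j=1}^{\dhinterseq}\ind{\norm{\bsX_j}>\tepseq}=m\mid \bsX_{1,\dhinterseq}^*>\tepseq)$. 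Using the consequence of \Cref{theo:cluster-RV} that $\pr(\bsX_{1,\dhinterseq}^*>\tepseq)/(\dhinterseq\pr(\norm{\bsX_0}>\tepseq))\to\canditheta$, together with the conditional limit computed in \Cref{xmpl:clustersizedistribution}, one gets $\tailmeasurestar(\ind{\exc=m})=\canditheta\,\pi(m)$. (Equivalently, this follows from \eqref{eq:relation-cluster-Y-Q-Theta-epsilon} applied with $\epsilon=1$ and the fact that $\tailmeasurestar$ does not depend on the choice of anchoring map, cf. \cite[Theorem 5.4.2]{kulik:soulier:2020}.)

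For the second step, recall from \Cref{def:clustermeasure} that $\tailmeasurestar$ is a measure on $\lzero(\Rset^d)$, so $\exc$ is everywhere finite on its domain and the pointwise identities $\exc=\sum_{m\ge1}m\ind{\exc=m}$ and $\exc^2=\sum_{m\ge1}m^2\ind{\exc=m}$ hold (the terms $m=0$ drop out since $\exc\cdot\ind{\exc=0}\equiv0$). Applying Tonelli's theorem to these nonnegative series and Step~1 yields $\tailmeasurestar(\exc)=\canditheta\sum_{m\ge1}m\,\pi(m)$ and $\tailmeasurestar(\exc^2)=\canditheta\sum_{m\ge1}m^2\,\pi(m)$. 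Dividing by $\canditheta$ and substituting $\tailmeasurestar(\exc)=1$ and $\tailmeasurestar(\exc^2)=\sum_{j\in\Zset}\pr(\norm{\bsY_j}>1)$ from \Cref{exer:expression-tailmeasurestar-HH'} gives exactly the two asserted formulas; the second is understood as an identity in $[0,\infty]$, both sides being finite precisely when $\sum_{j\in\Zset}\pr(\norm{\bsY_j}>1)<\infty$.

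I do not expect a genuine obstacle here. The only points requiring a little care are the bookkeeping in Step~1 — correctly isolating the conditioning event and relying on anchor-independence of $\tailmeasurestar$ — and the (immediate) justification that $\exc$ is finite on $\lzero(\Rset^d)$ so that the series representations of $\exc$ and $\exc^2$ may be integrated term by term; both are routine given the results recalled above.
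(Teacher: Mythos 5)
Your proof is correct and follows essentially the same route as the paper: both arguments reduce $\sum_m m^k\pi(m)$ to $\canditheta^{-1}\tailmeasurestar(\exc^k)$ and then read off $\tailmeasurestar(\exc)=1$ and $\tailmeasurestar(\exc^2)=\sum_{j\in\Zset}\pr(\norm{\bsY_j}>1)$ from \Cref{exer:expression-tailmeasurestar-HH'}. The only cosmetic difference is that your primary route to $\tailmeasurestar(\ind{\exc=m})=\canditheta\,\pi(m)$ detours through the asymptotic \Cref{theo:cluster-RV} (which would import \ref{eq:conditiondh}, not assumed in the lemma), whereas the direct identity via \eqref{eq:relation-cluster-Y-Q-Theta-epsilon} with $\epsilon=1$ — which you note parenthetically — is what the paper uses and is all that is needed.
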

\begin{proof}
For the first statement, applying
  \eqref{eq:relation-cluster-Y-Q-Theta-epsilon} and \Cref{exer:expression-tailmeasurestar-HH'}, we have,
  \begin{align*}
    \sum_{m=1}^\infty m\pi(m)
    & = \sum_{m=1}^\infty m \pr(\exc(\bsY) = m\mid \anchor_0(\bsY)=0)
      = \esp[\exc(\bsY) \mid \anchor_0(\bsY)=0]  = \canditheta^{-1} \tailmeasurestar(\exc)  = \canditheta^{-1} \; .
  \end{align*}
Likewise,
\begin{align*}
  \sum_{m=1}^\infty m^2\pi(m)
  &  = \esp[\exc^2(\bsY) \mid \anchor_0(\bsY)=0]  = \canditheta^{-1} \tailmeasurestar(\exc^2) =
    \canditheta^{-1} \sum_{j\in\Zset}\pr(\norm{\bsY_j}>1)\;.
  \end{align*}
\end{proof}

\end{document}